\newtheorem{theorem}{Theorem}[section]
\newtheorem{proposition}[theorem]{Proposition}
\newtheorem{lemma}[theorem]{Lemma}
\newtheorem{corollary}[theorem]{Corollary}
\newtheorem{question}[theorem]{Question}
\theoremstyle{definition}
\newtheorem{example}[theorem]{Example}
\newtheorem{definition}[theorem]{Definition}
\newtheorem*{remark*}{Remark}
\newtheorem*{acknowledgements}{Acknowledgements}
\newtheorem{remark}[theorem]{Remark}
\numberwithin{equation}{section}
\newcommand{\cA}{\mathcal{A}}
\newcommand{\cH}{\mathcal{H}}
\newcommand{\R}{\mathbb{R}}
\newcommand{\N}{\mathbb{N}}
\newcommand{\eps}{\varepsilon}
\newcommand{\cCEE}{\mathcal{C}_{E_+,E_-}}
\newcommand{\Ric}{\mathrm{Ric}}
\newcommand{\interior}{\mathrm{Int}}
\newcommand{\intrel}{\mathrm{int}_{\mathrm{rel}}}
\newcommand{\Span}{\mathrm{Span}}
\newcommand{\Loc}{\mathrm{Loc}}
\title[Min-max construction of PMC hypersurfaces in noncompact manifolds]{Min-max construction of prescribed mean curvature hypersurfaces in noncompact manifolds}
\author{Douglas Stryker}
\begin{document}

\begin{abstract}
    We develop a min-max theory for hypersurfaces of prescribed mean curvature in noncompact manifolds, applicable to prescription functions that do not change sign outside a compact set. We use this theory to prove new existence results for closed prescribed mean curvature hypersurfaces in Euclidean space and complete finite area constant mean curvature hypersurfaces in finite volume manifolds.
\end{abstract}

\maketitle

%\tableofcontents

\section{Introduction}
Given a smooth function $h$ on a Riemannian manifold $(M, g)$, a hypersurface $\Sigma \subset M$ has \emph{prescribed mean curvature} $h$ (abbreviated $h$-PMC) if the mean curvature of $\Sigma$ satisfies $H_{\Sigma} = h\mid_{\Sigma}$. The existence question for closed PMC hypersurfaces is a fundamental problem in geometric analysis. Moreover, PMC hypersurfaces are useful tools in Riemannian geometry; for instance, $\mu$-bubbles (which are special PMC hypersurfaces introduced by \cite{mu-bubbles}) have been instrumental in recent breakthroughs in the study of positive scalar curvature (for example, see \cite{Gromovaspherical,CLL,CLaspherical}).

A fruitful approach to the existence problem relies on the observation that $h$-PMC hypersurfaces arise as the boundaries of critical points of the functional
\[ \cA^h(\Omega) = \mathrm{Area}(\partial \Omega) - \int_{\Omega} h \]
for finite perimeter sets $\Omega$. Hence, $h$-PMC hypersurfaces can be constructed using minimization techniques (see for instance \cite{mu-bubbles,morgan,zhu:mububbles}) and min-max techniques (see for instance \cite{zhou-zhu_CMC,zhou-zhu_PMC, zhou_mult}) for $\cA^h$. These minimization and min-max results for $\cA^h$ build on the original minimization and min-max programs for the area functional; we refer the reader to \cite{FedFlem, Flem, DG, Alm66, simons} and \cite{almgren1962homotopy,pitts,MN_morse,MN_morse_mult} for some particularly relevant results in these directions.

An essential difficulty arises for these variational techniques in noncompact ambient spaces, since critical points are constructed as subsequential limits of minimizing or min-max sequences, which may escape to infinity in a noncompact manifold. Despite these difficulties, there are many important problems related to the existence of PMC hypersurfaces in noncompact manifolds.

A first example is a well-known question from \cite[Problem 59]{yauQuestions}: for which functions $h : \R^3 \to \R$ does there exist an embedded $h$-PMC surface with prescribed genus? For some progress towards this problem in the case of genus zero, we refer the reader to \cite{BK, TW, yauremark}. It is natural to consider the following modification of Yau's problem, where we remove the restriction that the surface has prescribed genus and weaken the restriction of embedding to almost embedding\footnote{A codimension 1 immersion is an almost embedding if it can be decomposed locally near any point into a union of embedded sheets that may touch each other but do not cross through each other.}.

\begin{question}\label{quest:PMCrn}
    For which locally smooth functions $h : \R^{n+1} \to \R$ does there exist a closed almost embedded $h$-PMC hypersurface?
\end{question}

A reason to remove the topological restriction from the variational point of view is the fact that minimization and min-max for $\cA^h$ over surfaces of a fixed topological type does not produce smooth solutions in general (see \cite{SS} and \cite{4spheres}). Moreover, it seems difficult or impossible to rule out hypersurfaces that are almost embedded but not embedded in general (see \cite{zhou-zhu_CMC, zhou-zhu_PMC}), even though embeddedness can be guaranteed in some special situations (see \cite{white_transverse, zhou_mult, Bel-Work}). Progress towards Question \ref{quest:PMCrn} using min-max methods was obtained by \cite{liam}.

A second example is a natural synthesis of the main theorem of \cite{CL}---every complete finite volume manifold contains a complete finite area embedded minimal hypersurface---and the main theorem of \cite{zhou-zhu_CMC}---every closed manifold contains a closed almost embedded hypersurface of constant mean curvature $c$ (abbreviated $c$-CMC) for all $c > 0$.

\begin{question}\label{quest:CMCfv}
    Does every complete finite volume manifold contain a complete finite area almost embedded $c$-CMC hypersurface for every $c > 0$?
\end{question}

We refer the reader to \cite{thorbergsson, bangert, song_dich, CL, localization, dey} for results in the case $c=0$. The case of nonzero CMC hypersurfaces appears to be mostly unexplored. Even a positive answer for an inexplicit range of small values of $c > 0$ via an implicit function theorem argument (using as a starting point the existence of a finite area embedded minimal hypersurface) seems difficult, since such hypersurfaces may be noncompact in general.

\subsection{Main results}

In this paper, we develop a localized min-max theory for complete finite area PMC hypersurfaces in complete noncompact manifolds. The following statement is a simplified version of our main result, where we postpone the precise min-max definitions to \S\ref{subsec:min-max_setup} (see Theorem \ref{thm:localized_noncompact} for the most general result). For clarity, when we say there is a homotopy class of sweepouts with nontrivial width for $\cA^h$, we mean that we are in a situation where Morse theory (or mountain pass) heuristics suggest the existence of a critical point of the functional.

\begin{theorem}\label{thm:main}
    Let $(M, g)$ be a complete manifold of dimension $3 \leq n+1 \leq 7$. Let $h \in C^{\infty}_{\text{loc}}(M)$ be nonpositive outside a compact set. If there is a homotopy class of sweepouts of bounded finite perimeter sets with nontrivial width for $\cA^h$, then there is a smooth, complete, finite area, almost embedded $h$-PMC hypersurface in $M$.
\end{theorem}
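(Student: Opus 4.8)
The plan is to construct the $h$-PMC hypersurface as a subsequential limit of min-max critical points for $\cA^h$ on an exhaustion of $M$ by compact domains, using the sign condition on $h$ to prevent the relevant sets from escaping to infinity. Let me think about how this would go.

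First I would set up an exhaustion $M = \bigcup_k U_k$ by precompact open sets with smooth boundary, arranged so that $h \le 0$ on $M \setminus U_1$ (using that $h$ is nonpositive outside a compact set). On each $U_k$ one has a compact manifold with boundary, and one can run the Zhou–Zhu type min-max machinery for $\cA^h$ relative to $U_k$: the homotopy class of sweepouts of bounded finite perimeter sets restricts to a homotopy class on $U_k$ (after intersecting with $U_k$, or by noting the sweepout sets are eventually contained in $U_k$ for $k$ large), and one needs to check the width $\omega_k$ relative to $U_k$ is positive and bounded above independently of $k$ — the upper bound coming from a fixed sweepout in the original class, and positivity from the nontriviality hypothesis (this requires some care since restricting to $U_k$ could in principle kill the width, so one argues that for $k$ large the obstruction persists). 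This produces, for each large $k$, a smooth almost embedded $h$-PMC hypersurface $\Sigma_k \subset U_k$ (away from $\partial U_k$) with $\mathrm{Area}(\Sigma_k) \le C$ uniformly, obtained as the boundary of a finite perimeter set $\Omega_k$.

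The key step is the \emph{no-escape / compactness} argument. I would use the structure of the functional together with $h \le 0$ near infinity: on the region where $h \le 0$, the set $\Omega_k$ behaves like an area-minimizing-type object (more precisely, a stable $h$-PMC boundary with $h \le 0$ satisfies a monotonicity-type or curvature estimate), so one gets local area bounds and, via the maximum principle / a calibration argument comparing $\cA^h(\Omega_k)$ with $\cA^h(\Omega_k \cap U_j)$ for fixed $j$, one shows the part of $\Sigma_k$ outside a fixed compact set has controlled area and does not accumulate mass as $k \to \infty$. Combined with the dimension restriction $n+1 \le 7$ (so no singular set) and the uniform area and curvature bounds from the stability/almost minimizing property, one extracts a subsequence converging (in the varifold sense, and smoothly with multiplicity on compact subsets) to a limit varifold $V$ which is a smooth almost embedded $h$-PMC hypersurface on all of $M$; the uniform area bound gives finite total area, and a diagonal argument gives completeness (the limit cannot have "boundary" in the interior of $M$). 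One also needs to rule out the degenerate possibility that $V = 0$ or that $V$ converges to something supported at infinity — this is where the lower width bound and the sign condition are used together: the $h \le 0$ region cannot absorb all the mass because a closed $h$-PMC hypersurface with $h\le 0$ enclosing a region would have to "collapse," contradicting the positive width lower bound transmitted from the ambient homotopy class.

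I expect the main obstacle to be precisely this compactness/no-escape step: showing that the min-max hypersurfaces $\Sigma_k$ do not drift off to infinity and that the limit is both nonempty and genuinely $h$-PMC (not, say, a minimal hypersurface arising as a degenerate limit where the $\Omega_k$ lose their enclosed region, nor a hypersurface with some mass lost to infinity). This requires a quantitative use of the hypothesis $h \le 0$ outside a compact set — likely through a barrier argument showing $\Omega_k$ cannot penetrate too deeply into the region at infinity (one can foliate a neighborhood of infinity by hypersurfaces of mean curvature $\ge 0$ pointing inward, which act as barriers for $h$-PMC boundaries when $h \le 0$), together with the uniform area bound to control what happens in the transition region. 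Secondary technical points — making the relative min-max theory on manifolds with boundary rigorous (interior regularity up to but not on $\partial U_k$), checking the width bounds are uniform, and the varifold regularity theory for almost minimizing $h$-PMC varifolds in the noncompact limit — I would handle by citing and adapting \cite{zhou-zhu_PMC, zhou-zhu_CMC} and the localized minimal surface techniques of \cite{CL, localization}, but the genuinely new difficulty is the interaction between the min-max width and the sign condition at infinity.
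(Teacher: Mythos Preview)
Your overall framework---exhaust by compact domains $N_k$, run min-max on each, extract a limit with uniform area and index bounds---matches the paper's scaffold. The genuine gap is in your no-escape step. You propose to prevent $\Sigma_k$ from drifting to infinity via (i) a calibration comparison $\cA^h(\Omega_k)$ versus $\cA^h(\Omega_k\cap U_j)$ and (ii) a foliation of a neighborhood of infinity by inward-mean-convex hypersurfaces acting as barriers when $h\le 0$. Neither is available in a general complete manifold: there is no reason an arbitrary $(M,g)$ admits such a foliation near infinity, and even when it does, the barrier only blocks $h$-PMC hypersurfaces of the \emph{correct orientation}. Since $\Sigma_k=\partial\Omega_k$ has $H_{\Sigma_k,\Omega_k}=h\le 0$ in the end region, $\Sigma_k$ is mean-convex with respect to the \emph{inward} normal of $\Omega_k$, and a mean-convex barrier pointing the same way does not exclude touching. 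This one-sided orientation issue is exactly the subtlety the paper isolates (see the discussion around Theorem~\ref{thm:main_compact}); your sketch does not address it.

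The paper's mechanism is different and does not attempt to confine $\Sigma_k$ a priori. On each fixed $N_i$ (with a generic ``good pair'' perturbation), one runs min-max and obtains $\Sigma=\partial\Omega$. If $\Sigma$ misses the compact localizer $\Loc(\Phi_0,h)$, one \emph{cuts} $N_i$ along perturbations of $\partial\Omega$ to produce a strictly smaller domain $N'\subset N_i$ still containing the localizer, whose new boundary is again a barrier for $(h,P_+',P_-')$. The point where the sign condition enters is precisely here: when $\Omega$ lies in the region where $h<0$ and one must delete $\Omega$ rather than restrict to it, the new boundary has
\[
H_{\partial N'}\approx -H_{\partial\Omega}=-h|_{\partial\Omega}>h|_{\partial\Omega},
\]
the last inequality being exactly $-a>a$ for $a<0$. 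One then iterates min-max on $N'$; each iteration produces a \emph{distinct} unstable $h$-PMC hypersurface with uniform area and index bounds, and nondegeneracy (from the good-pair assumption) plus compactness forces termination after finitely many steps, yielding some $\Sigma$ that intersects $\Loc(\Phi_0,h)$. This is the substitute for your barrier argument, and it is the main new idea you are missing.
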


Theorem \ref{thm:main} is the first general existence result for PMC hypersurfaces in arbitrary complete noncompact manifolds (meaning we do not require $(M, g)$ to have special structure).

Surprisingly, the fact that Theorem \ref{thm:main} works (with the sign restriction on the prescribing function $h$) boils down to the fact that negative real numbers $a$ satisfy $-a > a$ (and nonnegative numbers do not). To see exactly where this simple observation enters the proof, we refer the reader to the outline of the proof below (see equation \eqref{eqn:intro}).

In the course of the proof of Theorem \ref{thm:main}, we require a min-max theorem for compact manifolds whose boundary is a good barrier. The following statement is a simplified version (see Theorem \ref{thm:localized_compact} for the most general result).

\begin{theorem}\label{thm:main_compact}
    Let $(N, \partial N, g)$ be a compact manifold of dimension $3 \leq n+1 \leq 7$ with smooth boundary. Let $h \in C^{\infty}(N)$ satisfy $h\mid_{\partial N} < H_{\partial N}$. If there is a homotopy class of sweepouts of finite perimeter sets in $\interior(N)$ with nontrivial width for $\cA^h$, then there is a smooth, closed, almost embedded $h$-PMC hypersurface in $\interior(N)$.
\end{theorem}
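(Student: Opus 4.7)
The plan is to run an Almgren--Pitts--type min-max for the $\cA^h$ functional over the given homotopy class of sweepouts, as in the closed PMC setting developed by Zhou--Zhu, while using the strict boundary inequality $h|_{\partial N}<H_{\partial N}$ as a one-sided barrier that keeps the min-max sequence inside a fixed compact subset of $\interior(N)$.

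First I would turn the barrier inequality into a concrete deformation tool. Because $h|_{\partial N}<H_{\partial N}$, compactness yields $\delta>0$ and a smooth foliation of the collar $U=\{x\in N:\mathrm{dist}(x,\partial N)<\delta\}$ by hypersurfaces $\Sigma_t$ whose mean curvatures (with respect to the inward normal) satisfy $H_{\Sigma_t}>h|_{\Sigma_t}$. Equivalently, there is an inward-pointing vector field $X$ supported in $U$ such that for every finite perimeter set $\Omega\subset N$, the first variation of $\cA^h$ along the flow of $X$ is strictly negative whenever $\partial^*\Omega\cap U$ has positive $\cH^n$-measure. This is the analytic content of the barrier condition: pushing $\partial\Omega$ inward through $U$ strictly decreases $\cA^h$.

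Next I would carry out the min-max. Starting from a minimizing sequence of sweepouts $\{\Phi_i\}$ in the given homotopy class, enlarge the admissible family of deformation vector fields in the pull-tight step to include $X$ above together with the usual tight collection of interior vector fields that detect $\cA^h$-criticality, as in Zhou--Zhu. Standard Almgren--Pitts combinatorics then yields a min-max sequence $\{\Omega_i\}\subset\Phi_i$ with $\cA^h(\Omega_i)\to W>0$ such that (i) the boundaries $\partial\Omega_i$ converge as varifolds to a stationary $V$ with generalized mean curvature $h$, (ii) $V$ is $h$-almost minimizing in small annuli at every point of $\mathrm{supp}\|V\|$, and (iii) $\mathrm{supp}\|V\|\subset N\setminus U'$ for a slightly smaller collar $U'$, by virtue of the extra barrier deformations built into the pull-tight. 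Since $\mathrm{supp}\|V\|$ then lies at positive distance from $\partial N$, the $h$-PMC regularity theorem of Zhou--Zhu applies in the interior with no boundary contributions, and in dimension $3\leq n+1\leq 7$ produces the desired smooth, closed, almost embedded $h$-PMC hypersurface in $\interior(N)$.

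The main obstacle will be compatibility of the barrier retraction with the two other min-max requirements: criticality of the limit and almost minimizing in annuli. Concretely, one must verify that pushing every sweepout off a collar of $\partial N$ neither drops the width below $W$ nor destroys the almost minimizing property at interior points, so that the same $\Omega_i$ realizes all three conclusions simultaneously. The strict inequality $h|_{\partial N}<H_{\partial N}$ (as opposed to $\leq$) is what provides the uniform quantitative decrease of $\cA^h$ under inward push, and is exactly what makes this compatibility work.
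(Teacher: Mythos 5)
Your proposal runs into the obstruction the paper highlights at the outset of its outline of this theorem: after pull-tight, the elements of the critical set are only varifolds with bounds on the first variation of mass, and at that level of regularity one has no orientation information relative to $\partial N$. Concretely, the pull-tight map tightens a varifold $V$ only when $V$ lies outside the stationary set $A_\infty$ of varifolds with $c$-bounded constrained first variation; but the varifold $m|\partial N|$ has smooth support and bounded first variation (since $\partial N$ is a smooth closed hypersurface with bounded mean curvature), so it lies in $A_\infty$ for every $m$ and is fixed by any admissible tightening flow. Adding a barrier vector field supported in the collar does not change this. Your intermediate claim --- that $\delta\cA^h(\Omega)(X)<0$ for every finite perimeter set $\Omega$ with $\cH^n(\partial^*\Omega\cap U)>0$ --- is also false: writing $\delta\cA^h(\Omega)(X)=\int_{\partial^*\Omega}(H_{\partial^*\Omega,\Omega}-h)\langle X,\nu_\Omega\rangle$, the barrier inequality controls $H_{\partial N}$ but says nothing about $H_{\partial^*\Omega}$ or the sign of $\langle X,\nu_\Omega\rangle$ for a general $\Omega$ whose boundary visits the collar with the wrong orientation or with large mean curvature of its own. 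Thus conclusion (iii) of your min-max step, that $\mathrm{supp}\|V\|\subset N\setminus U'$, cannot be extracted from the pull-tight, and the strictness of the hypothesis $h|_{\partial N}<H_{\partial N}$ does not rescue it.

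The paper's argument instead treats the constraint $\Omega\in\cCEE(N)$ as an obstacle and carries out the full Almgren--Pitts machinery without claiming that the pull-tight output avoids $\partial N$. Pull-tight uses only inward-pointing vector fields $\mathcal{X}_{\text{cpt}}^+(N)$ (so the constraint is preserved) and delivers a varifold with $c$-bounded constrained first variation; the combinatorial argument extracts one that is $h$-almost minimizing in small annuli. Boundary avoidance is then proved \emph{afterwards}: replacements are constrained local $\cA^h$-minimizers, so Tamanini's obstacle regularity (Theorem \ref{thm:obstacle}) gives $C^{1,\alpha}$ regularity up to $\partial N$, and that regularity restores precisely the orientation information that was missing at the varifold level. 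The Hopf lemma combined with the barrier inequality (Corollary \ref{cor:local_reg}) then forbids boundary touching for the replacements, and a two-replacement argument using planar tangent cones and unique continuation (Propositions \ref{prop:tangent_plane}--\ref{prop:int}) transfers this to the original almost-minimizing varifold, after which the interior $h$-PMC regularity theory of Zhou--Zhu applies unchanged. The barrier inequality is used only after $C^{1,\alpha}$ regularity is in hand, not during pull-tight.
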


The main novelty of Theorem \ref{thm:main_compact} is that we only require a \emph{one-sided} bound on $h$ relative to the mean curvature of the boundary. This setting is more delicate because the possibility of boundary touching depends on the orientation of the hypersurface. For instance, a minimal boundary cannot be touched from the inside by a $1$-CMC hypersurface oriented \emph{away} from the boundary, but it \emph{can} be touched from the inside by a $1$-CMC hypersurface oriented \emph{towards} the boundary (e.g.\ any sphere in Euclidean space has nonzero CMC but touches its tangent planes from one side).

We apply Theorems \ref{thm:main} and \ref{thm:main_compact} to Questions \ref{quest:PMCrn} and \ref{quest:CMCfv}. Towards Question \ref{quest:PMCrn}, we have the following result.

\begin{corollary}\label{cor:PMCrn}
    If $h \in C^{\infty}_{\text{loc}}(\R^{n+1})$ for $3 \leq n+1 \leq 7$ satisfies
    \begin{enumerate}
        \item there exists a nonempty bounded finite perimeter  set $\Lambda$ satisfying $\cA^h(\Lambda) \leq 0$,
        \item $h(x) \leq C|x|^{-1-\alpha}$ for some $\alpha,\ C > 0$,
    \end{enumerate}
    then there is a smooth, closed, almost embedded $h$-PMC hypersurface in $\R^{n+1}$.
\end{corollary}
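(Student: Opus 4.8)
The plan is to deduce Corollary \ref{cor:PMCrn} from Theorem \ref{thm:main} by verifying that hypothesis (1) produces a homotopy class of sweepouts with nontrivial width for $\cA^h$ on $\R^{n+1}$. First I would fix a large radius $R$ (to be chosen) and work with bounded finite perimeter subsets of the ball $B_R$; the natural sweepout is the one-parameter family $\{\Omega_t\}_{t \in [0,1]}$ interpolating from the empty set $\Omega_0 = \emptyset$ to the full ball $\Omega_1 = B_R$, for instance by sliding a hyperplane across $B_R$ or by taking sublevel sets of a Morse function. For the empty set, $\cA^h(\emptyset) = 0$, and I will arrange (by choosing $R$ large) that $\cA^h(B_R) \le 0$ as well: indeed $\cA^h(B_R) = \mathrm{Area}(\partial B_R) - \int_{B_R} h$, and since hypothesis (2) gives $h(x) \le C|x|^{-1-\alpha}$, the positive part of $\int_{B_R} h$ grows at most like $\int_0^R r^{n - 1 - \alpha}\,dr = O(R^{n-\alpha})$ if $\alpha < n$ (and is bounded if $\alpha \ge n$), while $\mathrm{Area}(\partial B_R)$ grows like $R^n$; so for $R$ large the boundary area term dominates and $\cA^h(B_R) \le 0$. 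One must be a little careful here: $h$ could be very negative somewhere, which only helps, and the decay bound controls $\int h_+$, which is all that is needed. So both endpoints of the sweepout have $\cA^h \le 0$.

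The key point is then to show the width is positive, i.e.\ $\inf_{\text{sweepouts}} \sup_t \cA^h(\Omega_t) > 0$, which is where hypothesis (1) enters. The obstacle is a topological/isoperimetric one: any path of bounded finite perimeter sets from $\emptyset$ to $B_R$ (or, after a homotopy, any competitor sweepout in the same class) must at some intermediate time $t$ pass through a set $\Omega_t$ of a ``medium'' size, and for such sets $\cA^h(\Omega_t)$ is bounded below by a positive constant. To make this precise I would use the set $\Lambda$ from hypothesis (1): the family $\{\Omega_t\}$ can be taken inside $B_R$ with $\Lambda \subset B_R$, and by the isoperimetric inequality in $\R^{n+1}$ together with the decay of $h$, there is $\delta > 0$ so that every bounded finite perimeter set $\Omega$ with $\delta \le \mathrm{vol}(\Omega) \le \mathrm{vol}(B_R) - \delta$ satisfies $\cA^h(\Omega) \ge c_0 > 0$ for a uniform constant $c_0$ (the area term dominates by isoperimetry, and $\int_\Omega h_+$ is uniformly small once we also know $\Omega$ avoids a large ball, or is controlled via the global bound $\int_{\R^{n+1}} h_+ < \infty$ which follows from (2) when $\alpha$ is large, with a separate argument when $\alpha$ is small). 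Since $\mathrm{vol}$ is continuous along the sweepout and goes from $0$ to $\mathrm{vol}(B_R)$, the intermediate value theorem forces $\sup_t \cA^h(\Omega_t) \ge c_0$. This shows the width is at least $c_0 > 0$. Hypothesis (1) — the existence of $\Lambda$ with $\cA^h(\Lambda) \le 0$ — is what guarantees the relevant homotopy class is genuinely ``mountain pass'' rather than trivial: it certifies that $\cA^h$ takes nonpositive values on both large and trivial sets so that the positive barrier in between is detected.

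Having verified the hypotheses of Theorem \ref{thm:main} — namely $M = \R^{n+1}$ is complete, $3 \le n+1 \le 7$, $h \in C^\infty_{\mathrm{loc}}(\R^{n+1})$ is nonpositive outside a compact set (which follows from (2) only if... no: (2) gives $h \le C|x|^{-1-\alpha}$, which does \emph{not} force $h \le 0$ outside a compact set, so I must instead observe that the theorem's hypothesis as applied here should be read through the most general Theorem \ref{thm:localized_noncompact}, or alternatively note that the relevant sign condition in Theorem \ref{thm:main} is ``nonpositive outside a compact set'' and reconcile: in fact the decay hypothesis (2) is the substitute used in this corollary, so the honest statement is that Corollary \ref{cor:PMCrn} follows from the general Theorem \ref{thm:localized_noncompact} with the decay bound playing the role of the sign condition in controlling escape to infinity) — I conclude there is a smooth, complete, finite area, almost embedded $h$-PMC hypersurface $\Sigma$. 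Finally, I would upgrade ``complete and finite area'' to ``closed'': a complete finite area almost embedded $h$-PMC hypersurface in $\R^{n+1}$ with $h$ decaying like $|x|^{-1-\alpha}$ must be compact, because otherwise it has ends going to infinity where $h \to 0$, forcing the ends to be asymptotically minimal with finite area, and a finite-area almost embedded almost-minimal end in Euclidean space is ruled out by a monotonicity/density argument (the area of $\Sigma \cap B_r$ would have to grow, contradicting finite total area).

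I expect the main obstacle to be the positive-width estimate — specifically, getting a \emph{uniform} lower bound $\cA^h(\Omega) \ge c_0 > 0$ over all medium-volume sets that is independent of $R$, which requires combining the Euclidean isoperimetric inequality with careful control of $\int_\Omega h_+$ using only the decay rate $|x|^{-1-\alpha}$; the borderline case $\alpha$ small (where $\int_{\R^{n+1}} h_+$ may be infinite) needs the extra input that medium-volume sets far out in $\R^{n+1}$ have large boundary relative to the $h$-integral they can capture. The compactness-of-$\Sigma$ step at the end is the second most delicate point, relying on monotonicity formula estimates for almost minimal hypersurfaces with small mean curvature at infinity.
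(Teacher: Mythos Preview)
Your approach has two fatal gaps, and the paper takes a quite different route.

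\textbf{First gap: the endpoint $\cA^h(B_R)\le 0$ is false.} You write that since $\int_{B_R} h_+ = O(R^{n-\alpha})$ while $\mathrm{Area}(\partial B_R)\sim R^n$, ``the boundary area term dominates and $\cA^h(B_R)\le 0$.'' But $\cA^h(B_R)=\mathrm{Area}(\partial B_R)-\int_{B_R}h$, so if the area term dominates a positive integral you get $\cA^h(B_R)\to +\infty$, not $\le 0$. And $h$ being very negative makes $-\int_{B_R}h$ even more positive, so it does \emph{not} help. With endpoint $B_R$ there is no mountain pass structure.

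\textbf{Second gap: Theorem \ref{thm:main} (and the general Theorem \ref{thm:localized_noncompact}) do not apply.} You correctly notice that $h(x)\le C|x|^{-1-\alpha}$ does not force $h\le 0$ outside a compact set, but then hand-wave that the decay ``plays the role of the sign condition.'' It does not: Theorem \ref{thm:localized_noncompact} requires $h\in\mathcal{W}_{\Omega_0}$, which for $\Omega_0=\varnothing$ is exactly $h\le 0$ outside a compact set. There is no substitute hypothesis.

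\textbf{What the paper actually does.} The decay bound is used for an entirely different purpose: it makes the sphere $\partial B_R$ a \emph{barrier}. For $R$ large enough that $\Lambda\subset B_R$ and $CR^{-1-\alpha}<n/R$, one has $h|_{\partial B_R}<H_{\partial B_R}=n/R$. Now the \emph{compact} Theorem \ref{thm:main_compact} (more precisely Theorem \ref{thm:localized_compact}) applies on $N=\overline{B_R}$ with $P_+=\varnothing$, $P_-=\partial B_R$. The sweepout is $\Phi_0(t)=\Lambda\cap B_{Rt}(0)$, going from $\varnothing$ to $\Lambda$; both endpoints have $\cA^h\le 0$ by hypothesis (1). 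The width is positive because for small $|\Omega|>0$ the Euclidean isoperimetric inequality gives $\cA^h(\Omega)\ge c|\Omega|^{n/(n+1)}-(\sup_N|h|)|\Omega|>0$, and any path from $\varnothing$ to $\Lambda\neq\varnothing$ must pass through such small volumes. The output hypersurface lies in $\interior(B_R)$ and is therefore \emph{closed} automatically---no monotonicity or end analysis is needed.

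So the two missing ideas are: (i) take $\Lambda$, not $B_R$, as the second endpoint of the sweepout; (ii) use the decay of $h$ to verify the one-sided barrier condition $h<H_{\partial B_R}$ and invoke the compact-with-barrier theorem rather than the noncompact one.
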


We emphasize that condition (2) above is a one-sided bound, meaning that $h$ can be arbitrarily large in magnitude if it is negative.

Corollary \ref{cor:PMCrn} applies to a set of prescribing functions that is disjoint from the set of prescribing functions considered by \cite{liam}\footnote{Both results require $\cA^h$ to have a nontrivial mountain pass over bounded sets, but \cite{liam} requires $h$ to limit to a positive constant at infinity, whereas our result requires $h$ to be nonpositive at infinity.}. A nice feature of our result is that, unlike most of the earlier progress towards Question \ref{quest:PMCrn}, we do not require any restrictions on the derivatives of $h$, nor do we require $h$ to limit to a constant at infinity. Moreover, our technique is flexible enough to handle prescribing functions in any complete noncompact ambient space (and not just spaces with nice structure).

Towards Question \ref{quest:CMCfv}, we have the following result.

\begin{corollary}\label{cor:CMCfv}
    Let $(M, g)$ be a complete finite volume manifold of dimension $3 \leq n+1 \leq 7$. If
    \[
        0 < c < \mathscr{D}(M, g) := \sup_{v_1 \neq v_2} \left|\frac{\mathscr{I}(v_2) - \mathscr{I}(v_1)}{v_2 - v_1}\right|,
    \]
    where $\mathscr{I}$ is the isoperimetric profile of $(M, g)$, then there is a smooth, complete, finite area, almost embedded $c$-CMC hypersurface in $M$.
\end{corollary}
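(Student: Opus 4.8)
The plan is to deduce this from Theorem \ref{thm:main} applied with $h \equiv c$, so the key point is to verify the hypotheses: that $(M,g)$ is complete (given), that the constant $c$ is nonpositive outside a compact set — which is vacuous since we may instead arrange for the relevant sweepout to exist — and, crucially, that there is a homotopy class of sweepouts of bounded finite perimeter sets with nontrivial width for $\cA^c$. Wait: $h \equiv c > 0$ is \emph{positive}, not nonpositive outside a compact set, so a direct application fails. Instead I would first pass to the compact setting. The isoperimetric profile $\mathscr{I}(v)$ of a finite volume manifold is concave on large volumes and, by finiteness of $\vol(M,g) =: V$, the chord $\mathscr{D}(M,g)$ is realized (or approximated) by a slope that, by elementary calculus, is bounded below by $|\mathscr{I}'(v)|$ for some regular value $v$. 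For any $c$ strictly below this slope, I would find two volumes $v_1 < v_2$ with $|\mathscr{I}(v_2) - \mathscr{I}(v_1)| > c\,|v_2 - v_1|$; geometrically this means that the "cost" $\cA^c(\Omega) = \mathrm{Area}(\partial\Omega) - c\,\vol(\Omega)$ takes a strictly smaller value on an isoperimetric region of one volume than of the other, which is the mechanism producing a nontrivial mountain pass.

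Concretely, the second step is to build a one-parameter family (sweepout) of bounded finite perimeter sets interpolating between $\Omega_1$ (an isoperimetric region of volume $v_1$) and $\Omega_2$ (of volume $v_2$, or with appropriate nesting so the family sweeps out), arranged so that the endpoints have $\cA^c$ strictly below the value $\cA^c$ must attain somewhere along \emph{every} such family. The width $\omega$ of the corresponding homotopy class is then strictly greater than $\max\{\cA^c(\Omega_1), \cA^c(\Omega_2)\}$: this is precisely where the inequality $c < \mathscr{D}(M,g)$ enters, since it forces the "barrier" value $\inf_v \mathscr{I}(v) - c v$ — attained along any sweepout passing through every intermediate volume — to exceed at least one endpoint value, giving a genuine mountain pass rather than a monotone family. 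One must check the homotopy class is nontrivial in the sense required by the min-max setup of \S\ref{subsec:min-max_setup}, which for a sweepout between two genuinely distinct finite perimeter sets across a mountain pass is standard.

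The third step is to invoke Theorem \ref{thm:main} with $h \equiv c$. But again $c > 0$ is not nonpositive at infinity. The resolution: since $(M,g)$ has finite volume, one truncates — choose a large compact exhaustion piece $N_k$ with smooth boundary of controlled (e.g. large positive mean curvature, or at least $H_{\partial N_k} > c$) and apply Theorem \ref{thm:main_compact} on $N_k$; the one-sided barrier condition $c < H_{\partial N_k}$ can be arranged by choosing $N_k$ to be a sublevel set of a suitable exhaustion function, or by a smoothing of a metric ball, using finiteness of volume to push the boundary out where the isoperimetric profile is concave and decreasing. This yields closed almost embedded $c$-CMC hypersurfaces $\Sigma_k$ in $\interior(N_k)$ with area bounded by the width, which is itself bounded by $\mathscr{I}(v)$ for the relevant volume — \emph{uniformly} in $k$. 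Then I would extract a subsequential limit as $k \to \infty$, using the uniform area bound together with standard compactness for almost embedded PMC hypersurfaces (as in the proof of Theorem \ref{thm:main} itself) to obtain a complete, finite area, almost embedded $c$-CMC hypersurface in $M$.

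The main obstacle I anticipate is the interaction between the min-max width and the truncation: one must show the width of the sweepout class on the truncated manifold $N_k$ does not blow up (so the limit hypersurface has finite area) and does not collapse to an endpoint value (so the limit is nontrivial and not, e.g., swallowed into the boundary or the cusp). This is exactly the "escape to infinity" difficulty flagged in the introduction, and the quantity $\mathscr{D}(M,g)$ is precisely calibrated so that the mountain-pass gap survives truncation: for $c$ below the chord slope the barrier value beats the endpoints by a definite amount independent of $k$. Making this uniform estimate precise — and checking that the limiting hypersurface is genuinely $c$-CMC and not merely stationary for $\cA^c$ with a piece pushed to infinity — is the technical heart of the argument, and I expect it to reuse the localized min-max machinery and the sign trick ($-a > a$ for $a < 0$) exactly as in the proof of Theorem \ref{thm:main}.
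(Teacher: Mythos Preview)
Your proposal has a genuine gap rooted in the choice $h \equiv c > 0$. You correctly notice this violates the hypothesis of Theorem~\ref{thm:main}, but your workaround via hand-truncation does not close the hole. Two concrete problems: First, you assert one can exhaust $M$ by domains $N_k$ with $H_{\partial N_k} > c$; finite volume alone gives no such mean-curvature control, so the barrier hypothesis of Theorem~\ref{thm:main_compact} is not available in general. Second, and more seriously, even granting the barrier, nothing in your argument prevents $\Sigma_k$ from escaping to infinity. The paper's localization machinery is exactly what pins the min-max hypersurface to a fixed compact set, and that machinery collapses for $h = c > 0$: with a bounded sweepout starting at $\varnothing$ one has $I_+(\Phi_0)=\varnothing$ while $I_-(\Phi_0)$ contains the complement of a compact set, so $\overline{I_-(\Phi_0)}\cap\overline{\{h>0\}}$ is unbounded and $\Loc(\Phi_0,c)$ is all of $M$. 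Your fourth paragraph correctly flags escape as ``the technical heart,'' but the uniform mountain-pass gap you invoke controls only the \emph{area} of $\Sigma_k$, not its \emph{location}; the limit could well be empty.

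The paper's fix is a one-line observation you missed: take $h \equiv -c$ rather than $h \equiv c$. A $(-c)$-PMC hypersurface is a $c$-CMC hypersurface after flipping the unit normal, and now $h<0$ everywhere, so $h\in\mathcal{W}_{\varnothing}$ trivially and Theorem~\ref{thm:localized_noncompact} applies directly with no ad hoc truncation. With this sign $\{h>0\}=\varnothing$, hence $\Loc(\Phi_0,-c)=\Span(\Phi_0)$ is compact, and the localized min-max forces $\Sigma$ to meet this fixed compact set. The functional becomes $\cA^{-c}(\Omega)=\cH^n(\partial^*\Omega)+c\,\cH^{n+1}(\Omega)$, for which $\varnothing$ is the unique global minimizer; the paper then sweeps from $\varnothing$ to a single bounded set $\Omega_1$ approximating an isoperimetric region of volume $v_2$ (using the symmetry $\mathscr{I}(v)=\mathscr{I}(\mathrm{Vol}(M)-v)$ to arrange $v_1<v_2$ with $\mathscr{I}(v_1)>\mathscr{I}(v_2)$), and the condition $c<(\mathscr{I}(v_1)-\mathscr{I}(v_2))/(v_2-v_1)$ says precisely that every path through volume $v_1$ has $\cA^{-c}$ strictly above $\cA^{-c}(\Omega_1)$, yielding the nontrivial width.
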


Since $\mathscr{D}(M, g) > 0$ for all complete finite volume manifolds, we give a positive answer to Question \ref{quest:CMCfv} for a nonempty range of $c$ for every complete finite volume manifold. We emphasize that this conclusion is the first positive result towards Question \ref{quest:CMCfv}. 

Moreover, we completely resolve Question \ref{quest:CMCfv} for any manifold with $\mathscr{D}(M, g) = +\infty$. We provide examples (see Example \ref{ex:rot_sym}) of complete noncompact manifolds with finite volume satisfying $\mathscr{D}(M, g) = +\infty$, so the class of such manifolds is nontrivial.

Finally, we obtain a clean application to complete finite volume hyperbolic manifolds.

\begin{corollary}\label{cor:finitevolhyp}
    Let $(M, g)$ be a complete finite volume hyperbolic manifold of dimension $3 \leq n+1 \leq 7$. For any $0 < c < n$, there is a smooth, complete, finite area, almost embedded $c$-CMC hypersurface in $M$.
\end{corollary}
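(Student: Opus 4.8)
The plan is to deduce this from Corollary \ref{cor:CMCfv}, so the entire task reduces to showing that a complete finite volume hyperbolic manifold $(M, g)$ of dimension $n+1$ satisfies $\mathscr{D}(M, g) \geq n$, i.e.\ that the isoperimetric profile $\mathscr{I}$ has a difference quotient of absolute value at least $n$ somewhere. Since $\mathscr{D}$ is defined as a supremum of $|(\mathscr{I}(v_2) - \mathscr{I}(v_1))/(v_2 - v_1)|$, it suffices to understand the behavior of $\mathscr{I}(v)$ for $v$ near the total volume $V = \vol(M, g) < \infty$ and near $v = 0$. First I would recall the structure of a finite volume hyperbolic manifold: it is the union of a compact core with finitely many cusps, each cusp isometric to a warped product $(0, \infty) \times T$ with metric $dt^2 + e^{-2t} g_T$ over a flat $(n)$-manifold $T$. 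In such a cusp, the cross-sectional slices $\{t\} \times T$ have area decaying like $e^{-nt}$ and mean curvature (with respect to the outward normal pointing into the thin part) exactly equal to $\pm n$ — this is precisely where the constant $n$ comes from.

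The key computation is to evaluate the one-sided derivative of $\mathscr{I}$ at the full volume $v = V$. Consider the complements of the super-level sets $\{t > s\}$ in a fixed cusp; these are finite perimeter sets of volume $V - \int_s^\infty e^{-nt}\,(\text{const})\,dt = V - c\,e^{-ns}$ whose boundary area is $c'\,e^{-ns}$. As $s \to \infty$, both the deficit in volume $V - v(s)$ and the perimeter $\mathscr{I}$-competitor go to zero at the same exponential rate $e^{-ns}$, with ratio exactly $n$ up to the geometry of the flat cusp cross-section. This shows $\mathscr{I}(v) \leq n(V - v) + o(V - v)$ as $v \uparrow V$, while $\mathscr{I}(V) = 0$; combined with $\mathscr{I} \geq 0$ this forces the left derivative of $\mathscr{I}$ at $V$ to have absolute value at least... well, I need it to be at least $n$, which requires the matching lower bound $\mathscr{I}(v) \geq n(V-v) - o(V-v)$. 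That lower bound should follow from the fact that any region of volume close to $V$ must have a boundary that (by a calibration or direct comparison argument in the cusp, using that the slices are the isoperimetric regions for complements near the end) has area at least $n$ times the co-volume; alternatively one can invoke the monotonicity/concavity properties of $\mathscr{I}$ for finite volume manifolds together with the slice computation to pin down the derivative. Either way, $\mathscr{D}(M, g) \geq n$, and since we need strict inequality $c < \mathscr{D}(M, g)$ and the hypothesis is $c < n$, we are done by Corollary \ref{cor:CMCfv}.

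The main obstacle I anticipate is making the lower bound $\mathscr{I}(v) \geq n(V - v)(1 - o(1))$ rigorous: one must rule out that a cleverly chosen region of nearly full volume has anomalously small perimeter, which amounts to a sharp isoperimetric-type statement in the cusp ends. The cleanest route is probably to use the foliation of each cusp by the slices $\{t\} \times T$ as a calibration: the vector field dual to $dt$ (suitably normalized) has divergence $-n$ on the thin parts, so for any finite perimeter set $\Omega$ with $V - \vol(\Omega)$ small (hence $M \setminus \Omega$ concentrated deep in the cusps), the divergence theorem gives $\mathrm{Area}(\partial \Omega) \geq n\,\vol(M \setminus \Omega) - (\text{error from the compact core})$, and the error is negligible compared to $\vol(M \setminus \Omega)$ once $\Omega$ is large enough. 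Combined with the explicit slice competitors giving the matching upper bound, this computes the one-sided derivative exactly and yields $\mathscr{D}(M, g) \geq n$. A secondary point to handle carefully is that $\mathscr{D}$ requires a \emph{supremum} over pairs, and one should confirm the difference quotient genuinely exceeds every $c < n$ (not merely equals $n$ in a limit) — but this is immediate since the slice competitors at finite $s$ already give difference quotients arbitrarily close to $n$, and any value $c < n$ is strictly beaten by taking $s$ large enough.
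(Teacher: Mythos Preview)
Your proposal is correct and follows the same strategy as the paper: reduce to $\mathscr{D}(M,g)\geq n$ via a calibration by the cusp slices (your divergence-theorem argument is the dual of the paper's $n$-form $\omega=\rho^n\,\vol_N$ in Proposition~\ref{prop:compute}), then invoke Corollary~\ref{cor:CMCfv}. The paper handles the compact-core error more carefully through Proposition~\ref{prop:noncompact_piece}, using a case split---if a non-negligible portion of $M\setminus\Omega$ lies in the core, the relative isoperimetric inequality there already gives a stronger bound---which is exactly what your parenthetical ``hence $M\setminus\Omega$ concentrated deep in the cusps'' needs in order to be rigorous.
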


\subsection{Outline of the proof of Theorem \ref{thm:main}}
We use a tubular neighborhood cutting scheme similar to the one developed in \cite{localization} for the area functional. In that work, the crucial fact is that if we delete a small tubular neighborhood of a two-sided unstable minimal hypersurface, the new boundary components will be strictly mean convex, and therefore provide a good barrier for subsequent min-max constructions. 
A key difficult for the $\cA^h$ functional is that if we delete a tubular neighborhood of a two-sided $\cA^h$-unstable $h$-PMC hypersurface, then one of the new boundary components may not provide a good barrier for subsequent min-max constructions. Instead, we develop a more delicate cutting argument.

Suppose $h \in C^{\infty}_{\text{loc}}(M)$ is negative outside a compact set $K$\footnote{The case where $h$ is only nonpositive outside $K$ follows by approximation.} and $\Phi_0 : [0, 1] \to \mathcal{C}(M)$ is a sweepout of bounded finite perimeter sets whose homotopy class has nontrivial width for $\cA^h$. By enlarging $K$ if necessary, suppose that $\overline{\Phi_0(t)} \subset K$ for all $t \in [0, 1]$.

We take an exhaustion of $M$ by compact sets $\{N_i\}$ with smooth boundary satisfying $K \subset N_i$ for all $i$. We modify $h$ in a small neighborhood of $\partial N_i$ so that $h\mid_{\partial N_i} < H_{\partial N_i}$, making Theorem \ref{thm:main_compact} applicable. We also perturb $h$ and the metric (to a ``good pair'' in the sense of \cite[\S3.3]{zhou_mult}) so that the hypersurface produced by Theorem \ref{thm:main_compact} is an embedded boundary that is unstable for $\cA^h$. We claim that we can find an $h$-PMC hypersurface intersecting $K$ (with uniform area and index bounds). If so, then the result follows by taking a limit of these hypersurfaces as $i \to \infty$, using the compactness theorem of \cite[Theorem 2.8]{zhou_mult} (which builds on the compactness results of \cite{sharp}). The limit cannot escape to infinity because it must intersect $K$.

If the hypersurface $\Sigma = \partial \Omega$ produced by Theorem \ref{thm:main_compact} intersects $K$, then we are done. Otherwise, we either have $K \subset \Omega$ or $K \cap \Omega = \varnothing$. In either case, we find a subdomain of $N_i$ that contains $K$ but does not contain $\Sigma$ and satisfies the hypotheses of Theorem \ref{thm:main_compact} (see the below paragraphs). So we produce a new $h$-PMC hypersurface that is distinct from $\Sigma$. By choosing $h$ and the metric generically (so that every closed $h$-PMC hypersurface is nondegenerate), we can only iterate these steps finitely many times, so we eventually find $\Sigma$ intersecting $K$.

Case 1: $K \subset \Omega$. Let $\Sigma^*$ be a component of $\partial \Omega$ that is unstable for $\cA^h$. By pushing $\Sigma^*$ into $\Omega$, the new boundary component $\tilde{\Sigma}^*$ satisfies $h\mid_{\tilde{\Sigma}^*} < H_{\tilde{\Sigma}^*}$. We can similarly modify the other components of $\partial \Omega$ to produce $\tilde{\Omega}$ satisfying $\Sigma^* \cap \tilde{\Omega} = \varnothing$ and $h\mid_{\partial \tilde{\Omega}} < H_{\partial \tilde{\Omega}}$. Hence, $\Sigma \not\subset \tilde{\Omega}$, and we can apply Theorem \ref{thm:main_compact} in $\tilde{\Omega}$ and iterate. For an illustration of this case, see Figure \ref{fig:1}.

\tikzset{every picture/.style={line width=0.75pt}} %set default line width to 0.75pt        

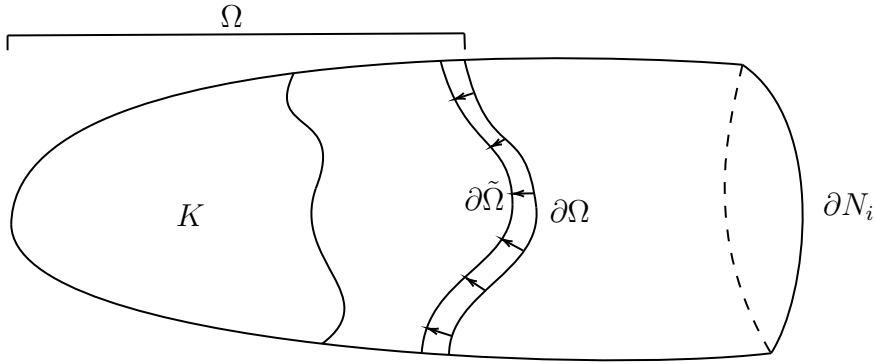
\begin{figure}
\begin{tikzpicture}[x=0.75pt,y=0.75pt,yscale=-0.8,xscale=0.8]
%uncomment if require: \path (0,300); %set diagram left start at 0, and has height of 300

%Curve Lines [id:da49248818639873615] 
\draw    (92,150.5) .. controls (93,27.5) and (486,43.5) .. (553,49.5) ;
%Curve Lines [id:da4114445974940214] 
\draw    (92,150.5) .. controls (94,228.5) and (453,245.5) .. (571,231.5) ;
%Curve Lines [id:da40029475276005955] 
\draw    (553,49.5) .. controls (607,86.5) and (594,196.5) .. (571,231.5) ;
%Curve Lines [id:da6056900810381111] 
\draw  [dash pattern={on 4.5pt off 4.5pt}]  (553,49.5) .. controls (543,92.5) and (527,158.5) .. (571,231.5) ;
%Curve Lines [id:da8541692208492324] 
\draw    (284,127.5) .. controls (304,77.5) and (251,94.17) .. (270,55.17) ;
%Curve Lines [id:da6821965115022358] 
\draw    (288,225.5) .. controls (328,195.5) and (268,173.5) .. (284,127.5) ;
%Curve Lines [id:da6842782032628896] 
\draw    (423,139.5) .. controls (416,81.5) and (393.67,114.17) .. (377.67,46.17) ;
%Curve Lines [id:da9512817830223264] 
\draw    (368,232.5) .. controls (368,192.5) and (427,180.5) .. (423,139.5) ;
%Straight Lines [id:da036259085585762296] 
\draw [line width=0.75]    (369.8,220.6) -- (356.57,216.43) ;
\draw [shift={(354.67,215.83)}, rotate = 17.48] [color={rgb, 255:red, 0; green, 0; blue, 0 }  ][line width=0.75]    (6.56,-1.97) .. controls (4.17,-0.84) and (1.99,-0.18) .. (0,0) .. controls (1.99,0.18) and (4.17,0.84) .. (6.56,1.97)   ;
%Straight Lines [id:da2738799202107931] 
\draw [line width=0.75]    (390.6,191.8) -- (380.67,185.27) ;
\draw [shift={(379,184.17)}, rotate = 33.35] [color={rgb, 255:red, 0; green, 0; blue, 0 }  ][line width=0.75]    (6.56,-1.97) .. controls (4.17,-0.84) and (1.99,-0.18) .. (0,0) .. controls (1.99,0.18) and (4.17,0.84) .. (6.56,1.97)   ;
%Straight Lines [id:da08118653914965235] 
\draw [line width=0.75]    (414.6,166.67) -- (403.09,160.45) ;
\draw [shift={(401.33,159.5)}, rotate = 28.38] [color={rgb, 255:red, 0; green, 0; blue, 0 }  ][line width=0.75]    (6.56,-1.97) .. controls (4.17,-0.84) and (1.99,-0.18) .. (0,0) .. controls (1.99,0.18) and (4.17,0.84) .. (6.56,1.97)   ;
%Straight Lines [id:da7849990504662467] 
\draw [line width=0.75]    (421.4,130.6) -- (410.67,130.8) ;
\draw [shift={(408.67,130.83)}, rotate = 358.95] [color={rgb, 255:red, 0; green, 0; blue, 0 }  ][line width=0.75]    (6.56,-1.97) .. controls (4.17,-0.84) and (1.99,-0.18) .. (0,0) .. controls (1.99,0.18) and (4.17,0.84) .. (6.56,1.97)   ;
%Straight Lines [id:da7774259144445682] 
\draw [line width=0.75]    (403,96.6) -- (396.42,100.21) ;
\draw [shift={(394.67,101.17)}, rotate = 331.28] [color={rgb, 255:red, 0; green, 0; blue, 0 }  ][line width=0.75]    (6.56,-1.97) .. controls (4.17,-0.84) and (1.99,-0.18) .. (0,0) .. controls (1.99,0.18) and (4.17,0.84) .. (6.56,1.97)   ;
%Straight Lines [id:da5870879211963328] 
\draw [line width=0.75]    (383.4,67.4) -- (373.57,70.56) ;
\draw [shift={(371.67,71.17)}, rotate = 342.2] [color={rgb, 255:red, 0; green, 0; blue, 0 }  ][line width=0.75]    (6.56,-1.97) .. controls (4.17,-0.84) and (1.99,-0.18) .. (0,0) .. controls (1.99,0.18) and (4.17,0.84) .. (6.56,1.97)   ;
%Curve Lines [id:da8801747688272991] 
\draw    (350.8,230.9) .. controls (354.2,183.4) and (408.33,176.5) .. (408,137.5) ;
%Curve Lines [id:da7273970498094651] 
\draw    (408,137.5) .. controls (407.67,95.17) and (377.8,103.8) .. (362.6,47) ;
%Straight Lines [id:da3923544869421961] 
\draw    (89.67,31) -- (377.67,29.67) ;
%Straight Lines [id:da31849296205134425] 
\draw    (89.67,31) -- (89.67,40.33) ;
%Straight Lines [id:da6367432605932853] 
\draw    (377.67,29.67) -- (378,39.88) ;

% Text Node
\draw (602,128.4) node [anchor=north west][inner sep=0.75pt]    {$\partial N_{i}$};
% Text Node
\draw (194,135.4) node [anchor=north west][inner sep=0.75pt]    {$K$};
% Text Node
\draw (429.6,132.8) node [anchor=north west][inner sep=0.75pt]    {$\partial \Omega $};
% Text Node
\draw (375.8,121) node [anchor=north west][inner sep=0.75pt]    {$\partial \tilde{\Omega }$};
% Text Node
\draw (222,10.4) node [anchor=north west][inner sep=0.75pt]    {$\Omega $};

\end{tikzpicture}
\caption{$K \subset \Omega$.}
\label{fig:1}
\end{figure}

Case 2: $K \cap \Omega = \varnothing$. We let $\tilde{\Omega}$ be the open set obtained by slightly enlarging $\Omega$ along all its boundary components. Then $\tilde{N}_i = N_i \setminus \tilde{\Omega}$ contains $K$, does not contain $\Sigma$, and satisfies
\begin{equation}\label{eqn:intro}
H_{\partial \tilde{N}_i} \approx -H_{\partial \Omega} = -h\mid_{\partial \Omega} > h\mid_{\partial \Omega},
\end{equation}
where the last inequality follows from the fact that $h\mid_{N_i \setminus K} < 0$. Hence, we can apply Theorem \ref{thm:main_compact} in $\tilde{N}_i$ and iterate. For an illustration of this case, see Figure \ref{fig:2}.

\begin{figure}
\begin{tikzpicture}[x=0.75pt,y=0.75pt,yscale=-0.8,xscale=0.8]
%uncomment if require: \path (0,300); %set diagram left start at 0, and has height of 300

%Curve Lines [id:da2145180018844558] 
\draw    (91,155.5) .. controls (92,32.5) and (485,48.5) .. (552,54.5) ;
%Curve Lines [id:da017529568859803057] 
\draw    (91,155.5) .. controls (93,233.5) and (452,250.5) .. (570,236.5) ;
%Curve Lines [id:da350510979645439] 
\draw    (552,54.5) .. controls (606,91.5) and (593,201.5) .. (570,236.5) ;
%Curve Lines [id:da14109876776123476] 
\draw  [dash pattern={on 4.5pt off 4.5pt}]  (552,54.5) .. controls (542,97.5) and (526,163.5) .. (570,236.5) ;
%Curve Lines [id:da4880901999522278] 
\draw    (283,132.5) .. controls (303,82.5) and (250,99.17) .. (269,60.17) ;
%Curve Lines [id:da48587075492951426] 
\draw    (287,230.5) .. controls (327,200.5) and (267,178.5) .. (283,132.5) ;
%Curve Lines [id:da6588712707266741] 
\draw    (402,111.5) .. controls (449,98.5) and (423,144.5) .. (449,149.5) ;
%Curve Lines [id:da38401975356571305] 
\draw    (416,191.5) .. controls (464,194.5) and (487,156.5) .. (449,149.5) ;
%Curve Lines [id:da5519118430960576] 
\draw    (402,111.5) .. controls (365,121.5) and (361,185.5) .. (416,191.5) ;
%Straight Lines [id:da9004459570782785] 
\draw    (451.6,184.6) -- (457.35,191.83) ;
\draw [shift={(458.6,193.4)}, rotate = 231.5] [color={rgb, 255:red, 0; green, 0; blue, 0 }  ][line width=0.75]    (6.56,-1.97) .. controls (4.17,-0.84) and (1.99,-0.18) .. (0,0) .. controls (1.99,0.18) and (4.17,0.84) .. (6.56,1.97)   ;
%Straight Lines [id:da2490438147795575] 
\draw    (468.4,163) -- (477.45,161.03) ;
\draw [shift={(479.4,160.6)}, rotate = 167.69] [color={rgb, 255:red, 0; green, 0; blue, 0 }  ][line width=0.75]    (6.56,-1.97) .. controls (4.17,-0.84) and (1.99,-0.18) .. (0,0) .. controls (1.99,0.18) and (4.17,0.84) .. (6.56,1.97)   ;
%Straight Lines [id:da5073230983240785] 
\draw    (438,141.4) -- (446.97,134.97) ;
\draw [shift={(448.6,133.8)}, rotate = 144.36] [color={rgb, 255:red, 0; green, 0; blue, 0 }  ][line width=0.75]    (6.56,-1.97) .. controls (4.17,-0.84) and (1.99,-0.18) .. (0,0) .. controls (1.99,0.18) and (4.17,0.84) .. (6.56,1.97)   ;
%Straight Lines [id:da6963952686554082] 
\draw    (428.8,113.8) -- (435.27,106.5) ;
\draw [shift={(436.6,105)}, rotate = 131.55] [color={rgb, 255:red, 0; green, 0; blue, 0 }  ][line width=0.75]    (6.56,-1.97) .. controls (4.17,-0.84) and (1.99,-0.18) .. (0,0) .. controls (1.99,0.18) and (4.17,0.84) .. (6.56,1.97)   ;
%Straight Lines [id:da9888592137875983] 
\draw    (390.8,117) -- (385.3,108.67) ;
\draw [shift={(384.2,107)}, rotate = 56.58] [color={rgb, 255:red, 0; green, 0; blue, 0 }  ][line width=0.75]    (6.56,-1.97) .. controls (4.17,-0.84) and (1.99,-0.18) .. (0,0) .. controls (1.99,0.18) and (4.17,0.84) .. (6.56,1.97)   ;
%Straight Lines [id:da7233430929062861] 
\draw    (374.8,150.2) -- (365.4,149.87) ;
\draw [shift={(363.4,149.8)}, rotate = 2.01] [color={rgb, 255:red, 0; green, 0; blue, 0 }  ][line width=0.75]    (6.56,-1.97) .. controls (4.17,-0.84) and (1.99,-0.18) .. (0,0) .. controls (1.99,0.18) and (4.17,0.84) .. (6.56,1.97)   ;
%Straight Lines [id:da27398797812246456] 
\draw    (405.2,190.2) -- (403.22,196.3) ;
\draw [shift={(402.6,198.2)}, rotate = 288] [color={rgb, 255:red, 0; green, 0; blue, 0 }  ][line width=0.75]    (6.56,-1.97) .. controls (4.17,-0.84) and (1.99,-0.18) .. (0,0) .. controls (1.99,0.18) and (4.17,0.84) .. (6.56,1.97)   ;
%Curve Lines [id:da27099481757767907] 
\draw    (413.2,98.7) .. controls (345.4,97.8) and (345.8,199.4) .. (421,202.2) ;
%Curve Lines [id:da08173591942346436] 
\draw    (413.2,98.7) .. controls (464.2,98.6) and (436.2,141.2) .. (462.2,146.2) ;
%Curve Lines [id:da10250315103905783] 
\draw    (421,202.2) .. controls (475.4,202.2) and (500.2,153.2) .. (462.2,146.2) ;

% Text Node
\draw (601,133.4) node [anchor=north west][inner sep=0.75pt]    {$\partial N_{i}$};
% Text Node
\draw (193,140.4) node [anchor=north west][inner sep=0.75pt]    {$K$};
% Text Node
\draw (405,145.4) node [anchor=north west][inner sep=0.75pt]    {$\Omega $};
% Text Node
\draw (395.2,72.4) node [anchor=north west][inner sep=0.75pt]    {$\partial \tilde{\Omega }$};

\end{tikzpicture}
\caption{$K \cap \Omega = \varnothing$.}
\label{fig:2}
\end{figure}

\subsection{Outline of the proof of Theorem \ref{thm:main_compact}}
It is tempting to try to use the maximum principle of \cite{white_maxprinciple} after the pull tight step of the min-max program to ensure that every element in the critical set of a minimizing sequence is contained in $\interior(N)$. However, these elements are only known to be varifolds with bounds on the first variation of mass, meaning that we do not have any information about how they are \emph{oriented} relative to $\partial N$. Since we only require the one-sided bound on $h$, this argument will not work if $|h| > H_{\partial N}$.

Instead, we carry out all of the steps of the min-max program as if we were doing min-max with an obstacle (as in \cite{zhihan}). After proving the existence of a varifold in the critical set of a pulled tight minimizing sequence that is almost minimizing, we can use the regularity for the obstacle problem (see \cite{tamanini}) to show that this varifold is $C^{1,\alpha}$. This regularity is sufficient to ensure that the hypersurface has the correct orientation at the boundary, at which point the classical Hopf lemma for elliptic second order operators applies to rule out boundary touching. Once we know that this hypersurface is contained in $\interior(N)$, the local regularity results from \cite{zhou-zhu_PMC} apply unchanged.

\subsection{Organization of the paper}
In \S\ref{sec:conventions}, we establish the basic notations and conventions for the rest of the paper.

In Part \ref{part:compact}, we develop a localized min-max theory for PMC hypersurfaces in compact manifolds with good boundaries. In \S\ref{sec:compact_setup}-\ref{sec:generic}, we modify the standard min-max machinery to adapt to the setting with boundary. In \S\ref{sec:localization} we develop a new localization argument essential for the results in noncompact manifolds.

In Part \ref{part:noncompact}, we use the theory from Part \ref{part:compact} to prove a localized min-max theorem for PMC hypersurfaces in complete noncompact manifolds. In \S\ref{sec:localization_noncompact} we prove the main general theorem of the paper. In \S\ref{sec:app1} we apply this theory to the problem of PMC hypersurfaces in Euclidean space. In \S\ref{sec:app2} we apply this theory to the problem of CMC hypersurfaces in finite volume manifolds.

\begin{acknowledgements}
    I am grateful to Fernando Cod{\'a} Marques for an insightful discussion about this work.
\end{acknowledgements}

\section{Conventions and notation}\label{sec:conventions}
Let $(M, g)$ be a complete Riemannian manifold of dimension $n+1$. Let $N\subset M$ be a compact domain with smooth boundary.

\subsection{Basic notations}
We denote
\begin{itemize}
    \item $B_r(x) = \{y \in M \mid \mathrm{dist}_g(x, y) < r\}$,
    \item $B_r^N(x) = B_r(x) \cap N$,
    \item $A_{s,r}(x) = \{y \in M \mid s < \mathrm{dist}_g(x,y) < r\}$,
    \item $A_{s,r}^N(x) = A_{s,r}(x) \cap N$,
    \item $\cH^k$ is the $k$-dimensional Hausdorff measure on $(M, g)$,
    \item $\interior(A)$ for $A \subset M$ is the union of all open sets $W$ in $M$ satisfying $W \subset A$.
\end{itemize}
We say $U \subset N$ is \emph{relatively open} if there is an open set $\tilde{U} \subset M$ with $\tilde{U} \cap N = U$.
\begin{itemize}
    \item $\intrel(A)$ for $A \subset N$ is the union relatively open sets $U$ in $N$ satisfying $U \subset A$.
\end{itemize}

\subsection{Varifolds}
We denote
\begin{itemize}
    \item $\mathcal{V}_k(M)$ is the space of $k$-varifolds in $M$,
    \item $\mathcal{V}_k(N)$ is the space of $k$-varifolds in $M$ with support in $N$,
    \item $\mathbf{F}$ is the metric on $\mathcal{V}_k(M)$ given by
    \[ \mathbf{F}(V_1, V_2) = \sup\{V_1(f) - V_2(f)\mid f \in C^{0,1}_{\text{cpt}}(\mathrm{Gr}_k(M)),\ |f| \leq 1,\ \mathrm{Lip}(f) \leq 1\}. \]
    \item $\|V\|$ is the associated Radon measure on $M$ for $V \in \mathcal{V}_k(M)$,
    \item $|\Sigma|$ is the integral $k$-varifold associated to $\Sigma$ an immersed $k$-dimensional submanifold,
    \item $\|\Sigma\|$ is the associated Radon measure on $M$ for the $k$-varifold $|\Sigma|$, where $\Sigma$ is an immersed $k$-dimensional submanifold,
    \item $\delta V(X)$ is the first variation of the mass of $V \in \mathcal{V}_k(M)$ under the flow generated by the vector field $X$,
    \item $\mathrm{VarTan}(V, x)$ is the set of varifold tangent cones at $x \in M$ for $V$ a $k$-varifold with bounded first variation.
\end{itemize}
Let $\bar{\nu}$ denote the unit normal vector field along $\partial N$ pointing into $N$. Let $U \subset N$ be relatively open. We denote
\begin{itemize}
    \item $\mathcal{X}_{\mathrm{cpt}}(U)$ is the space of smooth vector fields with compact support in $U$,
    \item $\mathcal{X}_{\mathrm{cpt}}^+(U) = \{X \in \mathcal{X}_{\mathrm{cpt}}(U) \mid \langle X, \bar{\nu}\rangle \geq 0\}$.
\end{itemize}
Fix $c > 0$.

\begin{definition}[Bounded constrained first variation]
    $V \in \mathcal{V}_k(N)$ has \emph{$c$-bounded constrained first variation} in $U$ if
    \[ \delta V(X) \geq -c\int_U |X|\ d\|V\| \]
    for all $X \in \mathcal{X}_{\text{cpt}}^+(U)$.
\end{definition}

\begin{definition}[Bounded first variation]
    $V \in \mathcal{V}_k(N)$ has \emph{$c$-bounded first variation} in $U$ if there is an open set $\tilde{U} \subset M$ satisfying $\tilde{U} \cap N = U$ and
    \[ |\delta V(X)| \leq c\|V\|(U)\|X\|_{C^0} \]
    for all $X \in \mathcal{X}_{\text{cpt}}(\tilde{U})$.
\end{definition}

The following lemma is an adaptation of \cite[Lemma 2.6]{zhihan}.

\begin{lemma}\label{lem:constrained_variation}
    Let $V \in \mathcal{V}_k(N)$ have $c$-bounded constrained first variation in $N$. Then there is a constant $C = C(N, c) < \infty$ so that $V$ has $C$-bounded first variation in $N$.
\end{lemma}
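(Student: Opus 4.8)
The plan is to convert the one-sided ($c$-bounded) constrained first variation estimate over the admissible cone $\mathcal{X}_{\mathrm{cpt}}^+(N)$ into a genuine two-sided bound over all of $\mathcal{X}_{\mathrm{cpt}}(\tilde U)$ for a suitable open neighborhood $\tilde U$ of $N$. The key geometric input is that near $\partial N$ one can always decompose a vector field into a tangential part (which preserves the cone condition and its negative) and a normal part, and that the constraint only fails to be automatically satisfied in the inward-normal direction near the boundary. First I would fix a collar neighborhood of $\partial N$ in $M$, say $\partial N \times (-2\delta, 2\delta)$ with $\partial N \times \{t \leq 0\}$ outside $N$, via the normal exponential map of $\partial N$, and a cutoff function $\chi$ equal to $1$ on the collar $\{0 \le t < \delta\}$ and supported in $\{t < 2\delta\}$.

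The core estimate: given $X \in \mathcal{X}_{\mathrm{cpt}}(\tilde U)$ with $\|X\|_{C^0} \le 1$, both $X$ and $-X$ satisfy $\delta V(\pm X) \ge -c\|V\|(N) - (\text{correction})$ once we push $\pm X$ into the admissible cone. Away from the collar, any vector field is admissible (since $\mathcal{X}_{\mathrm{cpt}}^+$ only constrains the sign of $\langle X,\bar\nu\rangle$ on $\partial N$, and interior fields have $\langle X,\bar\nu\rangle$ irrelevant), so there $\pm X \in \mathcal{X}_{\mathrm{cpt}}^+$ directly. Inside the collar, write $X = X^\top + f\,\bar\nu$ where $\bar\nu$ is extended along the collar and $f = \langle X,\bar\nu\rangle$; then $X^\top \pm \|f\|_{C^0}\,\chi\,\bar\nu_{\text{ext}}$ has nonnegative inward-normal component on $\partial N$, hence lies in the cone. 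Applying the hypothesis to these modified fields and to their negatives, and using that $\|V\|$ is a Radon measure so $\int_N |\bar\nu_{\text{ext}}|\,d\|V\| \le \|V\|(N) < \infty$, yields $|\delta V(X)| \le C(N,c)\,\|V\|(N)$, with $C$ depending only on $c$, the geometry of the collar (the Lipschitz constant of $\bar\nu_{\text{ext}}$, controlled by the second fundamental form of $\partial N$), and $\delta$. One subtlety is that we need $\|V\|$ to have no concentration issue at $\partial N$ — but this is automatic since $V \in \mathcal{V}_k(N)$ has support in the compact set $N$ and $\|V\|(N) < \infty$ by finiteness of mass (which follows from the $c$-bounded constrained first variation hypothesis together with, e.g., the monotonicity-type arguments, or is simply part of the standing assumption that these are finite-mass varifolds).

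The main obstacle is making the decomposition $X = X^\top + f\bar\nu$ and the cutoff correction globally well-defined while keeping the $C^0$ norm of the correction field controlled by $\|X\|_{C^0}$: one must check that extending $\bar\nu$ off $\partial N$ (as the gradient of the signed distance to $\partial N$, which is smooth on the collar) and multiplying by $\chi f$ does not blow up the $C^0$ norm — it does not, since $|\bar\nu_{\text{ext}}| = 1$ on the collar and $|\chi| \le 1$, $|f| \le \|X\|_{C^0}$. A second point to be careful about is that the correction field must still have compact support in an open set $\tilde U \supset N$ in $M$; since the collar extends slightly outside $N$ and $\chi$ is compactly supported there, this is arranged. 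Finally, one invokes the definition of $c$-bounded first variation with this $\tilde U$. I expect the whole argument to be short, closely following \cite[Lemma 2.6]{zhihan}, with the only new bookkeeping being the PMC constant $c$ appearing on the right-hand side (rather than a sharp first-variation bound), which is harmless.
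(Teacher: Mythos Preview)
Your approach is essentially the paper's: decompose $X$ into a tangential piece $X_0 = X - \chi\langle X,\bar\nu\rangle\bar\nu$ (with $\pm X_0$ both admissible) plus a normal piece, and control the normal piece via the fixed reference field $\chi\bar\nu$, whose $C^1$ norm furnishes the geometric constant. One claim needs correction: $X^\top - \|f\|_{C^0}\chi\bar\nu$ does \emph{not} lie in the cone, since its inward-normal component on $\partial N$ is $-\|f\|_{C^0}\le 0$; the one-sided hypothesis alone cannot bound $\delta V$ of fields pointing outward. The paper's resolution (which your mention of the Lipschitz constant of $\bar\nu_{\mathrm{ext}}$ suggests you already anticipate) is to write $\chi f\bar\nu = \chi\bar\nu - \chi(1-f)\bar\nu$ and $\chi f\bar\nu = -\chi\bar\nu + \chi(1+f)\bar\nu$: in each expression the second term is admissible (since $1\pm f\ge 0$ from $\|X\|_{C^0}\le 1$) and the first is a \emph{fixed} field with $|\delta V(\chi\bar\nu)|\le \|\chi\bar\nu\|_{C^1}\|V\|(N)$ by direct estimate, yielding $C = 3c + \|\chi\bar\nu\|_{C^1}$.
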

\begin{proof}
    Let $r > 0$ small enough so that the normal exponential map along $\partial N$ is a diffeomorphism. Extend $\bar{\nu}$ in $B_r(\partial N)$ by $\nabla_{\bar{\nu}}\bar{\nu} = 0$. Let $\eta \in C_{\text{cpt}}^1(B_r(\partial N), [0, 1])$ so that $\eta \equiv 1$ in a neighborhood of $\partial N$.

    Let $X \in \mathcal{X}_{\text{cpt}}(M)$ satisfy $\|X\|_{C^0} = 1$ (without loss of generality). Set $X_0 = X - \eta\langle X, \bar{\nu}\rangle \bar{\nu}$. By construction, $\pm X_0\mid_N \in \mathcal{X}_{\text{cpt}}^+(U)$, $1 \pm \langle X, \bar{\nu}\rangle \geq 0$, and $|X_0| \leq |X| \leq 1$. We compute
    \begin{align*}
        \delta V(X)
        & = \int (-\mathrm{div}_S(-X_0) + \mathrm{div}_S(\eta\langle X, \bar{\nu}\rangle\bar{\nu}))\ dV(x, S)\\
        & \leq c\|V\|(N) - \int (\mathrm{div}_S(\eta(1-\langle X, \bar{\nu}\rangle)\bar{\nu}) + \mathrm{div}_S(\eta\bar{\nu}))\ dV(x, S)\\
        & \leq c\|V\|(N) + c\|V\|(N)\|\eta(1-\langle X, \bar{\nu}\rangle)\|_{C^0} + \|\eta \bar{\nu}\|_{C^1}\|V\|(N)\\
        & \leq (3c+\|\eta \bar{\nu}\|_{C^1})\|V\|(N),
    \end{align*}
    and
    \begin{align*}
        \delta V(X)
        & = \int (\mathrm{div}_SX_0 + \mathrm{div}_S(\eta\langle X, \bar{\nu}\rangle\bar{\nu}))\ dV(x, S)\\
        & \geq -c\|V\|(N) + \int (\mathrm{div}_S(\eta(1+\langle X, \bar{\nu}\rangle)\bar{\nu}) - \mathrm{div}_S(\eta\bar{\nu}))\ dV(x, S)\\
        & \geq -c\|V\|(N) - c\|V\|(N)\|\eta(1+\langle X, \bar{\nu}\rangle)\|_{C^0} - \|\eta \bar{\nu}\|_{C^1}\|V\|(N)\\
        & \geq -(3c+\|\eta \bar{\nu}\|_{C^1})\|V\|(N).
    \end{align*}
    The conclusion follows by taking $C = 3c + \|\eta \bar{\nu}\|_{C^1}$.
\end{proof}

\subsection{Currents}
We denote
\begin{itemize}
    \item $\mathbf{I}_k(M)$ is the space of $k$-dimensional integral currents in $M$,
    \item $\mathbf{I}_k(N)$ is the space of $k$-dimensional integral currents in $M$ with support in $N$,
    \item $[\Sigma]$ is the $k$-dimensional integral current associated to an oriented immersed $k$-dimensional submanifold of $M$,
    \item $|T|$ is the associated integral $k$-varifold for $T \in \mathbf{I}_k(M)$,
    \item $\|T\|$ is the associated Radon measure on $M$ for $T \in \mathbf{I}_k(M)$.
\end{itemize}
The space of integral currents has various norms and metrics:
\begin{itemize}
    \item $\mathbf{M}$ is the mass norm on $\mathbf{I}_k(M)$,
    \item $\mathcal{F}$ is the flat norm on $\mathbf{I}_k(M)$,
    \item $\mathbf{F}(T, S)$ for $T, S \in \mathbf{I}_k(M)$ is given by
    \[ \mathbf{F}(T, S) = \mathcal{F}(T - S) + \mathbf{F}(|T|, |S|). \]
\end{itemize}

\subsection{Finite perimeter sets}
We denote
\begin{itemize}
    \item $\mathcal{C}(M)$ is the collection of sets of finite perimeter in $M$,
    \item $\partial^*\Omega$ is the reduced boundary for $\Omega \in \mathcal{C}(M)$,
    \item $|\partial^*\Omega|$ is the integral $n$-varifold associated to $\partial^*\Omega$,
    \item $[\partial^*\Omega]$ is the $n$-dimensional integral current associated to $\partial^*\Omega$ for $\Omega \in \mathcal{C}(M)$.
\end{itemize}
$\mathcal{C}(M)$ naturally inherits 3 metrics from $\mathbf{I}_n(M)$:
\begin{itemize}
    \item $\mathbf{M}([\partial^*\Omega])$,
    \item $\mathcal{F}([\partial^*\Omega])$,
    \item $\mathbf{F}([\partial^*\Omega_1], [\partial^*\Omega_2])$.
\end{itemize}
By definition, we have
\begin{equation}\label{eqn:metrics}
    \mathcal{F}([\partial^*\Omega]) \leq \mathbf{F}([\partial^*\Omega], 0)\ \ \text{and}\ \ \mathcal{F}([\partial^*\Omega]) \leq \mathbf{M}([\partial^*\Omega]).
\end{equation}
We recall the following fact when $M$ is closed, which can be found for instance in \cite[Lemma 3.4]{inauen-marchese}.

\begin{lemma}\label{lem:flat_dominates_L1}
    There is a constant $\eps_0 > 0$ (depending on $M$ closed) so that if $\mathcal{F}([\partial^*\Omega]) \leq \eps_0$ and $\cH^{n+1}(\Omega) \leq \frac{1}{2}\cH^{n+1}(M)$, then $\cH^{n+1}(\Omega) = \mathcal{F}([\partial^* \Omega])$.
\end{lemma}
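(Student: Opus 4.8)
The plan is to exploit the isoperimetric inequality on the closed manifold $M$ together with the basic compactness/structure theory of finite perimeter sets, reducing everything to the statement that a small flat norm forces the set $\Omega$ itself (not just $\partial^*\Omega$) to be small in measure, at which point the ``$\min(\cH^{n+1}(\Omega), \cH^{n+1}(\Omega^c))$'' branch of the isoperimetric inequality kicks in and pins $\cH^{n+1}(\Omega)$ to $\mathcal F([\partial^*\Omega])$. Concretely, recall that $\mathcal F([\partial^*\Omega])$ is the flat norm of the boundary current, so by definition of the flat norm there is a decomposition $[\partial^*\Omega] = \partial R + S$ with $R \in \mathbf I_{n+1}(M)$, $S \in \mathbf I_n(M)$ and $\mathbf M(R) + \mathbf M(S) \le \mathcal F([\partial^*\Omega]) + \text{(arbitrarily small)}$. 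Since $M$ is closed of dimension $n+1$, the only $(n+1)$-currents are integer multiples of the fundamental class on each connected component; comparing $[\partial^*\Omega] = \partial[\Omega]$ with $\partial R$ and using that $S$ is small, one sees that $R$ must agree with $[\Omega]$ up to a controlled error, so $\mathbf M([\Omega]) = \cH^{n+1}(\Omega)$ is controlled by $\mathcal F([\partial^*\Omega])$ plus a small error — in particular $\cH^{n+1}(\Omega)$ is small once $\mathcal F([\partial^*\Omega]) \le \eps_0$.

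Once $\cH^{n+1}(\Omega)$ is known to be small (here is where the hypothesis $\cH^{n+1}(\Omega) \le \tfrac12 \cH^{n+1}(M)$ is used, to select the correct isoperimetric regime and to rule out the complementary component being the small one), I would invoke the isoperimetric inequality on $M$: there is a constant such that $\mathrm{Per}(\Omega) \ge c_M\, \cH^{n+1}(\Omega)^{\frac{n}{n+1}}$ for $\cH^{n+1}(\Omega) \le \tfrac12\cH^{n+1}(M)$. Combined with the previous step, $\mathrm{Per}(\Omega) = \mathbf M([\partial^*\Omega]) \ge c_M \cH^{n+1}(\Omega)^{n/(n+1)}$, and since $\cH^{n+1}(\Omega)$ is small the exponent $n/(n+1) < 1$ makes $\cH^{n+1}(\Omega)^{n/(n+1)}$ dominate $\cH^{n+1}(\Omega)$; feeding this back shows the error term in the first step is in fact absorbed and one gets the exact identity $\cH^{n+1}(\Omega) = \mathcal F([\partial^*\Omega])$. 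The cleanest way to see the exact equality, rather than just a comparable bound, is to argue directly: the flat norm of $[\partial^*\Omega] = \partial[\Omega]$ is at most $\mathbf M([\Omega]) = \cH^{n+1}(\Omega)$ (take $R = [\Omega]$, $S = 0$), giving $\mathcal F([\partial^*\Omega]) \le \cH^{n+1}(\Omega)$ for free; for the reverse inequality, any competitor decomposition $\partial[\Omega] = \partial R + S$ with $\mathbf M(R)+\mathbf M(S)$ small forces, via the constancy theorem on the closed manifold $M$ plus the smallness of all the masses involved, that $R = [\Omega]$ exactly and $S = 0$, so $\mathcal F([\partial^*\Omega]) = \cH^{n+1}(\Omega)$.

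The main obstacle is the middle step: upgrading ``$\mathcal F([\partial^*\Omega])$ small'' to ``$R = [\Omega]$ with no error'' in the optimal flat decomposition. One must rule out the possibility that the mass of $\partial^*\Omega$ is shuffled partly into the $n$-dimensional piece $S$; this is exactly where the isoperimetric inequality on $M$ is essential, since it prevents $\Omega$ from having small volume but comparatively large (or spread-out) perimeter, and it is where the quantitative threshold $\eps_0 = \eps_0(M)$ comes from — $\eps_0$ must be small enough that $c_M \cH^{n+1}(\Omega)^{n/(n+1)}$ strictly exceeds any amount of mass that could be hidden in $S$, which forces $S=0$ and hence the constancy theorem gives $R = [\Omega]$. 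I expect the remaining ingredients (the constancy theorem, lower semicontinuity of mass, the isoperimetric inequality on a closed manifold) to be completely standard, so the write-up should be short; the reference to \cite[Lemma 3.4]{inauen-marchese} presumably carries out exactly this argument, and I would cite it for the details while recording the statement in the form needed here.
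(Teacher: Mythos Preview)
The paper does not give a proof; it simply records the statement and cites \cite[Lemma~3.4]{inauen-marchese}. Since you also intend to cite that reference for the details, your proposal matches the paper's treatment at the level of what actually appears.

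Your sketch of the underlying argument, however, has a real gap. First, it is not true that ``the only $(n+1)$-currents are integer multiples of the fundamental class on each connected component'': integral $(n+1)$-currents on a closed $(n+1)$-manifold correspond to arbitrary integer-valued BV functions, and the constancy theorem gives your conclusion only for \emph{cycles}. So you cannot deduce directly that $R$ agrees with $[\Omega]$ up to controlled error. Second, and more seriously, the isoperimetric inequality is applied in the wrong place: a lower bound on $\mathrm{Per}(\Omega) = \mathbf{M}([\partial^*\Omega])$ says nothing about the flat-norm decomposition, and your ``feeding back'' step does not go through as written. The clean argument applies the Federer--Fleming isoperimetric inequality to the current $S = \partial([\Omega]-R)$: it bounds an integral $(n+1)$-current $Q$ with $\mathbf{M}(Q) \le C_M \mathbf{M}(S)^{(n+1)/n}$; then $[\Omega]-R-Q$ is a top-dimensional cycle, hence equals $k[M]$ by constancy, and smallness of all masses forces $k=0$. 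This yields $\cH^{n+1}(\Omega) \le \mathbf{M}(R) + C_M\mathbf{M}(S)^{(n+1)/n}$, so $\mathbf{M}(R)+\mathbf{M}(S) \ge \cH^{n+1}(\Omega) + \mathbf{M}(S)\bigl(1 - C_M\mathbf{M}(S)^{1/n}\bigr) \ge \cH^{n+1}(\Omega)$ once $\mathbf{M}(S) \le C_M^{-n}$, which combined with the trivial upper bound gives the exact equality.
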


By \eqref{eqn:metrics} and Lemma \ref{lem:flat_dominates_L1}, convergence in any of the above metrics implies $L^1$ convergence of the indicator functions of the finite perimeter sets.

\subsection{Hypersurfaces and mean curvature}
Let $\phi : \Sigma \to (M, g)$ be a two-sided immersed hypersurface, and let $\nu$ be a continuous choice of unit normal vector field along $\Sigma$. We let $H_{\Sigma, \nu}$ denote the mean curvature of $H$ with respect to $\nu$. To specify our convention, for $\Sigma = \partial B_1^{\R^{n+1}}(0)$ and $\nu(x) = x$ (the outward pointing unit normal vector field for $B_1^{\R^{n+1}}(0)$), we have $H_{\Sigma, \nu} = n$. In this convention, for a smooth vector field $X$ on $M$, we have
\[ \delta |\Sigma|(X) = \int_{\Sigma} \langle X, H_{\Sigma, \nu}\nu\rangle d\|\Sigma\|. \]
If $\phi(\Sigma) = \partial \Omega$, we write $H_{\Sigma, \Omega}$ to mean $H_{\Sigma, \nu}$ for $\nu$ the outward pointing unit normal vector field along $\Sigma$.

\begin{definition}[Almost embedding]
    $\Sigma$ is \emph{almost embedded} if for any $p \in \phi(\Sigma)$ where $\phi$ fails to be an embedding, there is a neighborhood $W$ of $p$ in $M$ so that $\Sigma \cap \phi^{-1}(W) = \sqcup_{i=1}^N \Sigma_i$, $\phi\mid_{\Sigma_i}$ is an embedding for all $i$, and for each $i \neq j$, $\phi(\Sigma_i)$ lies on one side of $\phi(\Sigma_j)$ in $W$.
\end{definition}

\begin{definition}[Prescribed/constant mean curvature]
    For $h \in C^{\infty}_{\text{loc}}(M)$, a two-sided immersion is an \emph{$h$-PMC hypersurface} if there is a choice of continuous unit normal vector field along $\Sigma$ satisfying $H_{\Sigma, \nu} = h\mid_{\Sigma}$. If $h \equiv c$ constant, then $\Sigma$ is a \emph{$c$-CMC} hypersurface.
\end{definition}

\subsection{Second variation and index}
If $\Sigma = \partial \Omega$ is an $h$-PMC hypersurface, then $\Sigma$ is a critical point\footnote{Just like for the area functional, we can make sense of being a critical point over compactly supported variations even if the functional itself is undefined for $\Omega$.} for the functional
\[ \cA^h(\Omega') = \cH^{n}(\partial^*\Omega') - \int_{\Omega'} h. \]
Let $\nu$ be the unit normal vector field along $\Sigma$ pointing out of $\Omega$. The second variation of $\cA^h$ for $\Sigma$ for any compactly supported normal variation $\varphi \nu$ is given by (see \cite[(1.3)]{zhou_mult})
\begin{equation}\label{eqn:second_variation}
    Q_{\Sigma}^h(\varphi, \varphi) = \int_\Sigma (|\nabla^{\Sigma} \varphi|^2 - (\mathrm{Ric}_M(\nu, \nu) + |A_{\Sigma}|^2 + \partial_\nu h)\varphi^2),
\end{equation}
where $\mathrm{Ric}_M$ is the Ricci curvature of $(M, g)$ and $A_{\Sigma}$ is the second fundamental form of the immersion $\Sigma$.

\begin{definition}[Stability]
    Let $U \subset M$ open. $\Sigma$ is \emph{$\cA^h$-stable} in $U$ if $Q_{\Sigma}^h$ is positive semi-definite on $C^{\infty}_{\text{cpt}}(U)$ (otherwise $\Sigma$ is \emph{$\cA^h$-unstable}). $\Sigma$ is \emph{strictly $\cA^h$-stable} in $U$ if $Q_{\Sigma}^h$ is positive definite on $C^{\infty}_{\text{cpt}}(U)$.
\end{definition}

\begin{definition}[Index]
    Let $U \subset M$ open. We define
    \[ \mathrm{index}_{\cA^h, U}(\Sigma) = \sup\{\mathrm{dim}(L) \mid L \text{\ is a subspace of\ } C^{\infty}_{\text{cpt}}(U),\ Q_{\Sigma}^h\mid_L \text{is negative definite}\}. \]
    If $U = M$, we omit $U$ from the subscript.
\end{definition}

\begin{definition}[Nondegenerate]
    Suppose $\Sigma$ is closed. $\Sigma$ is \emph{$\cA^h$-nondegenerate} if the operator
    \[ \mathcal{L}_{\Sigma}^h = -\Delta_{\Sigma} - (\mathrm{Ric}_M(\nu, \nu) + |A_{\Sigma}|^2 + \partial_\nu h) \]
    has trivial kernel\footnote{If $\Sigma$ is nondegenerate and stable, then $\Sigma$ is strictly stable.}.
\end{definition}

\part{Compact Manifolds with Barrier}\label{part:compact}

In this part of the paper, we develop the tools for a localized min-max theorem in compact manifolds whose boundary provides a good barrier.

\section{Setup}\label{sec:compact_setup}

Let $(N, \partial N, g)$ be a smooth compact Riemannian manifold with smooth boundary. We always assume that the dimension of $N$ is $3 \leq n+1 \leq 7$.

Let $(\tilde{N}, \tilde{g})$ be a smooth closed Riemannian manifold of dimesion $n+1$ which contains $(N, \partial N, g)$ isometrically.

\subsection{Space of subsets}\label{subsec:subsets}
We fix a disjoint decomposition by connected components $\tilde{N} \setminus N = E_+ \sqcup E_-$ and let $P_{\pm} = \partial E_{\pm}$.

We let $\cCEE(N)$ denote the set of all $\Omega \in \mathcal{C}(\tilde{N})$ satisfying
\[ E_+ \subset \Omega\ \ \text{and}\ \ E_- \cap \Omega = \varnothing. \]
Note that $\cCEE(N)$ naturally inherits any metric or topology on $\mathcal{C}(\tilde{N})$.

\subsection{Functional}\label{subsec:prescription}
Fix $h \in C^{\infty}(N)$. In an abuse of notation, we also let $h$ denote a fixed smooth extension of $h$ to $\tilde{N}$. We let $c := \sup_{\tilde{N}} |h|$. For $\Omega \in \cCEE(N)$, we define
\[ \cA^h(\Omega) = \cH^n(\partial^*\Omega) - \int_{\Omega} h. \]

\subsection{Barrier}
We define what it means for $\partial N$ to be a good barrier for min-max constructions for $\cA^h$ over $\cCEE(N)$.

\begin{definition}
    $\partial N$ is a \emph{barrier} for $(h, P_+, P_-)$ if
    \[ H_{P_+, E_+} < h\mid_{P_+}\ \ \text{and}\ \ H_{P_-, E_-} < -h\mid_{P_-}. \]
\end{definition}

\subsection{Min-max setup}\label{subsec:min-max_setup}
Let $X^k$ be a connected cubical complex of dimension $k$ in some $I^m = [0, 1]^m$. Let $Z \subset X$ be a nonempty cubical subcomplex.

Let $\Phi_0 : X \to \cCEE(N)$ be a continuous map in the $\mathbf{F}$-topology\footnote{We can assume, without loss of generality by \cite[Remarks 15.2 and 15.3]{maggi}, that $\partial\Phi_0(x) = \overline{\partial^*\Phi_0(x)}$ for all $x \in X$.}.

\begin{definition}[Homotopy class]
    Let $\Pi$ be the set of all sequences of $\mathbf{F}$-continuous maps $\{\Phi_i : X \to \cCEE(N)\}_i$ so that there are $\mathcal{F}$-continuous homotopies $\{\Psi_i : [0, 1] \times X \to \cCEE(N)\}_i$ satisfying $\Psi_i(0, \cdot) = \Phi_i$, $\Psi_i(1, \cdot) = \Phi_0$, and
    \[ \limsup_{i\to\infty} \sup\{\mathbf{F}(\Psi_i(t, z), \Phi_0(z)) \mid t \in [0, 1],\ z \in Z\} = 0. \]
    Such a sequence $\{\Phi_i\}$ is called an \emph{$(X, Z)$-homotopy sequence of mappings to $\cCEE(N)$}, and $\Pi$ is called the \emph{$(X, Z)$-homotopy class of $\Phi_0$}.
\end{definition}

\begin{definition}[Width]
    The \emph{$h$-width} of $\Pi$ is
    \[ \mathbf{L}^h(\Pi) = \inf_{\{\Phi_i\} \in \Pi} \limsup_{i \to \infty} \sup_{x \in X} \cA^h(\Phi_i(x)). \]
\end{definition}

\begin{definition}[Minimizing sequence]
    We say $\{\Phi_i\}_i \in \Pi$ is a \emph{minimizing sequence} if
    \[ \limsup_{i \to \infty} \sup_{x \in X} \cA^h(\Phi_i(x)) = \mathbf{L}^h(\Pi). \]
\end{definition}

\begin{definition}[Critical set]
    The \emph{critical set} of a minimizing sequence $\{\Phi_i\}_i \in \Pi$ is
    \[ \mathbf{C}(\{\Phi_i\}) = \left\{V \in \mathcal{V}_n(N) \mid V = \lim_{j\to \infty} |\partial^*\Phi_{i_j}(x_j)|,\ \lim_{j \to \infty} \cA^h(\Phi_{i_j}(x_j)) = \mathbf{L}^h(\Phi)\right\}. \]
\end{definition}

\section{Preliminary tools}
Here we collect the standard facts required for the min-max construction.

\subsection{Local regularity for obstacle problem}

We recall that the dimension of $N$ is assumed to be $3 \leq n+1 \leq 7$.

\begin{theorem}\label{thm:obstacle}
    Let $U \subset N$ be relatively open. Suppose $\Omega \in \cCEE(N)$ minimizes $\cA^h$ in $\{\Omega' \in \mathcal{C}(\tilde{N}) \mid \Omega \triangle \Omega' \subset\subset U\} \subset \cCEE(N)$. Then $\partial^*\Omega \cap U = \partial \Omega \cap U$ is a $C^{1,\alpha}$ embedded hypersurface in $U$.
\end{theorem}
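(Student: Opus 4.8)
\textbf{Proof plan for Theorem \ref{thm:obstacle}.}

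The plan is to reduce the constrained minimization problem to a one-sided obstacle problem for an almost minimizer of the perimeter and then invoke the regularity theory of Tamanini. First I would localize: fix a point $p \in \partial^*\Omega \cap U$ and a small relatively open ball $B = B_\rho^N(p) \subset\subset U$ on which the prescription term is a bounded perturbation. Since $\Omega$ minimizes $\cA^h$ among competitors with $\Omega \triangle \Omega' \subset\subset U$, for any such competitor we have $\cH^n(\partial^*\Omega \cap B) \le \cH^n(\partial^*\Omega' \cap B) + \int_{\Omega \triangle \Omega'} |h| \le \cH^n(\partial^*\Omega' \cap B) + c\,\cH^{n+1}(\Omega \triangle \Omega')$, and since $\Omega \triangle \Omega' \subset B$ we can bound $\cH^{n+1}(\Omega \triangle \Omega') \le C_n \rho\, \cH^n(\partial^*(\Omega \triangle \Omega') \cap B)$ by the relative isoperimetric inequality, or more simply absorb it into the standard ``$(\Lambda, r_0)$-minimizer'' framework: $\Omega$ is a $(\Lambda, r_0)$-perimeter minimizer in $B$ with $\Lambda = c$ among the constrained competitors. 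The one and only constraint is the two-sided membership condition defining $\cCEE(N)$, namely $E_+ \subset \Omega'$ and $E_- \cap \Omega' = \varnothing$.

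The key observation is that this constraint is \emph{inactive} in the interior: for $p \in \interior(N)$ we may take $\rho$ small enough that $B \cap (E_+ \cup E_-) = \varnothing$, so every competitor supported in $B$ automatically lies in $\cCEE(N)$, and $\Omega$ is an unconstrained $(\Lambda, r_0)$-minimizer of perimeter in $B$; the classical regularity theory (De Giorgi, or \cite{tamanini}) then gives that $\partial^*\Omega \cap B$ is $C^{1,\alpha}$ and $(\partial \Omega \setminus \partial^*\Omega) \cap B$ has Hausdorff dimension at most $n-7$, hence is empty since $n+1 \le 7$. The remaining case is $p \in \partial N$, i.e.\ $U$ meets $\partial N$; here one component of $\tilde N \setminus N$, say $E_-$ (or $E_+$), borders $p$, and the membership condition becomes a genuine one-sided obstacle: $\Omega' \subset \tilde N \setminus E_-$, i.e.\ the ``free'' part of $\partial^*\Omega$ is constrained to lie on the $N$-side of the fixed smooth hypersurface $P_- = \partial E_-$. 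This is exactly the setting of the obstacle problem for sets of finite perimeter, and I would cite Tamanini's regularity theorem: an almost minimizer of perimeter subject to a smooth one-sided obstacle has $C^{1,\alpha}$ reduced boundary, the full topological boundary coincides with the reduced boundary away from a set of dimension $\le n-7$ (again empty in our range), and the free boundary detaches from the obstacle in a $C^{1,\alpha}$ fashion. Thus $\partial \Omega \cap U = \partial^*\Omega \cap U$ is $C^{1,\alpha}$ embedded.

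The main obstacle I anticipate is not the interior regularity (which is classical) but verifying cleanly that the constrained minimization in $\cCEE(N)$ translates into Tamanini's hypotheses at the boundary: one must check that competitors are genuinely unrestricted except for the single smooth obstacle $P_\pm$, that the prescription term $-\int_{\Omega'} h$ only contributes a bounded volume perturbation (so the almost-minimality exponent is the harmless one, giving $C^{1,\alpha}$ for every $\alpha < 1$ after a further bootstrap is \emph{not} needed here — $C^{1,\alpha}$ for some $\alpha$ suffices), and that the two components $E_+$ and $E_-$ cannot both border the same boundary point, so only one obstacle is ever active locally. Once these bookkeeping points are in place, the statement follows directly by quoting \cite{tamanini}; I would not expect to reproduce the De Giorgi–Tamanini regularity machinery itself.
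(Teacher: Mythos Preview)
Your plan is correct and matches the paper's proof: reduce to almost-minimality of perimeter with a smooth one-sided obstacle and quote Tamanini. The only simplification the paper makes is that you need not invoke the relative isoperimetric inequality at all---since $\Omega \triangle \Omega' \subset B_r(x)$, the crude bound $\int_{\Omega \triangle \Omega'} |h| \le c\,\cH^{n+1}(B_r(x)) \le c\omega r^{n+1}$ already gives the almost-minimizing deviation in exactly the form Tamanini requires.
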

\begin{proof}
    Let $\Omega' \subset \mathcal{C}(\tilde{N})$ satisfy $\Omega \triangle \Omega' \subset B_r(x)\cap N \subset U$. Then
    \begin{align*}
        \cH^n(\partial^*\Omega) - \cH^n(\partial^*\Omega')
        & = \cA^h(\Omega) - \cA^h(\Omega') + \int_{\Omega} h - \int_{\Omega'} h\\
        & \leq \int_{\Omega \setminus \Omega'} h - \int_{\Omega' \setminus \Omega} h\\
        & \leq c\cH^{n+1}(B_r(x))\\
        & \leq c\omega r^{n+1},
    \end{align*}
    where $\omega$ is a constant depending on $(N, g)$ and $r_0$ for all $r < r_0$. In other words, $\Omega$ is almost minimizing with respect to the obstacle $\partial N$. Hence, the arguments of \cite[\S3]{tamanini} apply to give the desired regularity (working only in the assumed dimensions).
\end{proof}

When $\partial N$ is a barrier for $(h, P_+, P_-)$, we use the maximum principle and the above $C^{1, \alpha}$ regularity to show that local $\cA^h$-minimizers lie in $\interior(N)$.

\begin{corollary}\label{cor:local_reg}
    Let $U \subset N$ be relatively open. Suppose $\Omega \in \cCEE(N)$ minimizes $\cA^h$ in $\{\Omega' \in \mathcal{C}(\tilde{N}) \mid \Omega \triangle \Omega' \subset\subset U\} \subset \cCEE(N)$. Further suppose that $\partial N$ is a barrier for $(h, P_+, P_-)$. Then $\partial^*\Omega \cap U = \partial \Omega \cap U$ is a smooth embedded $\cA^h$-stable hypersurface in $U \cap \interior(N)$ with $H_{\partial \Omega, \Omega} = h\mid_{\partial \Omega}$.
\end{corollary}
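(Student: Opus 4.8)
The plan is to combine the $C^{1,\alpha}$ regularity of Theorem \ref{thm:obstacle} with the barrier hypothesis and a Hopf-type maximum principle to first push $\partial\Omega$ off of $\partial N$, and then upgrade to smoothness and stability by invoking the interior theory once we are working strictly inside $N$. First I would record that, by Theorem \ref{thm:obstacle}, $\partial^*\Omega \cap U = \partial\Omega \cap U =: \Sigma$ is a $C^{1,\alpha}$ embedded hypersurface in $U$. The goal of the main step is to show $\Sigma \subset \interior(N)$, i.e.\ $\Sigma \cap \partial N = \varnothing$; once this is established, the obstacle is irrelevant near every point of $\Sigma$, so $\Omega$ is an honest local minimizer of $\cA^h$ among finite perimeter sets in a full ambient neighborhood, and the classical interior regularity and stability theory for $\cA^h$-minimizers (as in the analysis underlying \cite{tamanini} together with the first and second variation formulas \eqref{eqn:second_variation}, or directly as in \cite{zhou-zhu_PMC}) yields that $\Sigma$ is smooth, embedded, $\cA^h$-stable, with $H_{\Sigma,\Omega} = h\mid_{\Sigma}$.

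The heart of the argument is ruling out boundary contact. Suppose $p \in \Sigma \cap \partial N$. Because $\Sigma$ is $C^{1,\alpha}$ at $p$ and lies on one side of the obstacle $\partial N$ locally (since $E_+ \subset \Omega$, $E_- \cap \Omega = \varnothing$, and $\Omega \triangle \Omega' \subset\subset U$ forces $\Omega$ to agree with the prescribed sets near $\partial N \setminus U$, while inside $U$ the competitor constraint keeps $\Omega$ between $E_+$ and $\tilde N \setminus E_-$), the tangent plane $T_p\Sigma$ must coincide with $T_p\partial N$. There are two cases according to which of $P_+$ or $P_-$ the point $p$ lies on. If $p \in P_+$: then $\Sigma$ touches $P_+$ from inside $E_+ \subset \Omega$, and the two hypersurfaces can be written as $C^{1,\alpha}$ graphs over $T_p\partial N$ with $\Sigma$ on the side of $E_+$. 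Now use that $\Sigma$ is a weak solution (in the viscosity/variational sense, which by $C^{1,\alpha}$ regularity and elliptic bootstrapping is in fact classical away from where it actually meets the obstacle, hence at least a distributional supersolution at $p$) of the prescribed mean curvature equation with $H_{\Sigma,\Omega} = h$ oriented out of $\Omega$, i.e.\ out of $E_+$, which is exactly the orientation $H_{P_+,E_+}$ refers to. The barrier hypothesis $H_{P_+,E_+} < h\mid_{P_+}$ at $p$ then gives a strict ordering of the mean curvatures of the two touching graphs at the contact point, with $\Sigma$ lying on the $E_+$ side; the Hopf boundary point lemma for the (uniformly elliptic, with $C^{0,\alpha}$ coefficients since the graphs are $C^{1,\alpha}$) mean curvature operator forces the graphs to separate, contradicting $p \in \Sigma \cap P_+$. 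The case $p \in P_-$ is symmetric: there $\Sigma$ bounds $\Omega$ from the complement of $E_-$, so the relevant outward orientation is the one defining $H_{P_-,E_-}$ but with the opposite sign of the normal, i.e.\ the comparison is between $H_{\Sigma}$ (out of $\Omega$) $= h$ and $-H_{P_-,E_-}$, and the hypothesis $H_{P_-,E_-} < -h\mid_{P_-}$, equivalently $-H_{P_-,E_-} > h\mid_{P_-}$, again gives the strict mean curvature ordering in the correct direction for Hopf to apply. In both cases we conclude $\Sigma \cap \partial N = \varnothing$.

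The step I expect to be the main obstacle is making the maximum principle argument rigorous at the level of regularity actually available: at a putative contact point $p$, $\Sigma$ is only known to be $C^{1,\alpha}$, and a priori it could touch $\partial N$ along a large set, so one cannot blithely write down two smooth graphs and quote the classical Hopf lemma. The correct route is the one indicated in the outline of the proof of Theorem \ref{thm:main_compact}: the $C^{1,\alpha}$ regularity from the obstacle problem is precisely what is needed to guarantee that $\Sigma$ has a well-defined tangent plane equal to $T_p\partial N$ and hence the \emph{correct orientation} at the boundary, and then the version of the Hopf lemma valid for $C^{1,\alpha}$ hypersurfaces satisfying the PMC equation weakly on the side away from the obstacle (combined with standard elliptic regularity, which promotes $\Sigma$ to be smooth and to satisfy the PMC equation classically at any point where it does \emph{not} meet the obstacle) rules out the touching. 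Once the weak formulation and the orientation are pinned down, the contradiction is the elementary observation highlighted in the introduction — in the $P_-$ case one genuinely uses that $-a > a$ fails unless $a$ has the right sign, which is encoded in the asymmetric barrier inequalities — and the remainder of the corollary (smoothness and $\cA^h$-stability in $U \cap \interior(N)$) is then immediate from the interior theory applied in the ambient manifold near $\Sigma$.
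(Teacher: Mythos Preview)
Your proposal follows the paper's approach and is essentially correct, but it blurs over a dichotomy that the paper makes explicit and that your ``Hopf lemma for $C^{1,\alpha}$ weak solutions'' does not actually cover. The paper splits the touching argument into two cases: (a) the component of $\partial\Omega\cap U$ that meets $\partial N$ is \emph{not} identically $\partial N\cap U$, so it contains interior points; then one picks a point $y$ on the edge of the contact set, writes $\Sigma$ in Fermi coordinates over $\partial N$ as the graph of a $C^{1,\alpha}$ function $u$ with $u>0$ on a ball $B$, $u(y)=0$, $\nabla u(y)=0$, and on $\{u>0\}$ interior regularity makes $u$ smooth and a classical PMC solution, so the \emph{classical} Hopf lemma applies directly against the barrier inequality; (b) the component coincides identically with a piece of $\partial N\cap U$, in which case $\Sigma$ sits entirely on the obstacle, satisfies no PMC equation whatsoever (not even weakly), and no Hopf-type argument is available---instead the paper tests the first variation of $\cA^h$ with an inward-pointing vector field supported near that piece, and the barrier inequality $H_{P_\pm,E_\pm}<\pm h$ makes the first variation strictly negative, contradicting minimality.

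Your write-up handles case (a) correctly in spirit (though the phrase ``$\Sigma$ touches $P_+$ from inside $E_+$'' is backwards: $\Sigma\subset N$, and what matters is that $\Omega$ lies on the $E_+$ side so the outward normal of $\Omega$ agrees with the outward normal of $E_+$). But your proposed fix for large contact sets---invoking a Hopf lemma ``on the side away from the obstacle''---presupposes there \emph{is} such a side, which fails precisely in case (b). Making the dichotomy explicit both closes this gap and lets you use only the classical Hopf lemma, with no need for any weak-solution version.
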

\begin{proof}
    By the standard interior regularity (see \cite[Theorem 2.2]{zhou-zhu_PMC}), $\partial \Omega \cap U \cap \mathrm{Int}(N)$ is a smooth embedded stable $h$-PMC hypersurface. Suppose for contradiction that a component $\Sigma$ of $\partial \Omega \cap U$ touches $\partial N \cap U$.
    
    We first claim that $\Sigma$ must be identically $\partial N \cap U$. Suppose otherwise, so $\Sigma$ contains a point in $\interior(N) \cap U$. By Theorem \ref{thm:obstacle}, we can find a ball $B \subset \partial N$ and a relatively open set $W \subset U$ so that (in normal Fermi coordinates over $\partial N$) $\Sigma \cap W$ is the graph of a $C^{1,\alpha}$ function $u$ on $\overline{B}$ satisfying $u\mid_{\interior(B)} > 0$, $u(y) = 0$ for some $y \in \partial B$, and $\nabla u(y) = 0$. Since $\Omega \in \cCEE(N)$, we have (depending on whether $\partial N \cap W \subset P_+$ or $\partial N \cap W \subset P_-$) that $\overline{\Omega} \cap W$ is the sublevel graph of $u$ (respectively the suplevel graph of $u$). We now arrive at a contradiction by the interior regularity, the Hopf lemma, and the assumption that $\partial N$ is a barrier. So we conclude that $\Sigma$ is identically $\partial N \cap U$, and the rest of $\partial \Omega \cap U$ lies in $U \cap \mathrm{Int}(N)$ by embeddedness.
    
    Since $\partial N$ is a barrier and $\Omega \in \cCEE(N)$ minimizes $\cA^h$ in $U$, the first variation of $\cA^h$ for any inward pointing vector field with small compact support in $U$ gives a contradiction, so we conclude that $\partial \Omega \cap U \subset \interior(N)$.
\end{proof}

\subsection{Unique continuation}\label{subsec:unique_continuation} Let $\tilde{\mathcal{S}} \subset C^{\infty}(\tilde{N})$ consist of smooth Morse functions $\tilde{h} \in C^{\infty}(\tilde{N})$ so that $\tilde{h}^{-1}(0)$ is a smooth closed hypersurface whose mean curvature vanishes to at most finite order. By \cite[Proposition 0.2]{zhou-zhu_PMC}, $\tilde{\mathcal{S}}$ is an open and dense subset of $C^{\infty}(\tilde{N})$.

Let $\mathcal{S} \subset C^{\infty}(N)$ consist of those $h \in C^{\infty}(N)$ so that there is some $\tilde{h} \in \tilde{\mathcal{S}}$ with $\tilde{h}\mid_N = h$.

\begin{proposition}
    $\mathcal{S}$ is a dense subset of $C^{\infty}(N)$.
\end{proposition}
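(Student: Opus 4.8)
The plan is to deduce density of $\mathcal{S}$ in $C^{\infty}(N)$ from the known density of $\tilde{\mathcal{S}}$ in $C^{\infty}(\tilde{N})$ together with an extension argument. The point is that $\mathcal{S}$ is defined as the set of restrictions to $N$ of elements of $\tilde{\mathcal{S}}$, so the only thing to check is that every $h \in C^{\infty}(N)$ can be approximated in the $C^{\infty}(N)$ topology by functions of the form $\tilde{h}\mid_N$ with $\tilde{h} \in \tilde{\mathcal{S}}$.

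First I would fix $h \in C^{\infty}(N)$ and a smooth extension $\hat{h} \in C^{\infty}(\tilde{N})$ (such an extension exists since $N$ is a compact smooth domain with smooth boundary in the closed manifold $\tilde{N}$, e.g.\ by the Seeley/Whitney extension theorem, or simply by using a collar of $\partial N$). Since $\tilde{\mathcal{S}}$ is dense in $C^{\infty}(\tilde{N})$ by \cite[Proposition 0.2]{zhou-zhu_PMC}, there is a sequence $\tilde{h}_k \in \tilde{\mathcal{S}}$ with $\tilde{h}_k \to \hat{h}$ in $C^{\infty}(\tilde{N})$. Then $\tilde{h}_k\mid_N \in \mathcal{S}$ by definition, and $\tilde{h}_k\mid_N \to \hat{h}\mid_N = h$ in $C^{\infty}(N)$, because $C^{\infty}$ convergence on $\tilde{N}$ restricts to $C^{\infty}$ convergence on the compact subset $N$ (each $C^k$ seminorm over $N$ is bounded by the corresponding one over $\tilde{N}$). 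This exhibits $h$ as a limit of elements of $\mathcal{S}$, proving density.

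I do not expect any serious obstacle here; the statement is essentially a formal consequence of the cited density result for $\tilde{\mathcal{S}}$ plus the existence of smooth extensions from a compact domain with smooth boundary to the ambient closed manifold. The only mild point worth stating carefully is the smooth extension step and the elementary fact that restriction $C^{\infty}(\tilde N)\to C^{\infty}(N)$ is continuous, so that approximation upstairs descends to approximation downstairs.
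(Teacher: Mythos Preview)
Your proposal is correct and follows exactly the same argument as the paper: extend $h$ smoothly to $\tilde{N}$, approximate the extension by elements of $\tilde{\mathcal{S}}$ using its density, and restrict back to $N$. The only difference is that you spell out the existence of the smooth extension and the continuity of restriction, which the paper leaves implicit.
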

\begin{proof}
    Take $h \in C^{\infty}(N)$. Let $\tilde{h}$ be any extension of $h$ to $\tilde{N}$. By the density of $\tilde{\mathcal{S}}$, there is a sequence $\{\tilde{h}_i\}_i \subset \tilde{\mathcal{S}}$ converging smoothly to $\tilde{h}$. By definition, $\{\tilde{h}_i\mid_N\} \subset \mathcal{S}$ converges smoothly to $\tilde{h}\mid_N = h$, as desired.
\end{proof}

By \cite[Theorem 3.11]{zhou-zhu_PMC}, $h$-PMC hypersurfaces for functions $h \in \mathcal{S}$ satisfy a unique continuation property required for the proof of regularity of almost minimizers in min-max.

\section{Min-max with barrier}

In this section, we prove the following theorem.

\begin{theorem}\label{thm:basic_min-max}
    Suppose $h \in \mathcal{S}$ and $\partial N$ is a barrier for $(h, P_+, P_-)$. Assume
    \[ \mathbf{L}^h(\Pi) > \sup_{z \in Z} \cA^h(\Phi_0(z)). \]
    Then there is $\Omega \in \cCEE(N)$ so that $\cA^h(\Omega) = \mathbf{L}^h(\Pi)$ and $\Sigma = \partial \Omega$ is a smooth, closed, almost embedded hypersurface contained in $\mathrm{Int}(N)$ satisfying $H_{\Sigma, \Omega} = h\mid_{\Sigma}$.
\end{theorem}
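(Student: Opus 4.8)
The plan is to run the Almgren--Pitts / Zhou--Zhu min-max machinery adapted to the constrained setting $\cCEE(N)$, treating $\partial N$ as an obstacle exactly as in \cite{zhihan}, and then upgrade the resulting stationary-type varifold to a smooth almost embedded $h$-PMC hypersurface in $\interior(N)$. First I would establish the basic structural facts: by the assumption $\mathbf{L}^h(\Pi) > \sup_{z\in Z}\cA^h(\Phi_0(z))$, the width is attained along a minimizing sequence $\{\Phi_i\}\in\Pi$ whose critical set $\mathbf{C}(\{\Phi_i\})$ is nonempty (this uses only continuity of $\cA^h$ in the relevant topologies and compactness of varifolds with bounded mass, the mass bound coming from the uniform bound on $\sup_x\cA^h(\Phi_i(x))$ together with $c = \sup|h| < \infty$). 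Next I would perform the \emph{pull-tight} step: deform a minimizing sequence so that every element $V$ of the critical set of the new sequence has $c$-bounded constrained first variation in $N$ (i.e. $\delta V(X) \geq -c\int|X|\,d\|V\|$ for all $X \in \mathcal{X}^+_{\mathrm{cpt}}(N)$) -- this is where the one-sidedness of the barrier is respected, since we only test against inward-pointing vector fields and against the functional $\cA^h$. By Lemma \ref{lem:constrained_variation}, such $V$ automatically has $C$-bounded (unconstrained) first variation in $N$, hence is a varifold with locally bounded first variation to which the monotonicity formula and tangent-cone theory apply.

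The heart of the argument is the \emph{almost minimizing} property and regularity. Following Pitts (and its PMC adaptation in \cite{zhou-zhu_PMC}), I would show that $\mathbf{C}(\{\Phi_i\})$ contains a varifold $V = |\partial^*\Omega|$ for some $\Omega\in\cCEE(N)$ that is almost minimizing in small annuli (in the $\cCEE(N)$-constrained sense, which is the obstacle-problem sense). The combinatorial arguments producing an almost minimizing varifold carry over verbatim, since the class $\cCEE(N)$ is closed under the cut-and-paste and interpolation operations used there, and $\cA^h$ satisfies the same continuity and compactness estimates as in the compact closed case (the constraint $E_+\subset\Omega$, $E_-\cap\Omega=\varnothing$ is preserved under the deformations). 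For regularity: in any relatively open $U\subset N$ where $V$ is almost minimizing, $V$ is realized as an $L^1$-limit of $\cA^h$-minimizers in $\cCEE(N)$ restricted to competitors supported in $U$; Corollary \ref{cor:local_reg} (which combines Theorem \ref{thm:obstacle}, interior PMC regularity from \cite{zhou-zhu_PMC}, the $C^{1,\alpha}$ boundary regularity, the Hopf lemma, and the barrier hypothesis) shows each such minimizer is a smooth embedded $\cA^h$-stable $h$-PMC hypersurface lying in $U\cap\interior(N)$. The replacement/combination arguments of \cite{pitts, zhou-zhu_PMC}, using the unique continuation property for $h\in\mathcal{S}$ (established in \S\ref{subsec:unique_continuation} via \cite[Theorem 3.11]{zhou-zhu_PMC}), then promote $V$ to a smooth almost embedded $h$-PMC hypersurface away from a small singular set, which is empty in dimensions $3\leq n+1\leq 7$ by the standard dimension bound; and since every local sheet lies in $\interior(N)$, the whole hypersurface $\Sigma = \mathrm{supp}\,V$ is a closed almost embedded $h$-PMC hypersurface in $\interior(N)$. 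Finally, $V = |\partial^*\Omega|$ with $\Sigma = \partial\Omega$ and $H_{\Sigma,\Omega} = h\mid_\Sigma$, and the energy identity $\cA^h(\Omega) = \mathbf{L}^h(\Pi)$ follows from the construction of $V$ as a limit realizing the width.

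\textbf{Main obstacle.} I expect the principal difficulty to be the regularity step at the boundary, specifically verifying that an almost minimizing varifold for the \emph{constrained} problem cannot touch $\partial N$ given only the one-sided barrier inequality. The subtlety, flagged in the paper's introduction, is that a varifold with bounded first variation carries no orientation data, so the naive maximum principle of \cite{white_maxprinciple} is unavailable when $|h| > H_{\partial N}$; the resolution is to never work with the bare varifold near $\partial N$ but always with the $C^{1,\alpha}$-regular $\cA^h$-minimizers furnished by Theorem \ref{thm:obstacle}, whose orientation relative to $\partial N$ is determined by the constraint $\Omega\in\cCEE(N)$, and only then apply the classical Hopf lemma as in Corollary \ref{cor:local_reg}. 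Threading this through every instance where the min-max machinery invokes a local minimizer -- in particular ensuring the almost minimizing property is genuinely the obstacle-problem version and that replacements stay in $\cCEE(N)$ -- is the technically delicate part; the remaining steps are adaptations of \cite{zhou-zhu_PMC} and \cite{zhihan} that go through with only notational changes.
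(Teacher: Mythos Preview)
Your overall strategy matches the paper's: pull-tight with inward-pointing vector fields, produce an almost minimizing varifold in the constrained (obstacle) sense, build replacements via constrained minimization, invoke Corollary~\ref{cor:local_reg} so that each replacement is smooth and lies in $\interior(N)$, then glue using unique continuation for $h\in\mathcal{S}$. You also correctly identify the central difficulty.

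There is, however, a real gap in your boundary exclusion step. You write that ``since every local sheet lies in $\interior(N)$, the whole hypersurface $\Sigma=\mathrm{supp}\,V$ is a closed almost embedded $h$-PMC hypersurface in $\interior(N)$.'' This presumes $V$ has already been identified with its replacements near boundary points, which is precisely what remains to be shown: a replacement agrees with $V$ only \emph{outside} the replacement region, so the fact that a single replacement avoids $\partial N$ in its interior does not by itself force $\mathrm{spt}\|V\|\cap\partial N=\varnothing$. The paper supplies a specific two-step mechanism you do not mention. First (Proposition~\ref{prop:tangent_plane}), any tangent cone to $V$ at a boundary point $y$ must be an integer multiple of $T_y\partial N$: the interior argument gives a plane, and the constraint $\mathrm{spt}\|V\|\subset N$ forces that plane to be the boundary tangent. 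Second (Proposition~\ref{prop:int}), a \emph{double replacement} contradiction: replace $V$ in a half-ball $\overline{B_d^N(x)}$ to get $V'$, then replace $V'$ in a half-annulus $\overline{A^N_{d/2,3d/2}(x)}$ to get $V''$; by Proposition~\ref{prop:reg_replace}, $V''$ avoids $\partial N$ in the annulus so $y\notin\mathrm{spt}\|V''\|$, but the unique continuation/gluing argument (applied in the interior overlap region $A^N_{d/2,d}(x)$) shows $V''=V'$ there, and the tangent cone classification then forces $y\in\mathrm{spt}\|V''\|$, a contradiction. Only after this do removable singularities and the classical maximum principle (for a smooth $h$-PMC hypersurface touching the barrier tangentially at isolated points) finish the job. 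Your ``Main obstacle'' paragraph has the right philosophy---work with oriented minimizers, not bare varifolds---but the actual transfer from ``replacements avoid $\partial N$'' to ``$V$ avoids $\partial N$'' requires this tangent-cone plus double-replacement argument, which is the genuinely new content beyond \cite{zhou-zhu_PMC}.
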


\subsection{Nontriviality}
Here we show that $\partial^* \Omega \neq \varnothing$. If $\partial^* \Omega = \varnothing$, then $\Omega \in \{\varnothing, \tilde{N}\}$\footnote{This scenario is only possible if at least one of $E_-$ or $E_+$ is empty.}. By the isoperimetric inequality in $\tilde{N}$ (see \cite[Theorem 2.3]{zhou-zhu_PMC}), $\varnothing$ and $\tilde{N}$ are strict minimizers for $\cA^h$ in a small flat neighborhood, so any nontrivial min-max $h$-width cannot equal $\cA^h(\varnothing)$ nor $\cA^h(\tilde{N})$ (as $X$ is connected and $Z$ is nonempty).

\subsection{Pull-tight}

Let $c = \sup_N |h|$ and $L = 2(\mathbf{L}^h + c\mathrm{Vol}(\tilde{N}))$. Let
\[ A^L = \{V \in \mathcal{V}_n(N) \mid \|V\|(N) \leq L\} \]
and
\begin{align*}
    A_{\infty}
    & = \{V \in A^L \mid V\ \text{has}\ c\text{-bounded constrained first variation in}\ N\} \cup \{|\partial^* \Phi_0(z)| \mid z \in Z\}.
\end{align*}

We follow the construction of \cite[\S1.2]{zhou_mult}, \cite[\S5]{zhou-zhu_PMC}, and \cite[\S4]{zhou-zhu_CMC} for a map to the space of vector fields $\mathcal{D} : A^L \to \mathcal{X}_{\text{cpt}}(N)$. By the definition of $A_{\infty}$, if $V \in A^L$ satisfies $2^{-j} \leq \mathbf{F}(V, A_{\infty}) \leq 2^{-j+1}$, then we can find $X_V \in \mathcal{X}_{\text{cpt}}^+(N)$ so that
\[ \|X_V\|_{C^1} \leq 1,\ \ \delta V(X_V) + c \int_N |X_V|\ d\|V\| \leq -c_j < 0, \]
where $c_j > 0$ does not depend on $V$. Since the map to vector fields in \cite[\S4.2]{zhou-zhu_CMC} is constructed using linear combinations of the vector fields $X_V$ for some $V$, the identical construction actually gives $\mathcal{D} : A^L \to \mathcal{X}_{\text{cpt}}^+(N)$. Hence, we can use $\mathcal{D}$ to construct the desired tightening map using flows as in \cite[\S4.3, \S4.4]{zhou-zhu_CMC} (the flows stay in $N$ by construction).

To summarize, we obtain an $\mathbf{F}$-continuous map
\[ \mathcal{PT}: [0, 1] \times \{\Omega \in \cCEE(N)\mid |\partial^* \Omega| \in A^L\} \to \cCEE(N) \]
satisfying
\begin{itemize}
    \item $\mathcal{PT}(0, \Omega) = \Omega$ for all $\Omega$,
    \item $\mathcal{PT}(t, \Omega) = \Omega$ for all $t \in [0, 1]$ if $|\partial^* \Omega| \in A_{\infty}$,
    \item there is a continuous function $L : \R_{\geq 0} \to \R_{\geq0}$ with $L(0) = 0$ and $L(t) > 0$ for $t > 0$ so that
    \[ \cA^h(\mathcal{PT}(1, \Omega)) - \cA^h(\Omega) \leq -L(\mathbf{F}(|\partial^* \Omega|, A_{\infty})). \]
\end{itemize}

\begin{lemma}[Pull-tight]
    Let $\{\Phi^*_i\}_i \subset \Pi$ be a minimizing sequence. Then there is another minimizing sequence $\{\Phi_i\}_i \subset \Pi$ so that $\mathbf{C}(\{\Phi_i\}) \subset \mathbf{C}(\{\Phi_i^*\}) \cap A_{\infty}$.
\end{lemma}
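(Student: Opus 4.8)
The plan is to apply the tightening map $\mathcal{PT}$ described immediately above term-by-term along a given minimizing sequence, taking care to stay within the homotopy class $\Pi$ and to preserve the minimizing property. First I would fix a minimizing sequence $\{\Phi_i^*\}_i \subset \Pi$ and note that, since $\limsup_i \sup_x \cA^h(\Phi_i^*(x)) = \mathbf{L}^h(\Pi)$, after discarding finitely many terms we may assume $\sup_x \cA^h(\Phi_i^*(x)) \leq \mathbf{L}^h(\Pi) + 1 < \tfrac{1}{2}L$, so that $|\partial^*\Phi_i^*(x)| \in A^L$ for all $x \in X$ and all $i$ (using $\cH^n(\partial^*\Phi_i^*(x)) = \cA^h(\Phi_i^*(x)) + \int_{\Phi_i^*(x)} h \leq \cA^h(\Phi_i^*(x)) + c\,\mathrm{Vol}(\tilde N)$). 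This is exactly the domain on which $\mathcal{PT}$ is defined. I then set $\Phi_i := \mathcal{PT}(1, \Phi_i^*(\cdot))$, which is $\mathbf{F}$-continuous as a composition of $\mathbf{F}$-continuous maps.

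Next I would check $\{\Phi_i\}_i \in \Pi$. The homotopy $(t, x) \mapsto \mathcal{PT}(t, \Phi_i^*(x))$ is an $\mathcal{F}$-continuous (indeed $\mathbf{F}$-continuous) homotopy from $\Phi_i$ back to $\Phi_i^*$; concatenating with the homotopy $\Psi_i^*$ witnessing $\{\Phi_i^*\} \in \Pi$ gives a homotopy $\Psi_i$ from $\Phi_i$ to $\Phi_0$. To control the behavior on $Z$: for $z \in Z$, we have $|\partial^*\Phi_0(z)| \in A_\infty$ by definition of $A_\infty$, and since $\mathbf{F}(\Psi_i^*(t,z), \Phi_0(z)) \to 0$ uniformly in $t, z$, the second bullet point of $\mathcal{PT}$ together with the continuity modulus of $\mathcal{PT}$ in its second argument forces $\mathcal{PT}(s, \Phi_i^*(z))$ to remain $\mathbf{F}$-close to $\Phi_0(z)$ uniformly; hence $\limsup_i \sup\{\mathbf{F}(\Psi_i(t,z), \Phi_0(z)) \mid t \in [0,1],\ z \in Z\} = 0$, so $\{\Phi_i\} \in \Pi$. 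Moreover, because $\mathcal{PT}$ does not increase $\cA^h$ (third bullet point), $\sup_x \cA^h(\Phi_i(x)) \leq \sup_x \cA^h(\Phi_i^*(x))$, so $\limsup_i \sup_x \cA^h(\Phi_i(x)) \leq \mathbf{L}^h(\Pi)$; combined with the definition of $\mathbf{L}^h$ this is an equality, so $\{\Phi_i\}$ is minimizing.

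Finally I would verify $\mathbf{C}(\{\Phi_i\}) \subset \mathbf{C}(\{\Phi_i^*\}) \cap A_\infty$. Take $V = \lim_j |\partial^*\Phi_{i_j}(x_j)|$ with $\lim_j \cA^h(\Phi_{i_j}(x_j)) = \mathbf{L}^h(\Pi)$. Writing $\Omega_j^* = \Phi_{i_j}^*(x_j)$ and $\Omega_j = \mathcal{PT}(1, \Omega_j^*)$, the inequality $\cA^h(\Omega_j) - \cA^h(\Omega_j^*) \leq -L(\mathbf{F}(|\partial^*\Omega_j^*|, A_\infty))$ together with $\cA^h(\Omega_j) \to \mathbf{L}^h(\Pi)$ and $\cA^h(\Omega_j^*) \leq \mathbf{L}^h(\Pi) + o(1)$ forces $L(\mathbf{F}(|\partial^*\Omega_j^*|, A_\infty)) \to 0$, hence $\mathbf{F}(|\partial^*\Omega_j^*|, A_\infty) \to 0$ by the properties of $L$. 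Consequently $\mathbf{F}(|\partial^*\Omega_j|, |\partial^*\Omega_j^*|) \to 0$ (the tightening displacement is controlled by $\mathbf{F}(\cdot, A_\infty)$ via the modulus of continuity of $\mathcal{PT}$), so $|\partial^*\Omega_j^*| \to V$ as well; since $A_\infty$ is $\mathbf{F}$-closed in $A^L$ (it is the union of a set cut out by closed conditions on first variation with a finite set), $V \in A_\infty$. The same computation shows $\cA^h(\Omega_j^*) \to \mathbf{L}^h(\Pi)$, so $V \in \mathbf{C}(\{\Phi_i^*\})$. The main obstacle is the bookkeeping around the homotopy on $Z$ — ensuring that applying $\mathcal{PT}$ does not destroy the defining uniform-smallness condition of $\Pi$ near $Z$ — but this is handled by the second bullet point of $\mathcal{PT}$ (it fixes elements of $A_\infty$, and $\Phi_0(z) \in A_\infty$ for $z \in Z$) together with uniform continuity of $\mathcal{PT}$; the rest is routine and essentially identical to the closed-manifold arguments of \cite{zhou_mult, zhou-zhu_CMC, zhou-zhu_PMC}.
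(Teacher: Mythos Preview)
Your proposal is correct and is precisely the standard pull-tight argument to which the paper defers via its one-line citation of \cite[Lemma 1.8]{zhou_mult}. One minor imprecision: the set $\{|\partial^*\Phi_0(z)| : z \in Z\}$ is compact (as the $\mathbf{F}$-continuous image of the compact subcomplex $Z$) rather than finite, but this still gives closedness of $A_\infty$ and does not affect the argument.
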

\begin{proof}
    The proof of \cite[Lemma 1.8]{zhou_mult} applies.
\end{proof}

\subsection{Existence of almost minimizers}

We make the same definition of $h$-almost minimizing as in \cite[Definition 6.1]{zhou-zhu_PMC}.

\begin{definition}
    For $\eps, \delta > 0$ and $U \subset N$ relatively open, we define $\mathscr{A}^h(U; \eps, \delta)$ to be the set of $\Omega \in \cCEE(N)$ such that if $\Omega = \Omega_0,\ \Omega_1, \hdots, \Omega_m \in \cCEE(N)$ satisfies
    \begin{itemize}
        \item $\Omega_i \triangle \Omega \subset U$,
        \item $\mathcal{F}([\partial^*\Omega_i] - [\partial^*\Omega_{i+1}]) \leq \delta$,
        \item $\cA^h(\Omega_i) \leq \cA^h(\Omega) + \delta$,
    \end{itemize}
    then $\cA^h(\Omega_m) \geq \cA^h(\Omega) - \eps$.

    We say that $V \in \mathcal{V}_n(N)$ is \emph{$h$-almost minimizing in $U$} if there are $\eps_i, \delta_i \to 0$ and $\Omega_i \in \mathscr{A}^h(U; \eps_i, \delta_i)$ such that $|\partial^*\Omega_i| \rightharpoonup V$.
\end{definition}

\begin{proposition}
    If $V \in \mathcal{V}_n(N)$ is $h$-almost minimizing in $U$, then $V$ has $c$-bounded constrained first variation in $U$ for $c = \sup|h|$.
\end{proposition}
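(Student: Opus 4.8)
The plan is to show that if $V \in \mathcal{V}_n(N)$ is $h$-almost minimizing in $U$, then it picks up the constrained first variation bound in the limit from the almost-minimizing property of the approximating sets $\Omega_i$. The key point is that $h$-almost minimizing means we cannot decrease $\cA^h$ much by a sequence of small deformations supported in $U$; in particular, for a single small deformation in the direction of an admissible vector field $X \in \mathcal{X}_{\text{cpt}}^+(U)$, the value $\cA^h$ cannot drop below $\cA^h(\Omega_i) - \eps_i$. Since a single push by the flow of $X$ automatically satisfies the intermediate-step hypotheses ($\mathcal{F}$-closeness and $\cA^h$-closeness) once $\eps_i, \delta_i$ are small and the flow time is small, we get a first-variation inequality for each $\Omega_i$ with an error $o(1)$, and then pass to the limit.

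First I would fix $X \in \mathcal{X}_{\text{cpt}}^+(U)$ with $\|X\|_{C^1} \leq 1$ and let $\psi_t$ denote its flow, which for $|t|$ small maps $\cCEE(N)$ to itself since $X$ is inward-pointing along $\partial N$ and compactly supported in $U$. Set $\Omega_i^t = \psi_t(\Omega_i)$. The first variation formula gives
\[
    \cA^h(\Omega_i^t) = \cA^h(\Omega_i) + t\left(\delta|\partial^*\Omega_i|(X) - \int_{\Omega_i} \mathrm{div}(hX)\right) + o(t)
\]
uniformly in $i$ (the error is controlled by $\|X\|_{C^2}$ and the uniform mass bound $\|V\|(N) \leq L$, using that $|\partial^*\Omega_i|$ has mass bounded independent of $i$). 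Rewriting $\int_{\Omega_i}\mathrm{div}(hX) = \int_{\partial^*\Omega_i} h\langle X, \nu_{\Omega_i}\rangle$ and bounding $|h| \leq c$, we obtain
\[
    \cA^h(\Omega_i^t) \leq \cA^h(\Omega_i) + t\,\delta|\partial^*\Omega_i|(X) + tc\int_{\partial^*\Omega_i}|X| + o(t).
\]
Next I would choose $t = t_i \to 0$ slowly enough that the chain $\Omega_i = \Omega_i^0, \Omega_i^{t_i}$ satisfies $\mathcal{F}([\partial^*\Omega_i] - [\partial^*\Omega_i^{t_i}]) \leq \delta_i$ and $\cA^h(\Omega_i^{t_i}) \leq \cA^h(\Omega_i) + \delta_i$ (both are automatic for $t_i$ small relative to $\delta_i$ by continuity of the flow), so that the definition of $\mathscr{A}^h(U;\eps_i,\delta_i)$ forces $\cA^h(\Omega_i^{t_i}) \geq \cA^h(\Omega_i) - \eps_i$. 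Combining with the expansion above and dividing by $t_i > 0$ yields
\[
    \delta|\partial^*\Omega_i|(X) + c\int_{\partial^*\Omega_i}|X| \geq -\frac{\eps_i}{t_i} + o(1).
\]
Here I need $\eps_i/t_i \to 0$, which is arranged by first shrinking $t_i$ relative to both $\delta_i$ \emph{and} $\eps_i$ — e.g.\ $t_i = \min(\delta_i, \sqrt{\eps_i})$ works after checking the flow estimates. Then letting $i \to \infty$ and using $|\partial^*\Omega_i| \rightharpoonup V$ (so $\delta|\partial^*\Omega_i|(X) \to \delta V(X)$ by smoothness of $X$, and $\int |X|\,d\|\partial^*\Omega_i\| \to \int|X|\,d\|V\|$ by weak convergence against the continuous function $|X|$, which has compact support in $U$) gives $\delta V(X) \geq -c\int_U |X|\,d\|V\|$ for all such $X$, and then for all $X \in \mathcal{X}_{\text{cpt}}^+(U)$ by scaling.

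The main obstacle I anticipate is the bookkeeping in the simultaneous choice of $t_i$: it must be small enough to make the single-push deformation an admissible competitor chain (controlling the flat distance and the $\cA^h$ increase by $\delta_i$), yet large enough relative to $\eps_i$ that the error $\eps_i/t_i$ vanishes, and this needs the $o(t)$ in the first-variation expansion to be genuinely uniform in $i$ — which it is, thanks to the uniform mass bound, but this should be stated carefully. A second minor point is verifying that $\psi_{t}(\Omega_i) \in \cCEE(N)$, i.e.\ that the flow preserves the inclusion/exclusion of $E_\pm$; this follows because $X$ is compactly supported in the relatively open set $U \subset N$, hence vanishes near $\partial N$ outside a compact subset, so the flow fixes a neighborhood of $P_\pm$. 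With these two points handled, the argument is the standard one (cf.\ \cite[Lemma 6.2 or its analogue]{zhou-zhu_PMC}) and I would cite that proof for the routine estimates.
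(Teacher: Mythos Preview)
Your overall strategy is the standard one, and indeed the paper simply cites \cite[Lemma 5.2]{zhou-zhu_CMC} for this proof. However, your single-step implementation has a real gap in the choice of $t_i$. You use the chain $\Omega_i = \Omega_i^0,\ \Omega_i^{t_i}$ of length $m=1$ and try to pick $t_i$ so that simultaneously (a) the flat distance and $\cA^h$-increase are at most $\delta_i$, and (b) $\eps_i/t_i \to 0$. Since both the flat distance and the change in $\cA^h$ along the flow are $O(t_i)$ uniformly in $i$, condition (a) forces $t_i \lesssim \delta_i$; then (b) requires $\eps_i/\delta_i \to 0$. The definition of $h$-almost minimizing gives you no control on this ratio --- in Pitts-type constructions $\eps_i$ is typically \emph{much larger} than $\delta_i$ --- so your proposed choice $t_i = \min(\delta_i,\sqrt{\eps_i})$ fails whenever $\delta_i \leq \sqrt{\eps_i}$, since then $t_i = \delta_i$ and $\eps_i/t_i = \eps_i/\delta_i$ need not vanish.

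The fix is precisely the feature of the definition you did not exploit: the chain may have arbitrary finite length $m$. Argue by contradiction: if $\delta V(X) + c\int_U |X|\,d\|V\| < -2\gamma$ for some $\gamma > 0$, then for all large $i$ and all $t$ in a \emph{fixed} interval $[0,t_0]$ (using your uniform first-variation expansion, now applied along the flow) one has $\frac{d}{dt}\cA^h(\Omega_i^t) < -\gamma$. Take the $m$-step chain $\Omega_i^{jt_0/m}$, $j=0,\ldots,m$, with $m=m(i)$ large enough that each step has flat distance $\leq \delta_i$; the $\cA^h$-increase hypothesis is automatic since $\cA^h$ is strictly decreasing along the chain. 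Then $\cA^h(\Omega_i^{t_0}) \leq \cA^h(\Omega_i) - \gamma t_0 < \cA^h(\Omega_i) - \eps_i$ for $i$ large, contradicting $\Omega_i \in \mathscr{A}^h(U;\eps_i,\delta_i)$. With this modification your argument is complete and matches the cited proof.
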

\begin{proof}
    The same proof as \cite[Lemma 5.2]{zhou-zhu_CMC} applies.
\end{proof}

\begin{definition}
    We say $V \in \mathcal{V}_n(N)$ is \emph{$h$-almost minimizing in small annuli} if for any $p \in N$, there exists $r_{\text{am}}(p) > 0$ such that $V$ is $h$-almost minimizing in $A_{s,r}^N(p)$ for all $0 < s < r \leq r_{\text{am}}(p)$.
\end{definition}

\begin{theorem}
    For every pulled-tight minimizing sequence $\{\Phi_i\}$ in $\Pi$, there is some $V \in \mathbf{C}(\{\Phi_i\})$ that has $c$-bounded constrained first variation in $N$ and is $h$-almost minimizing in small annuli.
\end{theorem}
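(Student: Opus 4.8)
The plan is to follow the standard Almgren--Pitts / Marques--Neves combinatorial argument for producing an almost minimizing varifold, adapting it to the constrained setting (obstacle $\partial N$) exactly as in \cite{zhou-zhu_PMC}. The key point is that all the pieces needed for that argument have already been set up in the excerpt: the pull-tight procedure produces a minimizing sequence whose critical set lies in $A_\infty$, hence every $V \in \mathbf{C}(\{\Phi_i\})$ automatically has $c$-bounded constrained first variation in $N$; and by Lemma \ref{lem:constrained_variation} this upgrades to $C$-bounded first variation, which gives the needed compactness of varifold tangent cones. So the only real content is the almost minimizing property in small annuli.

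First I would argue by contradiction: suppose no $V \in \mathbf{C}(\{\Phi_i\})$ is $h$-almost minimizing in small annuli. Then for each such $V$ there is a point $p(V)$ at which $V$ fails to be $h$-almost minimizing in annuli of every small scale. Following \cite[\S6]{zhou-zhu_PMC} (which in turn follows Pitts and Marques--Neves), one fixes a suitable collection of concentric annuli around each such point, with ratios chosen so that annuli at comparable centers but far-apart scales are disjoint, and then runs the deformation argument: because $V$ is not almost minimizing in these annuli, one can deform the sweepout elements near $x_j$ to strictly decrease $\cA^h$ by a definite amount on each annulus, while controlling the interaction between the deformations on disjoint annuli and the $\mathcal{F}$-continuity/homotopy constraints. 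A Lebesgue-number / discretization-interpolation argument (converting the continuous sweepout into a discrete one and back, as in \cite{zhou-zhu_PMC}) then shows these local deformations can be assembled into a competitor sweepout in $\Pi$ whose width is strictly below $\mathbf{L}^h(\Pi)$, contradicting the definition of the width. The crucial compatibility with our setting is that all deformations take place inside relatively open sets $U \subset N$ and respect the constraint $\Omega \in \cCEE(N)$; since the obstacle $\partial N$ only ever \emph{restricts} the competitors, the deformation still decreases $\cA^h$ and stays admissible, so the combinatorial argument goes through verbatim.

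The step I expect to be the main obstacle — or at least the one requiring the most care — is verifying that the Pitts-type deformation argument is genuinely unaffected by the obstacle constraint. In the obstacle setting one must check that when a sweepout element $\Phi_i(x)$ touches $\partial N$, the local deformations used to decrease $\cA^h$ on an annulus $A^N_{s,r}(p)$ can still be performed inside $\cCEE(\tilde N)$; this is precisely why the almost minimizing competitors in the definition of $\mathscr{A}^h(U;\eps,\delta)$ were required to lie in $\cCEE(N)$ and to differ from $\Omega$ only inside $U \subset N$. Granting that the constrained competitor class is closed under the operations used (cutting and pasting inside $U$, which it is, since $E_\pm$ are fixed components of the complement of $N$), the argument reduces to the one in \cite[\S6]{zhou-zhu_PMC}. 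I would therefore structure the proof as: (1) recall that pull-tight gives $\mathbf{C}(\{\Phi_i\}) \subset A_\infty$, so every element has $c$-bounded constrained first variation in $N$; (2) state that the almost minimizing property follows by the combinatorial argument of \cite[\S6]{zhou-zhu_PMC}, pointing out that all deformations respect the constraint $\cCEE(N)$ because they are supported in relatively open subsets of $N$; and (3) conclude. This keeps the proof short by citing the existing machinery while flagging the one place — admissibility of the deforming competitors — where the barrier hypothesis and the choice of $\cCEE(N)$ do real work.
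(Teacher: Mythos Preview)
Your approach is essentially the same as the paper's: the paper simply cites the Pitts combinatorial construction \cite[4.9, 4.10]{pitts} together with the discretization and interpolation machinery of \cite[\S1.3]{zhou_mult} (rather than \cite[\S6]{zhou-zhu_PMC}, though these are equivalent here), and your emphasis that the deformations stay inside $\cCEE(N)$ because they are supported in relatively open subsets of $N$ is exactly the right compatibility check.

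There is one small gap in your step (1). You write that pull-tight gives $\mathbf{C}(\{\Phi_i\}) \subset A_\infty$, ``so every element has $c$-bounded constrained first variation in $N$.'' But $A_\infty$ is a union of \emph{two} sets: varifolds with $c$-bounded constrained first variation, \emph{and} the varifolds $|\partial^*\Phi_0(z)|$ for $z \in Z$. Membership in $A_\infty$ alone does not give the conclusion. The paper handles this by noting that the $c$-bounded constrained first variation of the specific almost-minimizing $V$ ``follows from the same argument as the final paragraph of the proof of \cite[Theorem 1.7]{zhou_mult}'': roughly, the nontriviality assumption $\mathbf{L}^h(\Pi) > \sup_{z\in Z}\cA^h(\Phi_0(z))$ forces the parameters $x_j$ realizing $V$ to stay uniformly away from $Z$, so $V$ cannot lie in the second part of $A_\infty$. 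You should add this one-line observation to close the argument.
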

\begin{proof}
    The proof follows from the combinatorial construction of \cite[4.9, 4.10]{pitts} (see \cite[Theorem 1.16]{zhou_mult}) and the discretization and interpolation constructions of \cite[\S1.3]{zhou_mult}. That $V$ has $c$-bounded constrained first variation in $N$ follows from the same argument as the final paragraph of the proof of \cite[Theorem 1.7]{zhou_mult}.
\end{proof}

\subsection{Regularity of almost minimizers}

The interior regularity theory of \cite[Theorem 7.1]{zhou-zhu_PMC} is local, and therefore applies to $\mathrm{Int}(N)$. Hence, we only need to deal with the boundary. Throughout this subsection, we exploit the fact that $c$-bounded contrained first variation in $N$ implies $C$-bounded first variation in $\tilde{N}$ for some $C(N, c)$ (by Lemma \ref{lem:constrained_variation}).

We first recall the construction of replacements for $h$-almost minimizing varifolds.

\begin{lemma}[Constrained minimization]\label{lem:constrained_min}
    Let $\eps, \delta > 0$ and $U \subset N$ relatively open. Take $\Omega \in \mathscr{A}^h(U;\eps, \delta)$. Fix a relatively compact subset $K \subset U$. Let $\mathcal{C}_{\Omega}$ be the set of $\Lambda \in \cCEE(N)$ such that there is a sequence $\Omega = \Omega_0,\ \Omega_1, \hdots, \Omega_m=\Lambda$ in $\cCEE(N)$ satisfying
    \begin{itemize}
        \item $\Omega_i \triangle \Omega \subset K$,
        \item $\mathcal{F}([\partial^* \Omega_i] - [\partial^* \Omega_{i+1}]) \leq \delta$,
        \item $\cA^h(\Omega_i)\leq \cA^h(\Omega) + \delta$.
    \end{itemize}
    Then there exists $\Omega^*\in \cCEE(N)$ such that
    \begin{enumerate}
        \item $\Omega^*\in \mathcal{C}_{\Omega}$ and
        \[ \cA^h(\Omega^*) = \inf\{\cA^h(\Lambda) \mid \Lambda \in \mathcal{C}_{\Omega}\}, \]
        \item $\Omega^*$ is locally minimizing for $\cA^h$ in $\intrel(K)$,
        \item $\Omega^*\in \mathscr{A}^h(U; \eps, \delta)$.
    \end{enumerate}
\end{lemma}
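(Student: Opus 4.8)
The plan is to produce $\Omega^*$ by a direct method applied to the functional $\cA^h$ restricted to the class $\mathcal{C}_\Omega$, and then to verify the three assertions in turn. First I would check that $\mathcal{C}_\Omega$ is nonempty (it contains $\Omega$ itself) and that $\cA^h$ is bounded below on it: indeed every $\Lambda \in \mathcal{C}_\Omega$ satisfies $\cA^h(\Lambda) \leq \cA^h(\Omega) + \delta$ by the third defining property of the chain, and $\cA^h(\Lambda) = \cH^n(\partial^*\Lambda) - \int_\Lambda h \geq -c\,\mathrm{Vol}(\tilde N)$, so in particular the perimeters $\cH^n(\partial^*\Omega_i)$ along any admissible chain are uniformly bounded (by $\cA^h(\Omega) + \delta + c\,\mathrm{Vol}(\tilde N)$). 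Take a minimizing sequence $\{\Lambda_j\} \subset \mathcal{C}_\Omega$ for $\cA^h$. By the uniform perimeter bound and compactness for finite perimeter sets in $\tilde N$ (BV compactness), after passing to a subsequence $\mathbf{1}_{\Lambda_j} \to \mathbf{1}_{\Omega^*}$ in $L^1(\tilde N)$ for some $\Omega^* \in \mathcal{C}(\tilde N)$; since $E_+ \subset \Lambda_j$ and $E_- \cap \Lambda_j = \varnothing$ for all $j$, the same holds for $\Omega^*$, i.e.\ $\Omega^* \in \cCEE(N)$. Lower semicontinuity of perimeter under $L^1$ convergence and continuity of $\Omega \mapsto \int_\Omega h$ give $\cA^h(\Omega^*) \leq \liminf_j \cA^h(\Lambda_j) = \inf_{\mathcal{C}_\Omega}\cA^h$.

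The main obstacle is (1): showing $\Omega^* \in \mathcal{C}_\Omega$ — that the $L^1$ limit can actually be reached along an \emph{admissible finite chain} starting from $\Omega$. This is where I would follow the standard argument (as in \cite[\S6]{zhou-zhu_PMC}, \cite[\S5]{zhou-zhu_CMC}, going back to \cite{pitts}): because $\mathcal{C}(\tilde N)$ with the $\mathbf{M}$- and $\mathcal{F}$-metrics is, after restriction to sets whose symmetric difference with $\Omega$ lies in $K$ and whose $\cA^h$-value is at most $\cA^h(\Omega)+\delta$, well approximated by finite $\mathcal{F}$-fine chains, one can splice the approximating chain for $\Lambda_j$ (with $j$ large, so that $\mathcal F([\partial^*\Lambda_j]-[\partial^*\Omega^*])$ and $|\cA^h(\Lambda_j)-\cA^h(\Omega^*)|$ are small) to a short $\mathcal{F}$-continuous path from $\Lambda_j$ to $\Omega^*$ that stays inside $K$, has $\mathcal F$-steps $\leq \delta$, and does not raise $\cA^h$ above $\cA^h(\Omega)+\delta$; here one uses that $\Lambda_j$ is already $\cA^h$-close to the infimum so the whole path can be taken $\cA^h$-small. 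The one subtlety is keeping all intermediate sets in $\cCEE(N)$, which is automatic since the interpolating sets differ from $\Lambda_j$ only inside $K \subset U \subset N$, hence agree with $\Lambda_j$ (and so with $\Omega^*$) on $E_\pm$.

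Given (1), assertion (2) is a one-line consequence of minimality: if $\Lambda \in \cCEE(N)$ satisfies $\Lambda \triangle \Omega^* \subset\subset \intrel(K)$, then appending $\Lambda$ to an admissible chain realizing $\Omega^* \in \mathcal{C}_\Omega$ is still admissible — the new step has $\Lambda \triangle \Omega \subset K$, small $\mathcal{F}$-distance (shrink the competitor toward $\Omega^*$ through a finite $\mathcal F$-fine chain if needed, staying below $\cA^h(\Omega^*)+\delta \leq \cA^h(\Omega)+\delta$), and $\cA^h(\Lambda) \leq \cA^h(\Omega^*)$ if $\Lambda$ were a strict improvement, contradicting $\cA^h(\Omega^*) = \inf_{\mathcal{C}_\Omega}\cA^h$ — so $\Omega^*$ locally minimizes $\cA^h$ in $\intrel(K)$. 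Finally, assertion (3), $\Omega^* \in \mathscr{A}^h(U;\eps,\delta)$: given any chain $\Omega^* = \Omega'_0, \dots, \Omega'_m$ testing the almost-minimizing property of $\Omega^*$ in $U$ (with $\mathcal F$-steps $\leq\delta$, $\cA^h(\Omega'_i) \leq \cA^h(\Omega^*)+\delta \leq \cA^h(\Omega)+\delta$, and $\Omega'_i \triangle \Omega^* \subset U$ hence $\Omega'_i \triangle \Omega \subset U$), concatenate it with the admissible chain from $\Omega$ to $\Omega^*$ to get a chain from $\Omega$ testing the almost-minimizing property of $\Omega \in \mathscr{A}^h(U;\eps,\delta)$; that hypothesis yields $\cA^h(\Omega'_m) \geq \cA^h(\Omega) - \eps \geq \cA^h(\Omega^*) - \eps$, which is exactly what is required. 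This completes the proof.
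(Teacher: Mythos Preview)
Your proposal is correct and follows exactly the standard argument that the paper defers to (it simply cites \cite[Lemma 6.7]{zhou-zhu_PMC} and \cite[Lemma 5.7]{zhou-zhu_CMC}); your sketch is in fact a faithful unpacking of those references. One minor simplification for part (1): you do not need to interpolate along a ``short $\mathcal{F}$-continuous path'' from $\Lambda_j$ to $\Omega^*$ --- a single appended step suffices, since $L^1$-convergence $\Lambda_j \to \Omega^*$ gives $\mathcal{F}([\partial^*\Lambda_j]-[\partial^*\Omega^*]) \leq \delta$ for large $j$, while $\Omega^*\triangle\Omega \subset K$ and $\cA^h(\Omega^*) \leq \cA^h(\Omega)+\delta$ follow from closedness of $K$ and lower semicontinuity.
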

\begin{proof}
    The proof follows from the proof of \cite[Lemma 6.7]{zhou-zhu_PMC}, \cite[Lemma 5.7]{zhou-zhu_CMC}.
\end{proof}

\begin{remark}
    If $\partial N$ is a barrier for $(h, P_+, P_-)$, Lemma \ref{lem:constrained_min} (1) and (2) combined with Corollary \ref{cor:local_reg} imply that $\partial^*\Omega^* \cap \intrel(K) = \partial \Omega^* \cap \intrel(K)$ is an $\cA^h$-stable smooth embedded $h$-PMC hypersurface contained in $\interior(N)$.
\end{remark}

\begin{proposition}[Existence of replacements]\label{prop:exist_replacement}
    Let $V \in \mathcal{V}_n(N)$ be $h$-almost minimizing in a relatively open set $U \subset N$. Let $K \subset U$ be a compact subset. Then there exists $V^* \in \mathcal{V}_n(N)$, called an \emph{$h$-replacement} for $V$ in $K$, so that
    \begin{enumerate}
        \item $V \llcorner (N \setminus K) = V^* \llcorner (N \setminus K)$,
        \item $-c\cH^{n+1}(K) \leq \|V\|(N) - \|V^*\|(N) \leq c\cH^{n+1}(K)$,
        \item $V^*$ is $h$-almost minimizing in $U$,
        \item there are $\Omega_i^* \in \mathscr{A}^h(U; \eps_i, \delta_i)$ with $\eps_i, \delta_i \to 0$ so that $\Omega_i^*$ locally minimizes $\cA^h$ in $\intrel(K)$ and $|\partial^*\Omega_i^*| \rightharpoonup V^*$,
        \item if $V$ has $c$-bounded constrained first variation in $N$, then so does $V^*$.
    \end{enumerate}
\end{proposition}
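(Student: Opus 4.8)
The plan is to build $V^*$ as a weak limit of reduced boundaries of the constrained minimizers produced by Lemma \ref{lem:constrained_min}, following the now-standard Pitts-type replacement construction as carried out in \cite[\S6]{zhou-zhu_PMC} and \cite[\S5]{zhou-zhu_CMC}, but keeping careful track of the constrained (one-sided) first variation throughout. Concretely, since $V$ is $h$-almost minimizing in $U$, by definition there are $\eps_i, \delta_i \to 0$ and $\Omega_i \in \mathscr{A}^h(U;\eps_i,\delta_i)$ with $|\partial^*\Omega_i| \rightharpoonup V$. Fix a slightly larger compact set $K' $ with $K \subset \intrel(K') \subset K' \subset U$. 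Apply Lemma \ref{lem:constrained_min} with the pair $(K,K')$ to each $\Omega_i$ to obtain $\Omega_i^* \in \mathscr{A}^h(U;\eps_i,\delta_i)$ that locally minimizes $\cA^h$ in $\intrel(K)$ and agrees with $\Omega_i$ outside $K$ (in the sense $\Omega_i^*\triangle\Omega_i \subset K$). Since $|\partial^*\Omega_i^*|$ has mass uniformly bounded (by $\cA^h(\Omega_i^*)+c\vol(\tilde N) \leq \cA^h(\Omega_i)+\delta_i+c\vol(\tilde N)$, which is bounded), after passing to a subsequence we get $|\partial^*\Omega_i^*| \rightharpoonup V^*$ for some $V^* \in \mathcal{V}_n(N)$. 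This $V^*$ is the claimed replacement.

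Now I verify the five properties. For (3), $V^*$ is $h$-almost minimizing in $U$ because each $\Omega_i^* \in \mathscr{A}^h(U;\eps_i,\delta_i)$ with $\eps_i,\delta_i\to 0$; this is item (3) of Lemma \ref{lem:constrained_min} combined with the definition of $h$-almost minimizing. Item (4) is immediate from the construction ($\Omega_i^*$ are exactly the sets just produced). For (1), outside $K$ we have $\Omega_i^* \triangle \Omega_i \subset K$, so $[\partial^*\Omega_i^*] \llcorner (N\setminus \overline{K}) = [\partial^*\Omega_i]\llcorner (N\setminus\overline K)$; passing to the limit gives $V^* \llcorner (N\setminus \overline K) = V \llcorner (N \setminus \overline K)$, and one upgrades from $N \setminus \overline K$ to $N \setminus K$ using that both varifolds have bounded first variation, hence no mass concentrated on the $\cH^n$-null set $\partial K$ can appear — more carefully, one chooses the exhausting sets so that $\|V\|(\partial K) = 0$ and argues by a standard monotone exhaustion, exactly as in \cite[\S6]{zhou-zhu_PMC}. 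For (2), since $\Omega_i^* \triangle \Omega_i \subset K$,
\[
\big|\,\cH^n(\partial^*\Omega_i^*) - \cH^n(\partial^*\Omega_i)\,\big| = \big|\,\cA^h(\Omega_i^*) - \cA^h(\Omega_i) + \textstyle\int_{\Omega_i^*} h - \int_{\Omega_i} h\,\big|,
\]
and using $\cA^h(\Omega_i^*) \leq \cA^h(\Omega_i)$ from local minimization together with $|\int_{\Omega_i^*}h - \int_{\Omega_i}h| \leq c\,\cH^{n+1}(K)$, one gets the two-sided bound $-c\cH^{n+1}(K) \leq \cH^n(\partial^*\Omega_i^*) - \cH^n(\partial^*\Omega_i) \leq c\,\cH^{n+1}(K) + (\eps_i + \delta_i)$; since $V^* \llcorner (N \setminus K) = V\llcorner(N\setminus K)$ and $\eps_i,\delta_i\to 0$, passing to the limit on the mass in $K$ yields (2). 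Finally (5): if $V$ has $c$-bounded constrained first variation in $N$, then since each $\Omega_i^* \in \mathscr{A}^h(N;\eps_i,\delta_i)$ (note $\mathscr{A}^h(U;\cdot) \subset \mathscr{A}^h(N;\cdot)$ need not hold directly — instead use that $\Omega_i^*$ is locally minimizing in $\intrel(K) \subset \interior(N)$ and agrees with $\Omega_i$, which is $c$-almost minimizing, outside $K$), the varifold $|\partial^*\Omega_i^*|$ has $c$-bounded constrained first variation in $N$, and this inequality (being a weak inequality tested against a fixed cone of vector fields) passes to the weak limit $V^*$. Here one invokes the Proposition in \S\ref{subsec:unique_continuation}'s neighborhood — i.e.\ that almost minimizers have $c$-bounded constrained first variation — applied locally.

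The main obstacle is property (5) (and the precise bookkeeping in (1)): one must confirm that the constrained first variation bound is preserved, which is subtle because the defining inequality $\delta V(X) \geq -c\int_U |X|\,d\|V\|$ only holds for \emph{inward-pointing} test fields $X \in \mathcal{X}_{\mathrm{cpt}}^+(U)$, and the relevant cone of admissible test fields is the same for $V$ and $V^*$ only if one argues carefully over a common open set. The resolution is that outside $K$ the replacement has not changed $V$, so the constrained variation bound there is inherited directly from $V$; inside $\intrel(K)$ the replacement is an actual local minimizer of $\cA^h$, so it satisfies the even stronger (unconstrained, since $\intrel(K) \subset \interior(N)$) Euler–Lagrange inequality $|\delta(|\partial^*\Omega_i^*|)(X)| \leq c\int|X|\,d\|\partial^*\Omega_i^*\|$; on the overlap region one patches using a cutoff exactly as in the proof of Lemma \ref{lem:constrained_variation}, and then lets $i\to\infty$. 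This is routine given the tools already established, so the statement follows as in \cite[Proposition 6.3]{zhou-zhu_PMC} and \cite[Proposition 5.3]{zhou-zhu_CMC}, with the modifications for the constrained setting handled by Lemma \ref{lem:constrained_variation} and the remark following Lemma \ref{lem:constrained_min}.
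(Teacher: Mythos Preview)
Your proposal is correct and follows essentially the same approach as the paper, which simply cites \cite[Proposition 6.8]{zhou-zhu_PMC} and \cite[Proposition 5.8]{zhou-zhu_CMC} (note your citation numbers 6.3/5.3 are off). You have in fact written out more detail than the paper provides; the only minor wrinkles are the unnecessary ``pair $(K,K')$'' (Lemma \ref{lem:constrained_min} uses a single compact $K$) and some looseness in the patching argument for (5), which is cleaner if you simply use a partition of unity subordinate to the cover $\{U,\ N\setminus K\}$ with nonnegative cutoffs (so that the pieces stay in $\mathcal{X}^+_{\mathrm{cpt}}$), invoking almost-minimizing in $U$ for one piece and $V^*\llcorner(N\setminus K)=V\llcorner(N\setminus K)$ for the other.
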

\begin{proof}
    The proof follows \cite[Proposition 6.8]{zhou-zhu_PMC}, \cite[Proposition 5.8]{zhou-zhu_CMC}.
\end{proof}

\begin{proposition}[Regularity of replacements]\label{prop:reg_replace}
    Under the same hypotheses as Proposition \ref{prop:exist_replacement}, if $\partial N$ is a barrier for $(h, P_+, P_-)$, then $V^* \llcorner \intrel(K) = |\Sigma|$ for $\Sigma$ a smooth almost embedded stable $h$-PMC hypersurface in $\intrel(K) \cap \interior(N)$.
\end{proposition}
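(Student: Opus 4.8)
The plan is to combine the existence of replacements from Proposition \ref{prop:exist_replacement} with the interior regularity theory of \cite{zhou-zhu_PMC}, using the barrier hypothesis to push everything into $\interior(N)$ via Corollary \ref{cor:local_reg} and the remark following Lemma \ref{lem:constrained_min}. The key structural idea, exactly as in \cite[\S6]{zhou-zhu_PMC} and \cite[\S5]{zhou-zhu_CMC}, is an iterated replacement argument: take one replacement $V^*$ for $V$ in a ball $\overline{B}_{r_1}(p) \subset \intrel(K)$, then take a second replacement $V^{**}$ for $V^*$ in a concentric ball $\overline{B}_{r_2}(p)$ with $r_2 < r_1$, and use the fact that both replacements are built from genuine local $\cA^h$-minimizers (so, by the remark after Lemma \ref{lem:constrained_min} together with Corollary \ref{cor:local_reg}, are smooth embedded $\cA^h$-stable $h$-PMC hypersurfaces inside $\interior(N)$ away from the glued region) to show $V^*$ is a smooth $h$-PMC hypersurface in the annulus, and then that it glues across $\partial B_{r_2}(p)$ to a smooth almost embedded hypersurface through $p$. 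The almost embeddedness at the gluing locus comes from the maximum principle for $h$-PMC hypersurfaces (both pieces being $C^{1,1}$-close smooth graphs that are ordered, hence touching but not crossing), and the unique continuation property for $h \in \mathcal{S}$ guaranteed in \S\ref{subsec:unique_continuation} via \cite[Theorem 3.11]{zhou-zhu_PMC} is what upgrades the glued object to an honest smooth hypersurface and rules out pathological tangential behavior.

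Concretely, the steps are: (i) Fix $p \in \intrel(K)$ and choose $r_{\mathrm{am}}(p)$ so that $V$ is $h$-almost minimizing in all small annuli centered at $p$; (ii) For $0 < r < r_{\mathrm{am}}(p)$ with $\overline{B}_r(p) \subset \intrel(K)$, apply Proposition \ref{prop:exist_replacement} with $K$ replaced by $\overline{B}_r(p)$ to obtain a replacement $V^*$, which by the remark after Lemma \ref{lem:constrained_min} restricts to a smooth embedded $\cA^h$-stable $h$-PMC hypersurface in $B_r(p) \cap \interior(N)$, and which agrees with $V$ outside $\overline{B}_r(p)$; (iii) On the overlap annulus $A_{\rho,r}^N(p)$ for $\rho < r$, $V^*$ is $h$-almost minimizing (Proposition \ref{prop:exist_replacement}(3)) and has $c$-bounded constrained first variation (Proposition \ref{prop:exist_replacement}(5)); take a further replacement $V^{**}$ for $V^*$ in $\overline{B}_\rho(p)$; (iv) Both $V^*$ and $V^{**}$ are smooth $h$-PMC in the annulus $A_{\rho,r}(p)$ and agree there; since $V^{**}$ is smooth through $p$ as well, and $h$-PMC hypersurfaces satisfy unique continuation for $h \in \mathcal{S}$, the two surfaces coincide on the annulus and hence $V^*$ extends smoothly across $\partial B_\rho(p)$; letting $\rho \to 0$ shows $V^* = |\Sigma|$ for $\Sigma$ a smooth $h$-PMC hypersurface in all of $B_r(p) \cap \interior(N)$; (v) Standard interior regularity of \cite[Theorem 7.1]{zhou-zhu_PMC} handles the points of $\mathrm{spt}\|V\| \setminus \overline{B}_{r}(p)$ inside $\intrel(K)$, and a covering argument over $\intrel(K)$ assembles these local conclusions, with almost embeddedness at gluing interfaces following from the ordering of sheets provided by the obstacle-style construction and the maximum principle; (vi) Finally, Corollary \ref{cor:local_reg} and the argument in its last paragraph guarantee $\Sigma \subset \interior(N)$, since any component touching $\partial N$ would violate the barrier condition via the Hopf lemma.

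The main obstacle I anticipate is step (iv): ensuring that the replacement $V^*$ is \emph{regular across the sphere} $\partial B_\rho(p)$, i.e. that the two smooth $h$-PMC pieces (the replacement inside $B_\rho(p)$ and the original $V^*$ outside) glue $C^\infty$ rather than merely matching as varifolds with a possible corner or multiplicity jump. This is precisely where the unique continuation property enters and why the hypothesis $h \in \mathcal{S}$ (equivalently, $h$ extending to an element of $\tilde{\mathcal S}$, whose zero set has mean curvature vanishing to finite order) is needed — it prevents two distinct $h$-PMC sheets from being tangent to infinite order along a hypersurface, which would obstruct the gluing. A secondary subtlety is that, because we only assume a one-sided barrier condition, one must confirm that the \emph{orientation} of the replacement pieces relative to the enclosed region is consistent (recorded in the constraint $\Omega \in \cCEE(N)$), so that when sheets of $\Sigma$ touch they do so from the correct side; this is exactly the content of the almost embeddedness assertion and follows from tracking the finite-perimeter sets $\Omega_i^*$ through the replacement construction, as in \cite{zhou-zhu_PMC}. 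Beyond these two points, the argument is a faithful adaptation of the now-standard replacement-and-gluing scheme, with $\interior(N)$ playing the role of the ambient manifold and Corollary \ref{cor:local_reg} supplying the boundary exclusion.
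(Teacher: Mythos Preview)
You have conflated this proposition with the full regularity theorem for almost minimizing varifolds. The iterated replacement, gluing-across-spheres, and unique continuation machinery you outline is the content of \cite[Theorem 7.1]{zhou-zhu_PMC} (and, in this paper, of Proposition \ref{prop:int} and the theorem following it), where one must show that the \emph{original} varifold $V$ is smooth. The present proposition is much more modest: it only asks about the replacement $V^*$ itself, restricted to $\intrel(K)$, and for this one never needs to glue anything.

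The paper's argument is a two-line compactness argument. By Proposition \ref{prop:exist_replacement}(4), $V^*$ is the varifold limit of $|\partial^*\Omega_i^*|$ where each $\Omega_i^*$ locally minimizes $\cA^h$ in $\intrel(K)$. The barrier hypothesis feeds directly into Corollary \ref{cor:local_reg} (equivalently, the remark after Lemma \ref{lem:constrained_min}), so each $\partial\Omega_i^* \cap \intrel(K)$ is already a smooth, embedded, $\cA^h$-stable $h$-PMC hypersurface lying in $\interior(N)$. Now apply the compactness theory for stable $h$-PMC hypersurfaces with uniformly bounded area \cite[Theorem 3.6]{zhou-zhu_PMC}: the limit $V^*\llcorner\intrel(K)$ is the varifold of a smooth almost embedded stable $h$-PMC hypersurface, and the maximum principle together with the barrier condition keeps it in $\interior(N)$. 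No second replacement, no unique continuation, and in particular no assumption $h\in\mathcal{S}$ is needed here --- that hypothesis enters only later, in Proposition \ref{prop:int}, precisely to run the gluing argument you described. Your steps (iii)--(v) are not wrong in spirit, but they belong to the next stage of the program, and invoking them here both overcomplicates the proof and imports a hypothesis the proposition does not assume.
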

\begin{proof}
    As in \cite[Proposition 6.9]{zhou-zhu_PMC}, \cite[Proposition 5.9]{zhou-zhu_CMC}, the result follows from the regularity of local minimizers in Corollary \ref{cor:local_reg} (which applies because we assumed that $\partial N$ is a barrier) and the compactness theory for stable $h$-PMC hypersurfaces with uniformly bounded area (see \cite[Theorem 3.6]{zhou-zhu_PMC}). The fact that the limit lies in $\interior(N)$ follows from the fact that $\partial N$ is a barrier and the maximum principle.
\end{proof}

\begin{proposition}[Tangent cones are planar]\label{prop:tangent_plane}
    Suppose $\partial N$ is a barrier for $(h, P_+, P_-)$. Suppose $V \in \mathcal{V}_n(N)$ has $c$-bounded constrained first variation in $N$ and is $h$-almost minimizing in a relatively open set $U \subset N$. For any $y \in \partial N \cap U \cap \mathrm{spt}\|V\|$ and $C \in \mathrm{VarTan}(V, y)$,
    \[ C = m |T_y\partial N| \]
    for some $m \in \N$.
\end{proposition}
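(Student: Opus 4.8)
The plan is to leverage the replacement technology (Propositions \ref{prop:exist_replacement} and \ref{prop:reg_replace}) to reduce the analysis of $\mathrm{VarTan}(V,y)$ at a boundary point $y \in \partial N$ to the classical monotonicity/rigidity argument, exactly as in the interior case treated in \cite{zhou-zhu_PMC}, but carried out relative to the obstacle $\partial N$. First I would fix $y \in \partial N \cap U \cap \mathrm{spt}\|V\|$ and a radius $r_0 > 0$ small enough that $V$ is $h$-almost minimizing in $A_{s,r}^N(y)$ for $0 < s < r \leq r_0$ (using that $V$ is $h$-almost minimizing in the relatively open set $U$, and shrinking $r_0$ so $B_{r_0}^N(y) \subset U$). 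Since $V$ has $c$-bounded constrained first variation in $N$, Lemma \ref{lem:constrained_variation} gives that $V$ has $C$-bounded first variation in $\tilde{N}$, so the monotonicity formula holds (with the usual exponential correction factor) and the density $\Theta(\|V\|, y)$ is well-defined; moreover every $C \in \mathrm{VarTan}(V,y)$ is a stationary cone in $T_y\tilde{N} = \R^{n+1}$ with $\|C\|(B_1)$ equal to $\omega_n \Theta(\|V\|,y)$. The barrier hypothesis enters to guarantee, via Corollary \ref{cor:local_reg} and Proposition \ref{prop:reg_replace}, that the relevant replacements stay away from $\partial N$ in their interior and are smooth, which is what lets us upgrade regularity near $y$.

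The heart of the argument is the replacement-based unique continuation / regularity scheme. Following the structure of the interior argument, I would take successive replacements: choose $0 < s_1 < t_1 < s_2 < t_2 < r_0$ and let $V_1$ be an $h$-replacement for $V$ in $\overline{A_{s_1,t_1}^N(y)}$, then $V_2$ an $h$-replacement for $V_1$ in $\overline{B_{s_1}^N(y)}$ (or the appropriate nested annuli/balls), using Proposition \ref{prop:exist_replacement} to retain $c$-bounded constrained first variation at each stage and Proposition \ref{prop:reg_replace} to know each replacement is a smooth almost embedded stable $h$-PMC hypersurface in the region where the minimization was performed, automatically contained in $\interior(N)$ away from $\partial N$. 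On the annular pieces the replacement is a smooth stable $h$-PMC hypersurface; on such a hypersurface the first variation is controlled by $\sup|h|$, so after blowing up at $y$ the rescalings of $V$ and of the replacements converge to stationary cones, and gluing the smooth annular pieces (as in \cite{pitts} and \cite{zhou-zhu_PMC}) forces the blow-up limit to be a (multiplicity $m$) plane. The unique continuation property for $h$-PMC hypersurfaces with $h \in \mathcal{S}$ (from \S\ref{subsec:unique_continuation}, via \cite[Theorem 3.11]{zhou-zhu_PMC}) is used to ensure that the replacement agrees with $V$ wherever both are smooth, so that the limiting cone $C$ is genuinely a tangent cone of $V$ and not just of some replacement.

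Finally I would identify which plane appears: because $y \in \partial N$ and $V$ is supported in $N$, the support of any tangent cone $C$ lies in the closed half-space $\{X \in T_y\tilde{N} : \langle X, \bar\nu\rangle \geq 0\}$ (where $\bar\nu$ is the inward normal to $\partial N$ at $y$). A stationary (with respect to the constrained first variation, i.e. stationary for all test fields pointing into the half-space) integral $n$-cone whose support lies in a closed half-space of $\R^{n+1}$ and which, by the previous paragraph, is a plane with integer multiplicity, must be $m|T_y\partial N|$: a plane through the origin contained in a closed half-space bounded by the hyperplane $T_y\partial N$ is forced to equal that bounding hyperplane. Hence $C = m|T_y\partial N|$ for some $m \in \N$. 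The main obstacle I anticipate is the bookkeeping in the replacement argument at the boundary — verifying that each replacement can be taken in a region whose interior avoids $\partial N$ (so that Proposition \ref{prop:reg_replace}'s smoothness and the interior $h$-PMC compactness of \cite[Theorem 3.6]{zhou-zhu_PMC} apply), and that the $c$-bounded constrained first variation is preserved through the nested replacements so the monotonicity formula and blow-up are legitimate at every stage; once that is in place, the half-space confinement makes the final identification of the cone essentially immediate.
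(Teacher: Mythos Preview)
Your proposal is correct and follows the same route as the paper's two-line proof: apply the replacement-based argument of \cite[Propositions 6.10--6.11]{zhou-zhu_PMC} (which carries over verbatim thanks to Lemma \ref{lem:constrained_variation} and Proposition \ref{prop:reg_replace}) to conclude that every tangent cone is an integer hyperplane in $T_y\tilde{N}$, and then use $\mathrm{spt}\|V\| \subset N$ to force that hyperplane into the closed half-space and hence equal to $T_y\partial N$. One minor over-elaboration: the successive replacements and the unique continuation hypothesis $h \in \mathcal{S}$ you invoke are not needed (nor is $h \in \mathcal{S}$ assumed) for this proposition---a single annular replacement suffices for planarity, and the gluing/unique continuation enters only later in Proposition \ref{prop:int}.
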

\begin{proof}
    The arguments of \cite[Proposition 6.10, 6.11]{zhou-zhu_PMC}, \cite[Proposition 5.10, 5.11]{zhou-zhu_CMC} apply to guarantee that any tangent cone is an integer multiple of a hyperplane in $T_y\tilde{N}$. Since $\mathrm{spt}\|V\| \subset N$, this hyperplane must be contained in the closed halfspace $\overline{T_y^+N}$ and containing the origin, and therefore must equal $T_y\partial N$.
\end{proof}

We can now use the properties of replacements to exclude boundary touching when we choose $h \in \mathcal{S}$.

\begin{proposition}[Interior for almost minimizing]\label{prop:int}
    Suppose $\partial N$ is a barrier for $(h, P_+, P_-)$ and $h \in \mathcal{S}$. Suppose $V \in \mathcal{V}_n(N)$ has $c$-bounded constrained first variation in $N$ and is $h$-almost minimizing in a relatively open set $U \subset N$. Then $\mathrm{spt}\|V\| \cap U \cap \partial N = \varnothing$.
\end{proposition}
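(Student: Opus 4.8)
The plan is to argue by contradiction: suppose $y \in \mathrm{spt}\|V\| \cap U \cap \partial N$. Because $V$ is $h$-almost minimizing in $U$, Proposition \ref{prop:exist_replacement} produces an $h$-replacement $V^*$ for $V$ in a small closed ball $K = \overline{B_\rho^N(y)} \subset U$, and by Proposition \ref{prop:reg_replace} we know $V^* \llcorner \intrel(K)$ is (the varifold of) a smooth almost embedded stable $h$-PMC hypersurface $\Sigma^*$ lying in $\interior(N)$. By Proposition \ref{prop:exist_replacement}(1), $V$ and $V^*$ agree outside $K$; in particular $y \in \mathrm{spt}\|V^*\|$ if $\rho$ is chosen so that $y$ is not an isolated point of $\mathrm{spt}\|V\|$ restricted to the closure — more carefully, one first notes that by the monotonicity formula coming from $c$-bounded first variation (via Lemma \ref{lem:constrained_variation}), $\|V\|(B_s^N(y)) > 0$ for all $s > 0$, and then one uses this together with property (1) to deduce $y \in \mathrm{spt}\|V^*\|$. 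So $\Sigma^*$ accumulates at the boundary point $y$.

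Next I would take a \emph{second} replacement: replace $V^*$ in a smaller ball $K' = \overline{B_{\rho'}^N(y)}$ to obtain $V^{**}$, which on $\intrel(K')$ is again a smooth almost embedded stable $h$-PMC hypersurface $\Sigma^{**}$ in $\interior(N)$, and which agrees with $V^*$ outside $K'$. The point of iterating replacements (exactly as in \cite[Proposition 6.10, 6.11]{zhou-zhu_PMC} and \cite[Proposition 5.10, 5.11]{zhou-zhu_CMC}) is the usual one: on the overlap region $\intrel(K') \setminus K$ (or an annulus between the two balls) both $\Sigma^*$ and $\Sigma^{**}$ are smooth stable $h$-PMC hypersurfaces that glue to a $C^1$, hence (by elliptic regularity plus $h \in \mathcal{S}$ and the unique continuation of \cite[Theorem 3.11]{zhou-zhu_PMC}) smooth $h$-PMC hypersurface across $\partial K$. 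This gluing lets me regard a single smooth connected sheet $\Sigma$ of the replacement passing through $y \in \partial N$.

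Now the main point. By Proposition \ref{prop:tangent_plane}, every varifold tangent cone of $V^{**}$ at $y$ is $m|T_y\partial N|$ for some integer $m \geq 1$, so the smooth sheet $\Sigma$ through $y$ is tangent to $\partial N$ at $y$ and lies in $\overline{N}$ near $y$, i.e.\ $\Sigma$ touches $\partial N$ from the inside at $y$. Write $\partial N \cap W = P_+ \cap W$ or $P_- \cap W$ near $y$; in the first case $\Omega^{**}$ lies on the sublevel side, in the second on the superlevel side, consistently with $\Omega \in \cCEE(N)$. Comparing $\Sigma$ with $\partial N$ via the difference of the two graphing functions in Fermi coordinates, the PMC equation for $\Sigma$ becomes a second-order uniformly elliptic equation for the nonnegative difference $w$, which vanishes to second order at $y$; the strong maximum principle / Hopf lemma forces $w \equiv 0$, so $\Sigma = \partial N$ near $y$ — but then the mean curvature identity $H_{P_+, E_+} = h|_{P_+}$ (resp.\ $H_{P_-, E_-} = -h|_{P_-}$) contradicts the barrier hypothesis $H_{P_+, E_+} < h|_{P_+}$ (resp.\ $H_{P_-,E_-} < -h|_{P_-}$). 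This contradiction shows $\mathrm{spt}\|V\| \cap U \cap \partial N = \varnothing$.

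The step I expect to be the main obstacle is the gluing/regularity across $\partial K$ that upgrades the two replacements to a single smooth $h$-PMC sheet through $y$: one must be careful that the replacement hypersurfaces are stable $h$-PMC on each side with a common $C^{1,\alpha}$ trace (this is where Theorem \ref{thm:obstacle} and Corollary \ref{cor:local_reg} enter, since a priori the replacements only lie in $\interior(N)$ away from the obstacle), and that $h \in \mathcal{S}$ supplies the unique continuation needed to rule out the sheet being a proper subset of $\partial N$ before invoking Hopf. Once the single smooth sheet is in hand, the Hopf-lemma contradiction with the barrier inequality is routine.
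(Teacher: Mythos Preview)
There is a genuine gap at precisely the step you flag as delicate: you claim $y \in \mathrm{spt}\|V^*\|$ after replacing in $K = \overline{B_\rho^N(y)}$, but the justification does not go through. Property (1) of Proposition~\ref{prop:exist_replacement} gives $V^* = V$ only on $N \setminus K$, and $y$ lies in $\intrel(K)$; monotonicity for $V$ controls $\|V\|$ near $y$, not $\|V^*\|$. Worse, Proposition~\ref{prop:reg_replace} tells you that $\mathrm{spt}\|V^*\| \cap \intrel(K) \subset \interior(N)$, so $y \in \partial N \cap \intrel(K)$ can only lie in $\mathrm{spt}\|V^*\|$ if the interior replacement hypersurface happens to accumulate there, and nothing forces this. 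Your second replacement in a concentric smaller ball $K' \subset K$ cannot help either: since $V^*$ is already smooth on $\intrel(K) \cap \interior(N)$ and $V^{**} = V^*$ outside $K'$, gluing and unique continuation simply give $V^{**} = V^*$ everywhere, and you never obtain a smooth sheet that is known to reach $y$.

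The paper's proof avoids this by \emph{offsetting} the first ball. It picks $x \in U \cap \partial N$ with $d(x,y) = d$ small and $\partial B_d(x)$ transverse to $\partial N$, and replaces $V$ in $\overline{B_d^N(x)}$. Now $y$ lies on $\partial B_d^N(x)$, and since every tangent cone of $V$ at $y$ is $m|T_y\partial N|$ (Proposition~\ref{prop:tangent_plane}) with $T_y\partial N$ transverse to $\partial B_d(x)$, mass of $V$ persists \emph{outside} $B_d^N(x)$ near $y$, giving $y \in \mathrm{spt}\|V'\|$. The second replacement is taken in the annulus $\overline{A_{d/2,3d/2}^N(x)}$, whose relative interior contains $y$; Proposition~\ref{prop:reg_replace} then gives $y \notin \mathrm{spt}\|V''\|$. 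On the other hand, the gluing and unique continuation of \cite[Theorem~7.1]{zhou-zhu_PMC} (applied in $\interior(N)$ across $\partial B_{d/2}(x)$) force $V'' = V'$ on $A_{d/2,d}^N(x)$, and the tangent-cone argument applied now to $V'$ (still $h$-almost minimizing) shows that any varifold agreeing with $V'$ on this sub-annulus has $y$ in its support. This yields $y \in \mathrm{spt}\|V''\|$, a contradiction; no Hopf-lemma touching argument at $y$ is needed.
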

\begin{proof}
    Suppose for contradiction that $y \in U \cap \partial N \cap \mathrm{spt}\|V\|$. Let $x \in U \cap \partial N$ with $d(x, y) = d$ small so that $\partial B_d(x)$ intersects $\partial N$ transversally and $B_{2d}(x) \cap N \subset U$.

    Let $V'$ be an $h$-replacement for $V$ in $\overline{B_d^N(x)}$. By Proposition \ref{prop:tangent_plane} and the assumption of transversality, $y \in \mathrm{spt}\|V'\|$. Since $V'$ is almost minimizing, Proposition \ref{prop:tangent_plane} implies that every tangent cone to $V'$ at $y$ is an integer multiple of $T_y\partial N$. Hence, any varifold $V^*$ that agrees with $V'$ on $A_{d/2, d}^N(x)$ has $y \in \mathrm{spt}\|V^*\|$.

    Let $V''$ be an $h$-replacement for $V'$ in $\overline{A_{d/2, 3d/2}^N(x)}$. By Proposition \ref{prop:reg_replace},
    \[ \mathrm{spt}\|V''\| \cap A_{d/2, 3d/2}^N(x) \cap \partial N = \varnothing, \]
    so $y \notin \mathrm{spt}\|V''\|$. However, by the gluing and unique continuation arguments from the proof of \cite[Theorem 7.1]{zhou-zhu_PMC} (which apply because they are local and the gluing happens in $\interior(N)$, and we assumed $h \in \mathcal{S}$), $V''$ agrees with $V'$ on $A_{d/2, d}^N(x)$, so $y \in \mathrm{spt}\|V''\|$ by the above paragraph. Hence, we reach a contradiction.
\end{proof}

We now use a removable singularities result to deduce the same conclusion for varifolds that are $h$-almost minimizing in annuli.

\begin{theorem}
    Suppose $\partial N$ is a barrier for $(h, P_+, P_-)$ and $h \in \mathcal{S}$. Suppose $V \in \mathcal{V}_n(N)$ has $c$-bounded constrained first variation in $N$ and is $h$-almost minimizing in small annuli. Then $V$ is the varifold of a smooth almost embedded $h$-PMC hypersurface in $\mathrm{Int}(N)$.
\end{theorem}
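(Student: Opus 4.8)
The plan is to reduce the statement about varifolds that are $h$-almost minimizing in small annuli to the already-established Proposition \ref{prop:int} (which handles $h$-almost minimizing in a relatively open set) by means of a removable singularities argument. First I would fix $V$ as in the hypotheses and observe that, by the interior regularity theory of \cite[Theorem 7.1]{zhou-zhu_PMC} applied in $\interior(N)$, the set $\mathrm{spt}\|V\| \cap \interior(N)$ is already a smooth almost embedded $h$-PMC hypersurface away from its intersection with $\partial N$. So the only thing to rule out is that $\mathrm{spt}\|V\|$ reaches $\partial N$. The idea is to show that near any $y \in \partial N$ the varifold is in fact $h$-almost minimizing in a full relatively open neighborhood (not just in annuli centered at $y$), so that Proposition \ref{prop:int} applies directly and forces $y \notin \mathrm{spt}\|V\|$.

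The key step is the removable singularities / "good replacement" mechanism: given $y \in \partial N \cap \mathrm{spt}\|V\|$, choose a small $x \in \partial N$ close to $y$ with $d(x,y) = d$ so that $\partial B_d(x)$ meets $\partial N$ transversally and $B_{2d}^N(x) \subset \bigcap_{p} A^N_{s,r}(p)$-type regions where $V$ is $h$-almost minimizing. Then one produces an $h$-replacement $\hat V$ for $V$ inside $\overline{B_d^N(x)}$; by Proposition \ref{prop:reg_replace} the replacement is a smooth almost embedded stable $h$-PMC hypersurface in the interior, and in particular it is regular and lies in $\interior(N)$ there. One then argues, exactly as in the proof of \cite[Theorem 7.1]{zhou-zhu_PMC} (the gluing-plus-unique-continuation step, using $h \in \mathcal{S}$, and noting all the gluing happens in $\interior(N)$ where the usual theory is available), that $\hat V = V$ on the annular region, hence $V$ itself is regular across $B_{d/2}^N(x)$ and, being a smooth $h$-PMC hypersurface there touching $\partial N$, must either coincide locally with $\partial N$ — contradicting the barrier condition and the first variation as in Corollary \ref{cor:local_reg} — or be disjoint from $\partial N$, contradicting $y \in \mathrm{spt}\|V\|$. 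Alternatively, one packages this cleanly by noting that the removable singularity argument upgrades "$h$-almost minimizing in small annuli" to "$h$-almost minimizing in a small relatively open ball around $y$," at which point Proposition \ref{prop:int} gives $\mathrm{spt}\|V\| \cap \partial N = \varnothing$ outright.

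Once the boundary is excluded, i.e.\ $\mathrm{spt}\|V\| \subset \interior(N)$, the situation is entirely interior and the unmodified min-max regularity theory applies: the local regularity and removable singularities results of \cite[Theorem 7.1]{zhou-zhu_PMC} (which only require $V$ to be $h$-almost minimizing in small annuli, to have bounded first variation — available here via Lemma \ref{lem:constrained_variation} — and $h \in \mathcal{S}$ for unique continuation) show that $V = |\Sigma|$ for a smooth almost embedded $h$-PMC hypersurface $\Sigma$. Since $\Sigma$ is closed (its support is a compact subset of $\interior(N)$) this is the desired conclusion.

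I expect the main obstacle to be the boundary-exclusion step, specifically making rigorous that the replacement $\hat V$ agrees with $V$ on the overlap annulus so that regularity propagates up to the boundary: this is where one must invoke the transversality of $\partial B_d(x)$ with $\partial N$, the planarity of tangent cones at boundary points (Proposition \ref{prop:tangent_plane}) to guarantee $y$ persists in the support after replacement, and the unique continuation property for $h \in \mathcal{S}$ — exactly the ingredients assembled in Proposition \ref{prop:int}, so in practice the cleanest route is to state and use a lemma asserting that $h$-almost minimizing in small annuli at a point implies $h$-almost minimizing in a small relatively open neighborhood of that point, and then quote Proposition \ref{prop:int}. The remaining steps are routine given the interior theory of \cite{zhou-zhu_PMC}.
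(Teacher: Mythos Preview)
Your overall architecture (handle the interior by \cite[Theorem 7.1]{zhou-zhu_PMC}, exclude the boundary separately) matches the paper, but your boundary-exclusion step is off in a way that matters.

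The paper does \emph{not} try to upgrade ``$h$-almost minimizing in small annuli'' to ``$h$-almost minimizing in a small ball,'' and in fact no such lemma is available: removable singularities is a statement about regularity of $V$, not about the almost minimizing property. Your ``clean route'' therefore rests on a false step. Your first approach (build a replacement in $\overline{B_d^N(x)}$ and glue) also has a gap: to take a replacement on $\overline{B_d^N(x)}$ you need $V$ to be $h$-almost minimizing in a relatively open set containing that ball, but the hypothesis only gives almost minimizing in \emph{annuli centered at each point}, and since $y \in \overline{B_d^N(x)}$ there is no reason for such a ball to sit inside one of those annuli.

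The paper's route is simpler and avoids this: for each $p \in \partial N$, $V$ is $h$-almost minimizing in every $A_{s,r}^N(p)$ with $0 < s < r \leq r_{\text{am}}(p)$, so Proposition \ref{prop:int} applied to each such annulus gives $\mathrm{spt}\|V\| \cap \partial N \cap A_{s,r}^N(p) = \varnothing$; letting $s \to 0$ leaves $\{p\}$ as the only possible boundary touching point in $B_{r_{\text{am}}(p)}^N(p)$. Hence $\mathrm{spt}\|V\| \cap \partial N$ is a finite set of isolated points. Now the interior removable singularities argument (\cite[Proof of Theorem 7.1, Step 4]{zhou-zhu_PMC}) shows $V$ is the varifold of a smooth almost embedded $h$-PMC hypersurface that touches $\partial N$ tangentially at finitely many points, and the standard maximum principle (using that $\partial N$ is a barrier) rules those out. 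So the logic is: Proposition \ref{prop:int} on annuli $\Rightarrow$ isolated boundary intersection $\Rightarrow$ removable singularities $\Rightarrow$ smooth hypersurface with finitely many tangential touchings $\Rightarrow$ maximum principle $\Rightarrow$ $\mathrm{spt}\|V\| \subset \interior(N)$, which is the step you should replace your middle paragraph with.
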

\begin{proof}
    By Proposition \ref{prop:int}, $\mathrm{spt}\|V\|\cap \partial N$ consists of finitely many isolated points. By removable singularities (see \cite[Proof of Theorem 7.1, Step 4]{zhou-zhu_PMC}), $V$ is the varifold of a smooth almost embedded $h$-PMC hypersurface, with finitely many isolated points touching $\partial N$ tangentially. Since $\partial N$ is a barrier, the standard maximum principle implies $\mathrm{spt}\|V\| \cap \partial N = \varnothing$.
\end{proof}

Now that we know that $V$ avoids the boundary, Theorem \ref{thm:basic_min-max} follows as in \cite{zhou-zhu_PMC}.

\section{Min-max with barrier in generic metrics}\label{sec:generic}

We use special results applicable to a generic set of metrics and prescribing functions to upgrade the conclusions of Theorem \ref{thm:basic_min-max}.

\subsection{Good pairs}

Let $(N, \partial N)$ be a compact manifold with smooth boundary. We say a pair $(g, h)$, where $g$ is a smooth Riemannian metric on $N$ and $h \in C^{\infty}(N)$, is a \emph{good pair}, if
\begin{itemize}
    \item $h \in \mathcal{S}(g)$\footnote{Here we include explicitly in the notation the dependence of $\mathcal{S}$ on the metric, since we consider varying metrics on $N$.} as defined in 
    \S\ref{subsec:unique_continuation}
    \item every closed two-sided almost embedded $h$-PMC hypersurface in $\interior(N)$ is embedded and $\cA^h$-nondegenerate.
\end{itemize}

The following result is a consequence of \cite[Lemma 3.5]{zhou_mult}, building on \cite[Proposition 0.2]{zhou-zhu_PMC} and \cite[Theorems 34 and 35]{white_transverse}.

\begin{theorem}\label{thm:generic}
    Let $N$ be a compact manifold with smooth boundary. The set of good pairs is $C^{\infty}$ dense.
\end{theorem}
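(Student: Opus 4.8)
The plan is to combine two genericity inputs: first, genericity of the prescribing function relative to a fixed metric (via the results in \S\ref{subsec:unique_continuation}), and second, Whiteâ€“type bumpy-metric results for the pair $(g,h)$ applied in the setting with boundary. The statement to prove is that the set of good pairs is $C^\infty$ dense in the space of all pairs $(g,h)$, where $g$ is a smooth metric on $N$ and $h\in C^\infty(N)$. So I would fix an arbitrary pair $(g_0,h_0)$ and produce good pairs $C^\infty$-close to it.

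First I would observe that the condition $h\in\mathcal{S}(g)$ is an open and dense condition: density is exactly the Proposition in \S\ref{subsec:unique_continuation}, and openness follows because $\tilde{\mathcal{S}}\subset C^\infty(\tilde N)$ is open (by \cite[Proposition 0.2]{zhou-zhu_PMC}) and restriction is continuous. So after a small perturbation I may assume $h\in\mathcal{S}(g)$ and it suffices to arrange the second bullet (every closed two-sided almost embedded $h$-PMC hypersurface in $\interior(N)$ is embedded and $\cA^h$-nondegenerate) while staying inside $\mathcal{S}(g)$. Here I would invoke \cite[Lemma 3.5]{zhou_mult}, which is precisely the statement that the pairs $(g,h)$ for which every closed almost embedded $h$-PMC hypersurface is embedded and nondegenerate form a $C^\infty$-dense (indeed generic) set; this builds on the embeddedness/transversality results \cite[Theorems 34, 35]{white_transverse} and the structure theory of \cite[Proposition 0.2]{zhou-zhu_PMC}. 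The one point requiring care is that \cite{zhou_mult} works on closed manifolds, whereas here $N$ has boundary. To handle this, I would pass to the closed ambient manifold $\tilde N$ from \S\ref{sec:compact_setup}: perturb $\tilde g$ and $\tilde h$ on $\tilde N$ so that the pair is good on $\tilde N$ in the sense of \cite{zhou_mult}, then restrict to $N$. Since every closed two-sided almost embedded $h$-PMC hypersurface contained in $\interior(N)$ is in particular such a hypersurface in $\tilde N$, the embeddedness and nondegeneracy conclusions are inherited. Restriction is continuous for the $C^\infty$ topologies, so $C^\infty$ density on $\tilde N$ gives $C^\infty$ density on $N$.

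The main obstacle I anticipate is reconciling the perturbation spaces: an arbitrary pair $(g,h)$ on $N$ need not extend to a pair on $\tilde N$ in a way compatible with the decomposition $\tilde N\setminus N=E_+\sqcup E_-$ and the barrier structure, and I must ensure the perturbation used to reach a good pair can be taken supported in (a neighborhood of) $N$ so that it does not interfere with whatever barrier hypotheses are imposed elsewhere, and so that it genuinely perturbs $(g|_N,h|_N)$. Concretely I would choose an extension $(\tilde g,\tilde h)$ of $(g,h)$, apply the genericity result of \cite{zhou_mult} on $\tilde N$ to get arbitrarily close good pairs $(\tilde g_i,\tilde h_i)$, and note that the restrictions $(\tilde g_i|_N,\tilde h_i|_N)$ converge to $(g,h)$ in $C^\infty(N)$ and are good pairs on $N$ by the inheritance argument above. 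The density of $\mathcal{S}(g)$ should be folded in simultaneously: the good-pair set of \cite{zhou_mult} on $\tilde N$ already lies inside $\tilde{\mathcal{S}}$ (nondegeneracy forces the relevant transversality/finite-order vanishing conditions), so no separate step is needed. This completes the argument.
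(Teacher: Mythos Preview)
Your proposal is correct and follows essentially the same route as the paper: extend $(g,h)$ to $(\tilde g,\tilde h)$ on the closed manifold $\tilde N$, apply \cite[Lemma 3.5]{zhou_mult} to obtain good pairs $(\tilde g_i,\tilde h_i)\to(\tilde g,\tilde h)$, and restrict to $N$, using that restriction of a good pair is a good pair. Your worries about compatibility with the barrier decomposition are unnecessary here (Theorem~\ref{thm:generic} has no barrier hypothesis), and your initial two-step perturbation is superfluous since, as you note at the end, membership in $\tilde{\mathcal{S}}$ is already part of the good-pair condition in \cite{zhou_mult}.
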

\begin{proof}
    Let $(g, h)$ be a Riemannian metric and smooth function on $N$. Let $(\tilde{g}, \tilde{h})$ be any extension of $g$ and $h$ to $\tilde{N}$. By \cite[Lemma 3.5]{zhou_mult}, there is a sequence of good pairs $(\tilde{g}_i, \tilde{h}_i)$ for $\tilde{N}$ in the sense of \cite[\S3.3]{zhou_mult} converging smoothly to $(\tilde{g}, \tilde{h})$. Since the restriction to $N$ of a good pair on $\tilde{N}$ is a good pair on $N$ by definition, $(\tilde{g}_i\mid_N, \tilde{h}_i\mid_N)$ is a sequence of good pairs converging smoothly to $(g, h)$, as desired.
\end{proof}

\subsection{Minimality for strict stability} 
We show, based on the arguments of \cite{inauen-marchese}, that a strictly $\cA^h$-stable hypersurface minimizes $\cA^h$ in a small flat neighborhood.

\begin{theorem}\label{thm:minimality}
    Let $(M, g)$ be a smooth closed Riemannian manifold. Let $\Sigma = \partial \Omega$ be a smooth closed embedded $h$-PMC hypersurface which is strictly $\cA^h$-stable. Then there exists $\eps > 0$ (depending on $\Omega$ and $M$) so that
    \[ \cA^h(\Omega') > \cA^h(\Omega) \]
    for all $\Omega' \in \mathcal{C}(M)$ with $0 < \mathcal{F}([\partial \Omega] - [\partial^* \Omega']) < \eps$.
\end{theorem}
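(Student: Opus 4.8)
The plan is to follow the strategy of \cite{inauen-marchese}: convert the strict $\cA^h$-stability of $\Sigma$ into a quantitative lower bound on $\cA^h$ in a tubular neighborhood, and then use the flat-norm bound to trap any competitor $\Omega'$ inside such a neighborhood (possibly after discarding a small ``far away'' piece, which costs nothing). Concretely, I would first fix a tubular neighborhood $T_\rho = \{x : \mathrm{dist}(x, \Sigma) < \rho\}$ of $\Sigma$ in which the nearest-point projection $\pi : T_\rho \to \Sigma$ and the signed distance function $d$ are smooth, and write $T_\rho \cong \Sigma \times (-\rho, \rho)$ via Fermi coordinates. In these coordinates a competitor $\Omega'$ that coincides with $\Omega$ outside $T_\rho$ is described by its fiberwise slices, and one compares $\cA^h(\Omega')$ with $\cA^h(\Omega)$ by the coarea formula together with the first and second variation of $\cA^h$. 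Because $\Sigma$ is a critical point, the first-order term vanishes, and strict $\cA^h$-stability (which, via the nondegeneracy remark earlier, is equivalent to $Q^h_\Sigma \geq \lambda \int_\Sigma \varphi^2$ for some $\lambda > 0$) gives that the second-order term dominates the higher-order errors once $\rho$ is small; this yields a constant $c_0 > 0$ and $\rho > 0$ with
\[ \cA^h(\Omega') \geq \cA^h(\Omega) + c_0 \, \mathcal{F}([\partial \Omega] - [\partial^*\Omega'])^2 \]
for all $\Omega'$ that agree with $\Omega$ outside $T_\rho$ and are sufficiently flat-close to $\Omega$.

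Next I would handle a general competitor $\Omega'$ with $0 < \mathcal{F}([\partial\Omega] - [\partial^*\Omega']) < \eps$. The point (following \cite[\S3--4]{inauen-marchese}, and the use of Lemma \ref{lem:flat_dominates_L1}) is that flat-closeness of the boundaries implies $L^1$-closeness of $\Omega$ and $\Omega'$, hence $\partial^*\Omega'$ has almost all of its mass inside $T_\rho$; moreover one can modify $\Omega'$ outside $T_\rho$, replacing it by $\Omega$ there, at the cost of adding only a small amount of perimeter concentrated near $\partial T_\rho$ — and, crucially, one argues this modification does not increase $\cA^h$ (or increases it by an amount controlled by a higher power of $\eps$), using that the modification region is far from $\Sigma$ where $\Omega$ is a strict local minimizer in the sense already available from interior minimization/barrier arguments, or directly by a calibration-type estimate on the annulus. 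This reduces the general case to the case handled in the previous paragraph, completing the proof with the clean conclusion $\cA^h(\Omega') > \cA^h(\Omega)$ (strict, since $\mathcal{F}([\partial\Omega]-[\partial^*\Omega']) > 0$).

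\textbf{Main obstacle.} The delicate step is the second one: cutting off the competitor outside the tubular neighborhood without losing control of $\cA^h$. In the pure-area setting of \cite{inauen-marchese} one exploits monotonicity/density estimates to say the ``exterior'' piece of $\partial^*\Omega'$ can be capped off cheaply; here the extra bulk term $-\int h$ and the fact that $\cA^h$ is not scale-invariant mean one must be more careful, but since $h$ is bounded the bulk contribution of any region of small volume is $O(\eps)$ in $L^1$-measure and can be absorbed. The genuinely quantitative part — showing the gain is quadratic in the flat distance rather than merely positive — requires that the second-variation form controls the relevant quadratic quantity; this is exactly where strict stability (equivalently, the spectral gap $\lambda > 0$ for $\mathcal{L}^h_\Sigma$, valid since $\Sigma$ is nondegenerate and stable) is used, and the interpolation between the flat norm of the boundary difference and the $L^2$ norm of the graphing function over $\Sigma$ must be done with care, as in \cite{inauen-marchese}. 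I expect the remaining estimates (Fermi-coordinate Taylor expansions of area and of $\int h$, coarea slicing) to be routine.
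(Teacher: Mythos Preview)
Your sketch does not actually follow the strategy of \cite{inauen-marchese}, and the second paragraph hides a genuine gap. The Inauen--Marchese argument (and the paper's adaptation of it) is \emph{not} ``prove a quadratic lower bound for graph-like competitors, then cut a general competitor down to a graph.'' It is an indirect argument: assuming a sequence $\Omega'_j$ with $\cA^h(\Omega'_j)\le\cA^h(\Omega)$ and $\mathcal F\to 0$, one introduces a penalized minimizer $R_{\delta,\lambda}$ of $\cA^h + \lambda\,\mathcal F(\,\cdot\,,[\partial\Omega])$, shows that $R_{\delta,\lambda}$ is \emph{almost minimizing for mass} (this is where the paper moves the $\int h$ term to the right-hand side and absorbs it into the flat-norm error), and then invokes the regularity of almost-mass-minimizers to conclude $R_{\delta,\lambda}$ is a smooth graph over $\Sigma$, at which point White's local uniqueness theorem (that $\cA^h$ is an elliptic parametric functional in a tube around $\Sigma$) forces $R_{\delta,\lambda}=[\partial\Omega]$ and yields a contradiction.

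The step you treat as routine---modifying a general $\Omega'$ outside $T_\rho$ so that it agrees with $\Omega$ there, ``at the cost of adding only a small amount of perimeter''---is exactly what this machinery is designed to circumvent. A competitor that is $\mathcal F$-close to $\Omega$ can still have $\partial^*\Omega'$ carrying substantial mass outside $T_\rho$ (think of adding thin necks, small bubbles, or high-multiplicity folds); flat closeness gives $L^1$ closeness of the \emph{sets}, not smallness of perimeter away from $\Sigma$. Your proposed ``calibration-type estimate on the annulus'' or appeal to ``interior minimization/barrier arguments'' does not explain why such exterior mass cannot compensate for the gain near $\Sigma$, nor why the cut competitor inside $T_\rho$ is a single-valued graph over $\Sigma$ so that your first-paragraph second-variation computation even applies. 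The penalization-plus-regularity route of \cite{inauen-marchese} handles both issues simultaneously, and that is what the paper uses.
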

\begin{proof}
    We follow the proof of \cite[Proposition 3.1]{inauen-marchese}, but we take extra care because $\cA^h$ is not necessarily an elliptic parametric functional in the sense of \cite[\S2.2]{inauen-marchese} on all of $M$.

    As observed on \cite[Page 205]{white_minimax}, $\cA^h$ on a small tubular neighborhood of $\Sigma$ is an elliptic parametric functional in the sense of \cite[\S1]{white_minimax} and \cite[\S2.2]{inauen-marchese}, so \cite[Theorem 2]{white_minimax} applies.

    We observe that upper bounds on $\cA^h$ imply upper bounds on area (since $h$ is smooth and $M$ is compact), so the arguments of \cite{inauen-marchese} requiring the compactness theory for bounded area apply. Moreover, since flat convergence implies $L^1$ convergence of the indicator functions of the finite perimeter sets (as noted in \S\ref{subsec:subsets}), $\cA^h$ is lower semi-continuous with respect to $\mathcal{F}$-convergence.

    The arguments of \cite[\S3.2]{inauen-marchese} apply verbatim, with the additional observation that \cite[Lemma 3.5]{inauen-marchese} for $\cA^h$ follows directly from \cite[Lemma 3.5]{inauen-marchese} for area combined with \cite[Lemma 3.4]{inauen-marchese} (to handle the integral of $h$ terms).

    The arguments of \cite[\S3.3]{inauen-marchese} preceding \cite[Lemma 3.8]{inauen-marchese} apply to prove \cite[(3.7)]{inauen-marchese} with $F = \cA^h$. Moving the integral of $h$ terms to the right hand side (where we write $\Lambda$ to be the integral $(n+1)$-current supported in $B_r(x_0)$ with $\partial \Lambda = X$), we get
    \[ \mathbf{M}(R_{\delta, \lambda}) \leq \mathbf{M}(R_{\delta, \lambda}+X) + \lambda\mathcal{F}(X) + c\mathbf{M}(\Lambda). \]
    We can now apply \cite[Lemma 3.4]{inauen-marchese} (since we can assume $\mathcal{F}(X) \leq \eps_0$ for the almost minimizing condition) to deduce
    \[ \mathbf{M}(R_{\delta, \lambda}) \leq \mathbf{M}(R_{\delta, \lambda}+X) + (\lambda + c)\mathcal{F}(X). \]
    We can now apply \cite[Lemma 3.8]{inauen-marchese} to conclude that $R_{\delta, \lambda}$ is almost minimizing for mass. Hence, we can apply \cite[Lemma 3.10]{inauen-marchese} and conclude.
\end{proof}

\subsection{Generic min-max theorem}
The following result upgrades the conclusions of Theorem \ref{thm:basic_min-max} for good pairs.

\begin{theorem}\label{thm:generic_min-max}
    Suppose $(g, h)$ is a good pair and $\partial N$ is a barrier for $(h, P_+, P_-)$. Assume
    \[ \mathbf{L}^h(\Pi) > \sup_{z \in Z} \cA^h(\Phi_0(z)). \]
    
    Then there is $\Omega \in \cCEE(N)$ so that $\cA^h(\Omega) = \mathbf{L}^h(\Pi)$ and $\Sigma = \partial \Omega$ is a smooth, closed, embedded hypersurface contained in $\mathrm{Int}(N)$ satisfying $H_{\Sigma, \Omega} = h\mid_{\Sigma}$ and $1 \leq \mathrm{index}_{\cA^h}(\Sigma) \leq k$.
\end{theorem}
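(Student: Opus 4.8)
The plan is to combine Theorem \ref{thm:basic_min-max} (existence of a smooth closed almost embedded $h$-PMC hypersurface $\Sigma=\partial\Omega\subset\interior(N)$ realizing the width) with the genericity package for good pairs and the minimality-for-strict-stability result. First, since $(g,h)$ is a good pair, $h\in\mathcal S(g)$, so Theorem \ref{thm:basic_min-max} directly applies and produces $\Omega\in\cCEE(N)$ with $\cA^h(\Omega)=\mathbf L^h(\Pi)$ whose boundary $\Sigma$ is a smooth closed almost embedded $h$-PMC hypersurface in $\interior(N)$. By the second defining property of a good pair, every closed two-sided almost embedded $h$-PMC hypersurface in $\interior(N)$ is in fact embedded and $\cA^h$-nondegenerate; applying this to $\Sigma$ upgrades ``almost embedded'' to ``embedded'' and gives that $\Sigma$ is nondegenerate.

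Next I would prove the lower index bound $\mathrm{index}_{\cA^h}(\Sigma)\ge 1$. Suppose for contradiction that $\Sigma$ is $\cA^h$-stable. Since $\Sigma$ is also nondegenerate, the footnote observation applies and $\Sigma$ is strictly $\cA^h$-stable. Then Theorem \ref{thm:minimality} (applied in the closed ambient manifold $\tilde N$, noting $\overline\Omega=\Omega$ up to sets of measure zero and that $\Sigma$ is embedded in $\interior(N)$, hence lies in a small tubular neighborhood inside $\tilde N$) shows $\Omega$ is a \emph{strict} local minimizer for $\cA^h$ in a small $\mathcal F$-neighborhood: $\cA^h(\Omega')>\cA^h(\Omega)$ for all $\Omega'$ with $0<\mathcal F([\partial\Omega]-[\partial^*\Omega'])<\eps$. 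This contradicts $\Sigma$ arising from a nontrivial min-max: one runs the standard deformation/competitor argument (as in \cite{zhou_mult, zhou-zhu_PMC}), showing that if the unique element of the critical set of a pulled-tight minimizing sequence were a strict local minimizer, then the sweepouts could be pushed below the width $\mathbf L^h(\Pi)$ away from $Z$, contradicting the definition of $\mathbf L^h(\Pi)$ together with the hypothesis $\mathbf L^h(\Pi)>\sup_{z\in Z}\cA^h(\Phi_0(z))$. Hence $\Sigma$ is $\cA^h$-unstable, i.e. $\mathrm{index}_{\cA^h}(\Sigma)\ge 1$.

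For the upper bound $\mathrm{index}_{\cA^h}(\Sigma)\le k$, I would invoke the standard min-max index estimate: the critical varifold produced from a $k$-parameter sweepout has Morse index at most $k$. Concretely, since $\Sigma$ is nondegenerate (so its index equals the dimension of the maximal negative subspace of $Q^h_\Sigma$ and is stable under small perturbations), one uses the almost-minimizing-in-small-annuli property together with the deformation theory in \cite[\S1.3]{zhou_mult} (cf. the index bounds of \cite{MN_morse, zhou_mult}) to conclude that if the index were $\ge k+1$ one could construct a $k$-parameter family with strictly smaller maximal $\cA^h$-energy, again contradicting minimality of the width. I expect the \textbf{main obstacle} to be the upper index bound: carrying the Marques--Neves-type index estimate through the localized, obstacle-problem setting requires checking that all the deformation constructions stay within $\cCEE(N)$ and respect the barrier $\partial N$ (so that no competitor escapes or touches the boundary), but since we have already established that the relevant critical varifold and its replacements lie in $\interior(N)$, the arguments of \cite{zhou_mult} go through; this is where the bulk of the technical verification lies, whereas the lower bound is a relatively short deformation argument built on Theorem \ref{thm:minimality}.
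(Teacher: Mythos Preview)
Your proposal is correct and follows essentially the same route as the paper: apply Theorem \ref{thm:basic_min-max}, upgrade to embeddedness via the good pair hypothesis, derive the lower index bound by combining nondegeneracy with Theorem \ref{thm:minimality} to rule out a strict local minimizer at the width, and obtain the upper index bound from the local deformation argument of \cite[Theorem 3.6]{zhou_mult}. The one place where you overestimate the difficulty is the upper bound: since $\Sigma$ is already known to be embedded and to lie in $\interior(N)$, the deformations in \cite[Theorem 3.6]{zhou_mult} are supported in a neighborhood of $\Sigma$ inside $\interior(N)$ and hence automatically preserve membership in $\cCEE(N)$, so no additional obstacle-related verification is needed.
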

\begin{proof}
    The only new conclusions beyond Theorem \ref{thm:basic_min-max} are embeddedness and the index bounds.

    Embeddedness follows immediately from the assumption the $(g, h)$ is a good pair.

    Since $\partial \Omega$ is embedded and contained in $\interior(N)$, the index upper bound of \cite[Theorem 3.6]{zhou_mult} (which is a local deformation argument) applies.

    The index lower bound follows from Theorem \ref{thm:minimality}. Indeed, if $\Sigma$ was $\cA^h$-stable, then $\Sigma$ would be strictly $\cA^h$-stable since $(g, h)$ is a good pair. Then by Theorem \ref{thm:minimality}, $\Omega$ strictly minimizes $\cA^h$ in a flat neighborhood, so any nontrivial min-max $h$-width cannot equal $\cA^h(\Omega)$ (as $X$ is connected and $Z$ is nonempty).
\end{proof}

\subsection{Corollaries of generic min-max} We record two consequence of Theorem \ref{thm:generic_min-max} combined with the compactness theory for $h$-PMC hypersurfaces with uniformly bounded area and index (see \cite[Theorem 2.8]{zhou_mult}).

The first corollary is that the index upper bound applies for metrics $g$ that are not bumpy.

\begin{corollary}
    Suppose $h \in \mathcal{S}$ and $\partial N$ is a barrier for $(h, P_+, P_-)$. Assume
    \[ \mathbf{L}^h(\Pi) > \sup_{z \in Z} \cA^h(\Phi_0(z)). \]
    
    Then there is $\Omega \in \cCEE(N)$ so that $\cA^h(\Omega) = \mathbf{L}^h(\Pi)$ and $\Sigma = \partial \Omega$ is a smooth, closed, almost embedded hypersurface contained in $\mathrm{Int}(N)$ satisfying $H_{\Sigma, \Omega} = h\mid_{\Sigma}$ and $\mathrm{index}_{\cA^h}(\Sigma) \leq k$.
\end{corollary}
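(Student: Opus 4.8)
The plan is to deduce this corollary from Theorem~\ref{thm:generic_min-max} by approximation, using the compactness theory for $h$-PMC hypersurfaces with bounded area and index. First I would invoke Theorem~\ref{thm:generic} to choose a sequence of good pairs $(g_j, h_j)$ converging smoothly to $(g, h)$. The issue is that I must arrange that $\partial N$ remains a barrier for $(h_j, P_+, P_-)$ and that the min-max hypothesis $\mathbf{L}^{h_j}(\Pi) > \sup_{z \in Z}\cA^{h_j}(\Phi_0(z))$ persists. The barrier condition is an open condition on $h\mid_{P_\pm}$ relative to the mean curvatures $H_{P_\pm, E_\pm}$ (which themselves depend continuously on $g$), so it survives small perturbations. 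For the min-max inequality, I would note that $\cA^{h_j} \to \cA^h$ uniformly on the relevant bounded sets (since $h_j \to h$ in $C^0$ and areas are controlled), so both the width $\mathbf{L}^{h_j}(\Pi)$ and the quantity $\sup_{z\in Z}\cA^{h_j}(\Phi_0(z))$ converge to their $h$-counterparts, and the strict inequality is preserved for $j$ large. A minor subtlety: the width should be taken with respect to the fixed homotopy class $\Pi$ of $\Phi_0$, and one should check $\Phi_0$ still maps into $\cCEE(N)$ and is $\mathbf{F}$-continuous, which is automatic since the underlying space of finite perimeter sets does not change with the metric (only the functional does).

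Next, for each $j$ large enough, Theorem~\ref{thm:generic_min-max} produces $\Omega_j \in \cCEE(N)$ with $\Sigma_j = \partial\Omega_j$ a smooth closed embedded $h_j$-PMC hypersurface in $\interior(N)$ with $\cA^{h_j}(\Omega_j) = \mathbf{L}^{h_j}(\Pi)$ and $1 \le \mathrm{index}_{\cA^{h_j}}(\Sigma_j) \le k$. The areas $\cH^n(\Sigma_j)$ are uniformly bounded, because $\mathbf{L}^{h_j}(\Pi) \to \mathbf{L}^h(\Pi) < \infty$ and $\cH^n(\Sigma_j) = \cA^{h_j}(\Omega_j) + \int_{\Omega_j} h_j \le \mathbf{L}^{h_j}(\Pi) + c\,\mathrm{Vol}(\tilde N)$ with $c$ uniform in $j$. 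I would then apply the compactness theorem \cite[Theorem 2.8]{zhou_mult} (in the interior regime, away from $\partial N$ — here is where one uses that the $\Sigma_j$ stay inside $\interior(N)$; combined with the barrier maximum principle for the limit this gives a compact subset of $\interior(N)$ containing all $\Sigma_j$) to extract a subsequential limit $\Sigma = \partial\Omega$, a smooth closed almost embedded $h$-PMC hypersurface in $\interior(N)$ with $\mathrm{index}_{\cA^h}(\Sigma) \le k$ (index is upper semi-continuous under this convergence) and $\cA^h(\Omega) = \lim \cA^{h_j}(\Omega_j) = \mathbf{L}^h(\Pi)$ (using lower semi-continuity of $\cA^h$ under flat/$L^1$ convergence together with the upper bound coming from the width, exactly as in the proof of Theorem~\ref{thm:minimality}).

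Finally I would observe that the limit genuinely lies in $\interior(N)$: the barrier condition for $(h, P_\pm)$ combined with the standard maximum principle rules out $\Sigma$ touching $\partial N$, so no mass of the limit escapes and the varifold convergence is of integral hypersurfaces in the interior. I expect the main obstacle to be the bookkeeping in the compactness step: one must ensure that the barrier is \emph{uniform} along the sequence so that the interior confinement of $\Sigma_j$ is quantitative (a definite distance from $\partial N$ depending only on the $C^0$-distance of $h_j$ to $h$ and the geometry), which is what prevents the limit varifold from concentrating on $\partial N$ and allows the clean application of \cite[Theorem 2.8]{zhou_mult}. Everything else — preservation of the width inequality, area bounds, semi-continuity of index and of $\cA^h$ — is routine given the tools already assembled in the paper.
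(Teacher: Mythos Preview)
Your proposal is correct and takes essentially the same approach as the paper: approximate $(g,h)$ by good pairs via Theorem~\ref{thm:generic}, apply Theorem~\ref{thm:generic_min-max} to each, and pass to the limit using the compactness theory of \cite[Theorem~2.8]{zhou_mult}. The paper records only this one-line outline and does not spell out the persistence of the barrier and width conditions or the interior confinement, so your version is in fact more detailed than the original.
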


The second corollary is that existence (and the index upper bound) applies for $g$ that are not bumpy and $h$ that are not in $\mathcal{S}$.

\begin{corollary}\label{cor:basic_min-max}
    Suppose $\partial N$ is a barrier for $(h, P_+, P_-)$. Assume
    \[ \mathbf{L}^h(\Pi) > \sup_{z \in Z} \cA^h(\Phi_0(z)). \]
    
    Then there is a smooth, closed, two-sided, almost embedded $h$-PMC hypersurface $\Sigma$ contained in $\mathrm{Int}(N)$ satisfying $\cH^n(\Sigma) \leq \mathbf{L}^h(\Pi) + \int_{\{h > 0\}} h$ and $\mathrm{index}_{\cA^h}(\Sigma) \leq k$.
\end{corollary}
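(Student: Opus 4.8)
The plan is to deduce Corollary \ref{cor:basic_min-max} from Theorem \ref{thm:generic_min-max} by a density and compactness argument, removing the genericity hypotheses on both $g$ and $h$ simultaneously. First I would fix an arbitrary pair $(g, h)$ with $\partial N$ a barrier for $(h, P_+, P_-)$ and $\mathbf{L}^h(\Pi) > \sup_{z \in Z} \cA^h(\Phi_0(z))$. Since being a barrier is an open condition (the strict inequalities $H_{P_+, E_+} < h\mid_{P_+}$ and $H_{P_-, E_-} < -h\mid_{P_-}$ persist under small $C^\infty$ perturbations of the metric and prescribing function), and since the $h$-width $\mathbf{L}^h(\Pi)$ and the supremum over $Z$ both vary continuously as $(g,h)$ varies in $C^\infty$ (because $\cA^h$ depends continuously on $(g,h)$ and the class $\Pi$ is fixed combinatorially), I can use Theorem \ref{thm:generic} to choose a sequence of good pairs $(g_i, h_i) \to (g, h)$ in $C^\infty$ such that for each $i$, $\partial N$ is still a barrier for $(h_i, P_+, P_-)$ and the strict width inequality still holds. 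Applying Theorem \ref{thm:generic_min-max} to each $(g_i, h_i)$ yields $\Omega_i \in \cCEE(N)$ with $\Sigma_i = \partial \Omega_i$ a smooth closed embedded $h_i$-PMC hypersurface in $\mathrm{Int}(N)$ with $\cA^{h_i}(\Omega_i) = \mathbf{L}^{h_i}(\Pi)$ and $1 \leq \mathrm{index}_{\cA^{h_i}}(\Sigma_i) \leq k$.

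Next I would extract uniform geometric bounds and pass to a limit. The index is bounded above by $k$ uniformly. For the area bound: from $\cH^n(\Sigma_i) = \cA^{h_i}(\Omega_i) + \int_{\Omega_i} h_i = \mathbf{L}^{h_i}(\Pi) + \int_{\Omega_i} h_i \leq \mathbf{L}^{h_i}(\Pi) + \int_{\{h_i > 0\}} h_i$, and since $\mathbf{L}^{h_i}(\Pi) \to \mathbf{L}^h(\Pi)$ and $\int_{\{h_i > 0\}} h_i \to \int_{\{h > 0\}} h$ (as $h_i \to h$ in $C^\infty$ and hence the positive parts converge in $L^1$), we get $\limsup_i \cH^n(\Sigma_i) \leq \mathbf{L}^h(\Pi) + \int_{\{h > 0\}} h$, giving a uniform area bound. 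With uniform area and index bounds, and the $h_i \to h$ convergence, the compactness theorem \cite[Theorem 2.8]{zhou_mult} applies: after passing to a subsequence, $\Sigma_i$ converges (in the appropriate varifold/geometric sense, with multiplicity) to a smooth closed two-sided almost embedded $h$-PMC hypersurface $\Sigma \subset \mathrm{Int}(N)$ with $\mathrm{index}_{\cA^h}(\Sigma) \leq \liminf_i \mathrm{index}_{\cA^{h_i}}(\Sigma_i) \leq k$ and, by lower semicontinuity of mass under varifold convergence, $\cH^n(\Sigma) \leq \limsup_i \cH^n(\Sigma_i) \leq \mathbf{L}^h(\Pi) + \int_{\{h > 0\}} h$.

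The main subtlety I anticipate is ensuring the limit $\Sigma$ is nontrivial, i.e.\ that it does not degenerate to the empty varifold or escape into $\partial N$. Nontriviality of each $\Sigma_i$ is guaranteed by the index lower bound $\mathrm{index}_{\cA^{h_i}}(\Sigma_i) \geq 1$ (in particular $\Sigma_i \neq \varnothing$), but index can drop in the limit, so I cannot directly conclude $\Sigma \neq \varnothing$ from this alone. However, the quantitative area \emph{lower} bound coming from the strict width inequality should rescue this: since $\mathbf{L}^{h_i}(\Pi) > \sup_{z \in Z} \cA^{h_i}(\Phi_0(z))$ with a gap that is uniform along the subsequence (because both sides converge and the limiting inequality is strict), the monotonicity formula applied at points of $\mathrm{spt}\|\Sigma_i\|$ together with the lower bound on $\mathbf{L}^{h_i}(\Pi) - \cA^{h_i}(\varnothing)$ (or the argument in the proof of Theorem \ref{thm:generic_min-max} that $\cA^{h_i}(\Omega_i)$ cannot equal the value at a strict minimizer) forces a uniform lower bound on $\cH^n(\Sigma_i)$ bounded away from zero, which survives in the limit. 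As for the boundary, each $\Sigma_i \subset \mathrm{Int}(N)$ and $\partial N$ is a barrier for $(h_i, P_+, P_-)$ uniformly; the barrier condition is stable and the limiting $\Sigma$ satisfies $H_{\Sigma} = h\mid_\Sigma$, so if $\Sigma$ touched $\partial N$ it would violate the strict maximum principle comparison with $\partial N$ (exactly as in the proof of the interior theorem in \S4). Thus $\Sigma \subset \mathrm{Int}(N)$, completing the proof.
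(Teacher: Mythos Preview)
Your proposal is correct and follows essentially the same approach as the paper: approximate $(g,h)$ by good pairs via Theorem \ref{thm:generic}, apply Theorem \ref{thm:generic_min-max} to each approximation, and pass to the limit using \cite[Theorem 2.8]{zhou_mult}. The paper records Corollary \ref{cor:basic_min-max} as a direct consequence of exactly this combination without spelling out the details, and you have filled them in accurately, including the uniform area bound via $\cH^n(\Sigma_i) = \mathbf{L}^{h_i}(\Pi) + \int_{\Omega_i} h_i \leq \mathbf{L}^{h_i}(\Pi) + \int_{\{h_i>0\}} h_i$ and the nontriviality of the limit via the monotonicity formula.

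One small remark on your interior argument: since the limit $\Sigma$ need not arise as the boundary of a set in $\cCEE(N)$, the orientation needed for the one-sided maximum principle is not immediately inherited in the limit. A cleaner way to secure $\Sigma \subset \interior(N)$ is to note that the barrier inequalities are strict and open, so one can shrink $N$ slightly to $N'$ with $\partial N'$ still a barrier for all $(h_i,P_+',P_-')$ with $i$ large; then each $\Sigma_i \subset \interior(N')$, and the limit lies in $\overline{N'} \subset \interior(N)$. This sidesteps the orientation issue entirely.
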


\section{Localization in compact manifolds with barrier}\label{sec:localization}

Using our generic min-max result from Theorem \ref{thm:generic_min-max}, we develop a cutting argument to localize min-max hypersurfaces.

\subsection{Setup}\label{subsec:local_setup}
We make a few specialized definitions related to the min-max setup considered above. See Figure \ref{fig:span} for an illustration.

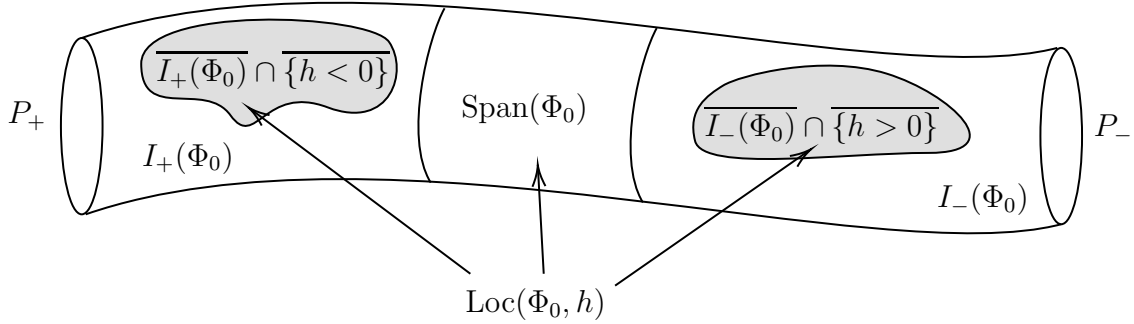
\begin{figure}
\begin{tikzpicture}[x=0.75pt,y=0.75pt,yscale=-1,xscale=1]
%uncomment if require: \path (0,195); %set diagram left start at 0, and has height of 195

%Curve Lines [id:da9578712875271647] 
\draw    (105,31.5) .. controls (256,-20.5) and (505,60.5) .. (597,36.5) ;
%Curve Lines [id:da43684663407994084] 
\draw    (107,120.5) .. controls (258,68.5) and (507,149.5) .. (599,125.5) ;
%Shape: Ellipse [id:dp48983988970443615] 
\draw   (94.5,76) .. controls (94.5,51.42) and (99.2,31.5) .. (105,31.5) .. controls (110.8,31.5) and (115.5,51.42) .. (115.5,76) .. controls (115.5,100.58) and (110.8,120.5) .. (105,120.5) .. controls (99.2,120.5) and (94.5,100.58) .. (94.5,76) -- cycle ;
%Shape: Ellipse [id:dp8445921923341668] 
\draw   (588.5,81) .. controls (588.5,56.42) and (593.2,36.5) .. (599,36.5) .. controls (604.8,36.5) and (609.5,56.42) .. (609.5,81) .. controls (609.5,105.58) and (604.8,125.5) .. (599,125.5) .. controls (593.2,125.5) and (588.5,105.58) .. (588.5,81) -- cycle ;
%Curve Lines [id:da9089455583520646] 
\draw    (288,16.5) .. controls (280,30.5) and (267,81.5) .. (281,104.5) ;
%Curve Lines [id:da34457910293235605] 
\draw    (395,26.5) .. controls (387,40.5) and (374,91.5) .. (388,114.5) ;
%Shape: Polygon Curved [id:ds26311373568095686] 
\draw  [fill=gray!25!white ] (135,44.5) .. controls (135,14.5) and (250,19) .. (260,33.5) .. controls (270,48) and (261,78.5) .. (229,66.5) .. controls (197,54.5) and (193,88.5) .. (180,70.5) .. controls (167,52.5) and (135,74.5) .. (135,44.5) -- cycle ;
%Shape: Polygon Curved [id:ds5348211014641485] 
\draw  [fill=gray!25!white ] (428.47,55.5) .. controls (436.54,50.5) and (478.71,39.5) .. (511,49.5) .. controls (543.29,59.5) and (575,89.5) .. (532,89.5) .. controls (489,89.5) and (418.37,99.5) .. (414.34,83.5) .. controls (410.3,67.5) and (420.39,60.5) .. (428.47,55.5) -- cycle ;
%Straight Lines [id:da9139443602840389] 
\draw    (301,150.5) -- (190.6,67.7) ;
\draw [shift={(189,66.5)}, rotate = 36.87] [color={rgb, 255:red, 0; green, 0; blue, 0 }  ][line width=0.75]    (10.93,-3.29) .. controls (6.95,-1.4) and (3.31,-0.3) .. (0,0) .. controls (3.31,0.3) and (6.95,1.4) .. (10.93,3.29)   ;
%Straight Lines [id:da9612682406277074] 
\draw    (338,150.5) -- (335.11,97.5) ;
\draw [shift={(335,95.5)}, rotate = 86.88] [color={rgb, 255:red, 0; green, 0; blue, 0 }  ][line width=0.75]    (10.93,-3.29) .. controls (6.95,-1.4) and (3.31,-0.3) .. (0,0) .. controls (3.31,0.3) and (6.95,1.4) .. (10.93,3.29)   ;
%Straight Lines [id:da3143141787295711] 
\draw    (374,156.5) -- (471.36,88.64) ;
\draw [shift={(473,87.5)}, rotate = 145.12] [color={rgb, 255:red, 0; green, 0; blue, 0 }  ][line width=0.75]    (10.93,-3.29) .. controls (6.95,-1.4) and (3.31,-0.3) .. (0,0) .. controls (3.31,0.3) and (6.95,1.4) .. (10.93,3.29)   ;

% Text Node
\draw (66,62) node [anchor=north west][inner sep=0.75pt]   [align=left] {$\displaystyle P_{+}$};
% Text Node
\draw (614,68.4) node [anchor=north west][inner sep=0.75pt]    {$P_{-}$};
% Text Node
\draw (295,58) node [anchor=north west][inner sep=0.75pt]   [align=left] {$\displaystyle \mathrm{Span}( \Phi _{0})$};
% Text Node
\draw (134,82.4) node [anchor=north west][inner sep=0.75pt]    {$I_{+}( \Phi _{0})$};
% Text Node
\draw (535,103.4) node [anchor=north west][inner sep=0.75pt]    {$I_{-}( \Phi _{0})$};
% Text Node
\draw (141,36.4) node [anchor=north west][inner sep=0.75pt]    {$\overline{I_{+}( \Phi _{0})} \cap \overline{\{h< 0\}}$};
% Text Node
\draw (417.65,64.4) node [anchor=north west][inner sep=0.75pt]    {$\overline{I_{-}( \Phi _{0})} \cap \overline{\{h >0\}}$};
% Text Node
\draw (297,156.4) node [anchor=north west][inner sep=0.75pt]    {$\mathrm{Loc}( \Phi _{0} ,h)$};

\end{tikzpicture}
\caption{Span, unspanned halves, and localizer.}
\label{fig:span}
\end{figure}

\begin{definition}[Span]
    The \emph{span} of $\Phi_0$ is 
    \[ \Span(\Phi_0) = \overline{\bigcup_{x \in X} \partial^*\Phi_0(x)}. \]
\end{definition}

\begin{definition}[Unspanned halves]
    For any $x_0 \in X$\footnote{Recall that we assume (without loss of generality) that $\partial \Phi_0(x) = \overline{\partial^*\Phi_0(x)}$ for all $x \in X$.}, we define the positive and negative \emph{unspanned halves} by
    \[ I_+(\Phi_0) = (N \cap \interior(\Phi_0(x_0))) \setminus \Span(\Phi_0)\ \ \text{and}\ \ I_-(\Phi_0) = (N \setminus \overline{\Phi_0(x_0)}) \setminus \Span(\Phi_0). \]
    Note that $I_{\pm}(\Phi_0)$ are well-defined: for any $x \in X$ we have
    \[ I_+(\Phi_0) \subset \interior(\Phi_0(x))\ \ \text{and}\ \ I_-(\Phi_0) \cap \overline{\Phi_0(x)} = \varnothing. \]
\end{definition}

\begin{definition}[Localizer]
    The \emph{localizer} of $\Phi_0$ and $h$ is
    \[ \Loc(\Phi_0, h) = \Span(\Phi_0) \cup (\overline{I_+(\Phi_0)} \cap \overline{\{h < 0\}}) \cup (\overline{I_-(\Phi_0)} \cap \overline{\{h > 0\}}). \]
\end{definition}

\subsection{Subdomains}\label{subsec:subdomains}
Here we discuss how to restrict the min-max setup to special subdomains of the original compact manifold with boundary.

Let $\Omega \in \cCEE(N)$ be an open set with smooth boundary in $\interior(N)$ satisfying $\partial \Omega \cap \Span(\Phi_0) = \varnothing$.

\begin{figure}

\begin{tikzpicture}[x=0.75pt,y=0.75pt,yscale=-1,xscale=1]
%uncomment if require: \path (0,175); %set diagram left start at 0, and has height of 175

%Shape: Rectangle [id:dp7481843318030129] 
\fill  [fill=gray!25!white ] (69,54) -- (461.86,54) -- (461.86,116.5) -- (69,116.5) -- cycle ;
%Rounded Rect [id:dp7322058879703737] 
\fill  [fill=white ] (229.21,91.5) .. controls (229.21,82.66) and (236.37,75.5) .. (245.21,75.5) -- (248.15,75.5) .. controls (256.99,75.5) and (264.15,82.66) .. (264.15,91.5) -- (264.15,91.5) .. controls (264.15,100.34) and (256.99,107.5) .. (248.15,107.5) -- (245.21,107.5) .. controls (236.37,107.5) and (229.21,100.34) .. (229.21,91.5) -- cycle ;
%Shape: Rectangle [id:dp5418532147539742] 
\draw   (152.51,54) -- (552.19,54) -- (552.19,116.5) -- (152.51,116.5) -- cycle ;
%Shape: Rectangle [id:dp024824120297829433] 
\draw   (315.28,54) -- (389.42,54) -- (389.42,116.5) -- (315.28,116.5) -- cycle ;
%Straight Lines [id:da37723402919006854] 
\draw    (71.56,36.5) -- (459.3,36.5) ;
%Straight Lines [id:da2685149224329916] 
\draw    (71.56,36.5) -- (71.73,46.4) ;
%Straight Lines [id:da6358967748556634] 
\draw    (459.3,36.5) -- (459.3,46.8) ;
%Straight Lines [id:da01211302153109095] 
\draw    (204.5,145.5) -- (239.93,99.09) ;
\draw [shift={(241.14,97.5)}, rotate = 127.36] [color={rgb, 255:red, 0; green, 0; blue, 0 }  ][line width=0.75]    (10.93,-3.29) .. controls (6.95,-1.4) and (3.31,-0.3) .. (0,0) .. controls (3.31,0.3) and (6.95,1.4) .. (10.93,3.29)   ;
%Straight Lines [id:da2383750560976886] 
\draw    (180.64,146.5) -- (120,98.74) ;
\draw [shift={(118.43,97.5)}, rotate = 38.23] [color={rgb, 255:red, 0; green, 0; blue, 0 }  ][line width=0.75]    (10.93,-3.29) .. controls (6.95,-1.4) and (3.31,-0.3) .. (0,0) .. controls (3.31,0.3) and (6.95,1.4) .. (10.93,3.29)   ;
%Straight Lines [id:da2956382226095102] 
\draw    (522.36,144) -- (499.48,103.24) ;
\draw [shift={(498.5,101.5)}, rotate = 60.69] [color={rgb, 255:red, 0; green, 0; blue, 0 }  ][line width=0.75]    (10.93,-3.29) .. controls (6.95,-1.4) and (3.31,-0.3) .. (0,0) .. controls (3.31,0.3) and (6.95,1.4) .. (10.93,3.29)   ;
%Straight Lines [id:da4432218655403155] 
\draw    (547.08,145) -- (595.87,100.84) ;
\draw [shift={(597.36,99.5)}, rotate = 137.86] [color={rgb, 255:red, 0; green, 0; blue, 0 }  ][line width=0.75]    (10.93,-3.29) .. controls (6.95,-1.4) and (3.31,-0.3) .. (0,0) .. controls (3.31,0.3) and (6.95,1.4) .. (10.93,3.29)   ;
%Straight Lines [id:da7999960049846028] 
\draw  [dash pattern={on 4.5pt off 4.5pt}]  (70.7,54.5) -- (152.51,54) ;
%Straight Lines [id:da347510685355525] 
\draw  [dash pattern={on 4.5pt off 4.5pt}]  (70.7,117) -- (152.51,116.5) ;
%Straight Lines [id:da687147836392818] 
\draw  [dash pattern={on 4.5pt off 4.5pt}]  (552.19,54) -- (634,53.5) ;
%Straight Lines [id:da10686720800130478] 
\draw  [dash pattern={on 4.5pt off 4.5pt}]  (552.19,116.5) -- (634,116) ;

% Text Node
\draw (317.15,77.4) node [anchor=north west][inner sep=0.75pt]    {$\mathrm{Span}( \Phi _{0})$};
% Text Node
\draw (249.95,14.4) node [anchor=north west][inner sep=0.75pt]    {$\Omega $};
% Text Node
\draw (131.29,74.4) node [anchor=north west][inner sep=0.75pt]    {$P_{+}$};
% Text Node
\draw (553.19,78.4) node [anchor=north west][inner sep=0.75pt]    {$P_{-}$};
% Text Node
\draw (184.98,145.4) node [anchor=north west][inner sep=0.75pt]    {$E_{+}^{'}$};
% Text Node
\draw (525.07,145.4) node [anchor=north west][inner sep=0.75pt]    {$E_{-}^{'}$};

\end{tikzpicture}

\caption{Schematic for the construction of $N'$ when $\mathrm{Span}(\Phi_0) \subset \Omega$.}
\label{fig:sub1}
\end{figure}
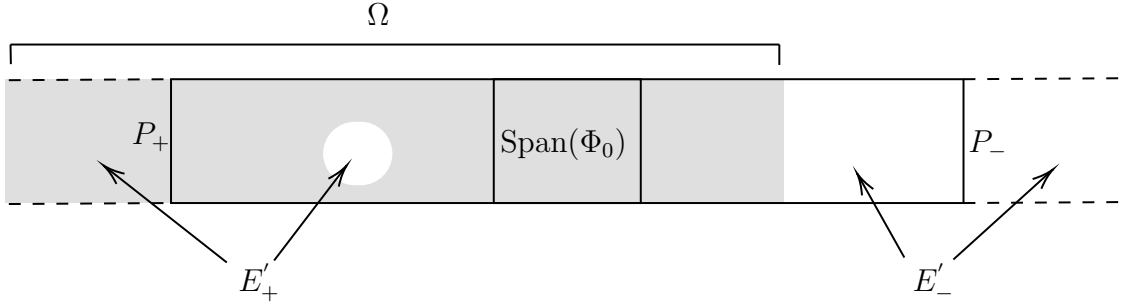

We consider two possibilities.
\begin{itemize}
    \item $\Span(\Phi_0) \subset \Omega$. Consider the subdomain $N' = N \cap \overline{\Omega}$. Then $\partial N' = P_+ \sqcup \partial \Omega$. We define $E_+'$ to be the union of $E_+$ and all components of $\tilde{N} \setminus N'$ in $I_+$. $E_-'$ is defined to be the union of the remaining components of $\tilde{N} \setminus N'$. See Figure \ref{fig:sub1}.
    \item $\Span(\Phi_0) \cap \overline{\Omega} = \varnothing$. Consider the subdomain $N' = N \setminus \Omega$. Then $\partial N' = P_- \sqcup \partial \Omega$. We define $E_-'$ to be the union of $E_-$ and all components of $\tilde{N}\setminus N'$ in $I_-$. $E_+'$ is defined to be the union of the remaining components of $\tilde{N} \setminus N'$. See Figure \ref{fig:sub2}.
\end{itemize}
We record a few observations that apply in both cases.
\begin{itemize}
    \item $\mathcal{C}_{E_+', E_-'}(N') \subset \cCEE(N)$. Hence, $\cA^h$ (as defined on $\cCEE(N)$) is well-defined on $\mathcal{C}_{E_+', E_-'}(N')$.
    \item By construction, the image of $\Phi_0$ lies in $\mathcal{C}_{E_+', E_-'}(N')$. Let $\Pi(N')$ denote the $(X, Z)$-homotopy class of $\Phi_0$ for maps to $\mathcal{C}_{E_+', E_-'}(N')$, which satisfies $\Pi \supset \Pi(N')$ by definition.
    \item Let $\mathbf{L}^h(\Pi(N'))$ denote the $h$-width of $\Pi(N')$. Then $\mathbf{L}^h(\Pi(N')) \geq \mathbf{L}^h(N)$.
\end{itemize}

\begin{figure}      

\begin{tikzpicture}[x=0.75pt,y=0.75pt,yscale=-1,xscale=1]
%uncomment if require: \path (0,175); %set diagram left start at 0, and has height of 175

%Shape: Rectangle [id:dp10490157754288043] 
\fill  [fill=gray!25!white ] (70.7,54.5) -- (267,54.5) -- (267,117) -- (70.7,117) -- cycle ;
%Rounded Rect [id:dp5968796317686043] 
\fill  [fill=gray!25!white ] (428.21,90.5) .. controls (428.21,81.66) and (435.37,74.5) .. (444.21,74.5) -- (447.15,74.5) .. controls (455.99,74.5) and (463.15,81.66) .. (463.15,90.5) -- (463.15,90.5) .. controls (463.15,99.34) and (455.99,106.5) .. (447.15,106.5) -- (444.21,106.5) .. controls (435.37,106.5) and (428.21,99.34) .. (428.21,90.5) -- cycle ;
%Shape: Rectangle [id:dp4830976944711135] 
\draw   (152.51,54) -- (552.19,54) -- (552.19,116.5) -- (152.51,116.5) -- cycle ;
%Shape: Rectangle [id:dp29169120437851437] 
\draw   (315.28,54) -- (389.42,54) -- (389.42,116.5) -- (315.28,116.5) -- cycle ;
%Straight Lines [id:da25252482828856004] 
\draw    (70.56,36.5) -- (265,36.5) ;
%Straight Lines [id:da3713030511826869] 
\draw    (70.56,36.5) -- (70.64,46.4) ;
%Straight Lines [id:da06715801235280683] 
\draw    (265,36.5) -- (265,46.8) ;
%Straight Lines [id:da2561198718384894] 
\draw    (199,141.5) -- (211.41,101.41) ;
\draw [shift={(212,99.5)}, rotate = 107.2] [color={rgb, 255:red, 0; green, 0; blue, 0 }  ][line width=0.75]    (10.93,-3.29) .. controls (6.95,-1.4) and (3.31,-0.3) .. (0,0) .. controls (3.31,0.3) and (6.95,1.4) .. (10.93,3.29)   ;
%Straight Lines [id:da36568329459517124] 
\draw    (180.64,146.5) -- (120,98.74) ;
\draw [shift={(118.43,97.5)}, rotate = 38.23] [color={rgb, 255:red, 0; green, 0; blue, 0 }  ][line width=0.75]    (10.93,-3.29) .. controls (6.95,-1.4) and (3.31,-0.3) .. (0,0) .. controls (3.31,0.3) and (6.95,1.4) .. (10.93,3.29)   ;
%Straight Lines [id:da007816645359946084] 
\draw    (522.36,144) -- (445.72,98.52) ;
\draw [shift={(444,97.5)}, rotate = 30.68] [color={rgb, 255:red, 0; green, 0; blue, 0 }  ][line width=0.75]    (10.93,-3.29) .. controls (6.95,-1.4) and (3.31,-0.3) .. (0,0) .. controls (3.31,0.3) and (6.95,1.4) .. (10.93,3.29)   ;
%Straight Lines [id:da1514686942531619] 
\draw    (547.08,145) -- (595.87,100.84) ;
\draw [shift={(597.36,99.5)}, rotate = 137.86] [color={rgb, 255:red, 0; green, 0; blue, 0 }  ][line width=0.75]    (10.93,-3.29) .. controls (6.95,-1.4) and (3.31,-0.3) .. (0,0) .. controls (3.31,0.3) and (6.95,1.4) .. (10.93,3.29)   ;
%Straight Lines [id:da6527804328221928] 
\draw  [dash pattern={on 4.5pt off 4.5pt}]  (70.7,54.5) -- (152.51,54) ;
%Straight Lines [id:da0018854683567378627] 
\draw  [dash pattern={on 4.5pt off 4.5pt}]  (70.7,117) -- (152.51,116.5) ;
%Straight Lines [id:da8985043904109786] 
\draw  [dash pattern={on 4.5pt off 4.5pt}]  (552.19,54) -- (634,53.5) ;
%Straight Lines [id:da34808331566407547] 
\draw  [dash pattern={on 4.5pt off 4.5pt}]  (552.19,116.5) -- (634,116) ;
%Straight Lines [id:da471856327686929] 
\draw    (284,21.5) -- (168.99,30.35) ;
\draw [shift={(167,30.5)}, rotate = 355.6] [color={rgb, 255:red, 0; green, 0; blue, 0 }  ][line width=0.75]    (10.93,-3.29) .. controls (6.95,-1.4) and (3.31,-0.3) .. (0,0) .. controls (3.31,0.3) and (6.95,1.4) .. (10.93,3.29)   ;
%Straight Lines [id:da3854212313146015] 
\draw    (314,21) -- (438.23,85.58) ;
\draw [shift={(440,86.5)}, rotate = 207.47] [color={rgb, 255:red, 0; green, 0; blue, 0 }  ][line width=0.75]    (10.93,-3.29) .. controls (6.95,-1.4) and (3.31,-0.3) .. (0,0) .. controls (3.31,0.3) and (6.95,1.4) .. (10.93,3.29)   ;

% Text Node
\draw (317.15,77.4) node [anchor=north west][inner sep=0.75pt]    {$\mathrm{Span}( \Phi _{0})$};
% Text Node
\draw (291.03,11.4) node [anchor=north west][inner sep=0.75pt]    {$\Omega $};
% Text Node
\draw (131.29,74.4) node [anchor=north west][inner sep=0.75pt]    {$P_{+}$};
% Text Node
\draw (553.19,78.4) node [anchor=north west][inner sep=0.75pt]    {$P_{-}$};
% Text Node
\draw (184.98,145.4) node [anchor=north west][inner sep=0.75pt]    {$E_{+}^{'}$};
% Text Node
\draw (525.07,145.4) node [anchor=north west][inner sep=0.75pt]    {$E_{-}^{'}$};

\end{tikzpicture}
\caption{Schematic for the construction of $N'$ when $\mathrm{Span}(\Phi_0) \cap \overline{\Omega} = \varnothing$.}
\label{fig:sub2}
\end{figure}

\subsection{Neighborhood cutting}\label{subsec:neighborhood_cutting}

We use the formula for the derivative of mean curvature along a smooth variation to find good hypersurfaces near unstable and strictly stable $h$-PMC hypersurfaces along which we will cut our manifold. 

\begin{lemma}\label{lem:neighborhood_cutting1}
    Let $(N, \partial N, g)$ be a compact Riemannian manifold with smooth boundary and $h \in C^{\infty}(N)$. Suppose $\Omega \in \cCEE(N)$ has smooth closed embedded boundary contained in $\interior(N)$. Suppose $\Sigma$ is a connected component of $\partial \Omega$ satisfying $H_{\Sigma, \Omega} = h\mid_{\Sigma}$ and $\Sigma$ is $\cA^h$-unstable. Then for any $\eps > 0$, there are sets $\Omega_- \subset \Omega$ and $\Omega_+ \supset \Omega$ satisfying
    \begin{itemize}
        \item $\Sigma \cap \Omega_- = \varnothing$ and $\Sigma \subset \Omega_+$,
        \item $\Omega \triangle \Omega_{\pm} \subset B_{\eps}(\Sigma)$,
        \item $\Sigma_{\pm} = (\partial \Omega_{\pm}) \cap B_{\eps}(\Sigma)$ are smooth closed embedded hypersurfaces contained in $\interior(N)$,
        \item $H_{\Sigma_-, \Omega_-} > h\mid_{\Sigma_-}$,
        \item $H_{\Sigma_+,\Omega_+} < h\mid_{\Sigma_+}$.
    \end{itemize}
\end{lemma}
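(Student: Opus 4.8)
The plan is to exploit the first-variation formula for mean curvature along a normal flow. Since $\Sigma$ is $\cA^h$-unstable, the second variation quadratic form $Q_\Sigma^h$ has a negative direction; by standard elliptic theory on the closed hypersurface $\Sigma$, the lowest eigenvalue $\lambda_1$ of the Jacobi operator $\mathcal{L}_\Sigma^h = -\Delta_\Sigma - (\mathrm{Ric}_M(\nu,\nu) + |A_\Sigma|^2 + \partial_\nu h)$ is strictly negative, with a first eigenfunction $\varphi_1 > 0$. The key computation is that if we flow $\Sigma$ in the normal direction with speed $u\nu$ for a function $u$ on $\Sigma$ (extended appropriately), the mean curvature minus $h$ changes, to first order, by exactly $-\mathcal{L}_\Sigma^h u = -(\mathcal{L}_\Sigma^h)u$. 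Concretely, letting $\Sigma_t$ be the normal graph of $t\varphi_1$ over $\Sigma$ (using the exponential map), one has $\frac{d}{dt}\big|_{t=0}(H_{\Sigma_t} - h\mid_{\Sigma_t}) = -\mathcal{L}_\Sigma^h\varphi_1 = -\lambda_1\varphi_1$. Since $\lambda_1 < 0$ and $\varphi_1 > 0$, this derivative is a \emph{positive} function on $\Sigma$.

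First I would fix the first eigenfunction $\varphi_1 > 0$ of $\mathcal{L}_\Sigma^h$ on $\Sigma$ (with $\|\varphi_1\|_{C^2} = 1$ say), normalize so $\varphi_1 \le 1$, and use the normal exponential map along $\Sigma$ to define, for small $t$, the hypersurface $\Sigma_t = \{\exp_x(t\varphi_1(x)\nu(x)) : x \in \Sigma\}$, where $\nu$ is the unit normal pointing out of $\Omega$. For $t$ small this is a smooth closed embedded hypersurface contained in $\interior(N)$ and contained in $B_\eps(\Sigma)$ (choosing $t$ small depending on $\eps$ and on the geometry). Each $\Sigma_t$ with $t \ne 0$ bounds, together with $\partial\Omega \setminus \Sigma$, a region: for $t > 0$ we get $\Omega_+ \supset \Omega$ with $\Sigma \subset \Omega_+$ and $\Omega \triangle \Omega_+ \subset B_\eps(\Sigma)$, and for $t < 0$ (i.e.\ flowing inward) we get $\Omega_- \subset \Omega$ with $\Sigma \cap \Omega_- = \varnothing$. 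The orientation bookkeeping is that $\Sigma_\pm := (\partial\Omega_\pm)\cap B_\eps(\Sigma)$ inherits the outward normal of $\Omega_\pm$, which for $t>0$ points in the same sense as $\nu$, and for $t<0$ also points in the sense of $\nu$ (since $\Omega_-$ lies on the $-\nu$ side near $\Sigma$).

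Next I would verify the mean curvature inequalities by Taylor expansion. Write $f(t)(x) = H_{\Sigma_t, \Omega_+}(\exp_x(t\varphi_1\nu)) - h(\exp_x(t\varphi_1\nu))$, a smooth function of $(t,x)$ with $f(0) \equiv 0$ (since $H_{\Sigma,\Omega} = h\mid_\Sigma$) and $\partial_t f(0)(x) = -\lambda_1\varphi_1(x) > 0$ uniformly in $x$ by compactness of $\Sigma$. Hence there is $t_0 > 0$ so that $f(t) > 0$ for $0 < t \le t_0$ and $f(t) < 0$ for $-t_0 \le t < 0$; after relabeling $\Sigma_{\pm}$ by the outward normal of $\Omega_\pm$, the sign flips appropriately for $\Sigma_-$ (the outward normal of $\Omega_-$ near $\Sigma$ is $\nu$ evaluated along $\Sigma_-$, and a direct check shows $H_{\Sigma_-,\Omega_-} > h\mid_{\Sigma_-}$ for $t = -t_0$ small, while $H_{\Sigma_+,\Omega_+} < h\mid_{\Sigma_+}$ for $t = t_0$). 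Finally, choose $t_0$ also small enough (in terms of $\eps$ and the second fundamental form of $\Sigma$ together with the injectivity radius of the normal exponential map) that $\Sigma_{\pm} \subset B_\eps(\Sigma)$, that $\Sigma_\pm$ is embedded and contained in $\interior(N)$, and that $\Omega \triangle \Omega_\pm \subset B_\eps(\Sigma)$; then set $\Omega_\pm$ to the corresponding region. Note that membership in $\cCEE(N)$ is preserved since we only modify $\Omega$ inside $B_\eps(\Sigma) \subset \interior(N)$, away from $P_\pm$, provided $\eps$ is small.

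The main obstacle I anticipate is purely bookkeeping rather than analytic: getting the signs right after reorienting $\Sigma_\pm$ with respect to the outward normal of $\Omega_\pm$ (as opposed to $\nu$, the outward normal of $\Omega$), since the statement is phrased in terms of $H_{\Sigma_\pm, \Omega_\pm}$. One must track that pushing $\Sigma$ \emph{inward} (toward the $-\nu$ side) produces a component of $\partial\Omega_-$ whose $\Omega_-$-outward normal still agrees with $\nu\mid_\Sigma$ transported along the flow, so that the perturbed mean curvature, which decreased by $\lambda_1 t \varphi_1$ (a positive quantity for $t < 0$ since $\lambda_1 < 0$), indeed satisfies $H_{\Sigma_-,\Omega_-} > h\mid_{\Sigma_-}$; symmetrically for $\Sigma_+$. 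Everything else (smoothness, embeddedness, the containment in $B_\eps(\Sigma)$, staying in $\interior(N)$ and in $\cCEE(N)$) follows from taking $t_0$ sufficiently small and is routine.
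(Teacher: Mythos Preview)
Your approach is exactly the paper's: use the positive first eigenfunction $\varphi_1$ of $\mathcal{L}_\Sigma^h$ (with $\lambda_1<0$) to generate a normal foliation $\{\Sigma_t\}$, then read off the sign of $H_{\Sigma_t}-h\mid_{\Sigma_t}$ from the first-order expansion.

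There is one genuine sign error, however. In the paper's convention ($H_{\partial B_1,\nu_{\mathrm{out}}}=n$), the variation formula gives $\frac{d}{dt}\big|_{t=0}H_{\Sigma_t} = -\Delta_\Sigma\varphi_1 - (|A_\Sigma|^2+\mathrm{Ric}(\nu,\nu))\varphi_1$ (check on a round sphere: flowing outward decreases $H=n/r$). Hence
\[
\frac{d}{dt}\Big|_{t=0}\big(H_{\Sigma_t}-h\mid_{\Sigma_t}\big)=\mathcal{L}_\Sigma^h\varphi_1=\lambda_1\varphi_1<0,
\]
not $-\lambda_1\varphi_1>0$ as you wrote. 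Consequently $f(t)<0$ for small $t>0$ and $f(t)>0$ for small $t<0$, which is the opposite of what you stated. Since you correctly observe that the outward normal of $\Omega_\pm$ along $\Sigma_\pm$ agrees with the transported $\nu$ (no sign flip), your intermediate claims about $f$ are inconsistent with the final (correct) inequalities $H_{\Sigma_+,\Omega_+}<h$ and $H_{\Sigma_-,\Omega_-}>h$. Fixing the sign in the derivative formula makes the argument internally consistent and identical to the paper's proof.
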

\begin{proof}
    Let $\nu$ denote the unit normal vector field along $\Sigma$ pointing out of $\Omega$. Since $\Sigma = \partial \Omega$ is $\cA^h$-unstable, there is a negative number $\lambda < 0$ and a smooth positive function $\varphi \in C^{\infty}(\Sigma)$ so that
    \begin{equation}\label{eqn:jacobi}
        -\Delta_{\Sigma} \varphi - (|A_{\Sigma}|^2 + \Ric(\nu,\nu))\varphi = (\partial_\nu h)\varphi + \lambda \varphi < (\partial_\nu h)\varphi.
    \end{equation}
    Consider a variation of $\Sigma$ given by $\varphi\nu$ (namely, extend $\varphi\nu$ to a vector field supported in a neighborhood of $\Sigma$, then apply the flow diffeomorphisms generated by this vector field to $\Sigma$ and $\Omega$). Since $\varphi$ is positive and $\Sigma$ is closed and embedded, we obtain a foliation $\{\Sigma_t = \partial \Omega_t\}_{t \in (-\delta, \delta)}$ of a neighborhood of $\Sigma$ for some small $\delta > 0$, where $\Sigma_0 = \Sigma$ and $\Omega_0 = \Omega$. 
    
    Since the derivative of the mean curvature along this foliation at $t=0$ is given by the left hand side of
    \eqref{eqn:jacobi} and the derivative of $h$ along this foliation at $t=0$ is given by the right hand side of \eqref{eqn:jacobi}, we find that for all $\eta > 0$ sufficiently small
    \[ H_{\Sigma_{-\eta}, \Omega_{-\eta}} > h\mid_{\Sigma_{-\eta}}\ ,\ \ H_{\Sigma_{\eta}, \Omega_\eta} < h\mid_{\Sigma_\eta}. \]
    We can now take $\Omega_{\pm} = \Omega_{\pm\eta}$ for $\eta > 0$ chosen sufficiently small depending on $\eps$.
\end{proof}

\begin{lemma}\label{lem:neighborhood_cutting2}
    Let $(N, \partial N, g)$ be a compact Riemannian manifold with smooth boundary and $h \in C^{\infty}(N)$. Suppose $\Omega \in \cCEE(N)$ has smooth closed embedded boundary contained in $\interior(N)$. Suppose $\Sigma$ is a connected component of $\partial \Omega$ satisfying $H_{\Sigma, \Omega} = h\mid_{\Sigma}$ and $\Sigma$ is strictly $\cA^h$-stable. Then for any $\eps > 0$, there are sets $\Omega_- \subset \Omega$ and $\Omega_+ \supset \Omega$ satisfying
    \begin{itemize}
        \item $\Sigma \cap \Omega_- = \varnothing$ and $\Sigma \subset \Omega_+$,
        \item $\Omega \triangle \Omega_{\pm} \subset B_{\eps}(\Sigma)$,
        \item $\Sigma_{\pm} = (\partial \Omega_{\pm}) \cap B_{\eps}(\Sigma)$ are smooth closed embedded hypersurfaces contained in $\interior(N)$,
        \item $H_{\Sigma_-, \Omega_-} < h\mid_{\Sigma_-}$,
        \item $H_{\Sigma_+,\Omega_+} > h\mid_{\Sigma_+}$.
    \end{itemize}
\end{lemma}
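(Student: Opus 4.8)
\emph{Proof proposal.} The plan is to mimic the proof of Lemma~\ref{lem:neighborhood_cutting1} essentially line by line, the only change being that strict $\cA^h$-stability of $\Sigma$ replaces the instability hypothesis: this turns the eigenvalue in \eqref{eqn:jacobi} from a negative number into a positive one, and thereby reverses every mean curvature comparison. Concretely, I would first fix $\nu$, the unit normal along $\Sigma$ pointing out of $\Omega$, and invoke strict stability: since $\Sigma$ is closed, $Q_\Sigma^h$ is positive definite on $C^{\infty}(\Sigma)$, so the Jacobi operator $\mathcal{L}_\Sigma^h = -\Delta_\Sigma - (|A_\Sigma|^2 + \Ric(\nu,\nu) + \partial_\nu h)$ has least eigenvalue $\lambda_1 > 0$, with a smooth first eigenfunction $\varphi > 0$. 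Thus
\[ -\Delta_{\Sigma}\varphi - (|A_{\Sigma}|^2 + \Ric(\nu,\nu))\varphi \;=\; (\partial_\nu h)\varphi + \lambda_1\varphi \;>\; (\partial_\nu h)\varphi, \]
which is exactly \eqref{eqn:jacobi} with the inequality reversed.

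Next, exactly as in Lemma~\ref{lem:neighborhood_cutting1}, I would extend $\varphi\nu$ to a vector field supported in a neighborhood of $\Sigma$ and take the foliation $\{\Sigma_t = \partial\Omega_t\}_{t\in(-\delta,\delta)}$ generated by its flow, with $\Sigma_0 = \Sigma$, $\Omega_0 = \Omega$, and $\Omega_t \supset \Omega$ for $t > 0$. Since the $t$-derivative at $t=0$ of $H_{\Sigma_t,\Omega_t}$ is the left side of the displayed identity, while the $t$-derivative at $t=0$ of $h\mid_{\Sigma_t}$ is $(\partial_\nu h)\varphi$, the displayed strict inequality gives $\frac{d}{dt}\big|_{t=0}(H_{\Sigma_t,\Omega_t} - h\mid_{\Sigma_t}) = \lambda_1\varphi > 0$. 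Hence for every sufficiently small $\eta > 0$ we obtain $H_{\Sigma_{-\eta},\Omega_{-\eta}} < h\mid_{\Sigma_{-\eta}}$ and $H_{\Sigma_\eta,\Omega_\eta} > h\mid_{\Sigma_\eta}$. Setting $\Omega_- = \Omega_{-\eta}$ and $\Omega_+ = \Omega_\eta$, and choosing $\eta$ small enough (depending on $\eps$) that the whole foliation stays inside $B_\eps(\Sigma) \cap \interior(N)$ and that $B_\eps(\Sigma)$ is disjoint from the other components of $\partial\Omega$, delivers all five claimed bullet points: outside $B_\eps(\Sigma)$ the sets $\Omega_\pm$ coincide with $\Omega$, so $\partial\Omega_\pm$ is again smooth, closed, embedded, interior, and $\Sigma_\pm = (\partial\Omega_\pm) \cap B_\eps(\Sigma) = \Sigma_{\pm\eta}$.

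I do not expect any real obstacle here: all the analytic input (the first-variation-of-mean-curvature formula along a normal variation, smoothness of the foliation, the positivity of $\varphi$) is already in place from the proof of Lemma~\ref{lem:neighborhood_cutting1}. The one point that requires care is the bookkeeping of signs. Strict stability is equivalent to $\lambda_1 > 0$, so pushing $\Sigma$ \emph{outward} now makes the mean curvature overtake $h$ (rather than fall behind it, as in the unstable case); this is exactly why the conclusions of this lemma are the mirror image of those of Lemma~\ref{lem:neighborhood_cutting1}.
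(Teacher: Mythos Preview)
Your proposal is correct and follows exactly the paper's approach: the paper's proof is literally the one-line remark that ``the proof is identical to Lemma~\ref{lem:neighborhood_cutting1}, where we flip the direction of the inequality \eqref{eqn:jacobi} due to strict stability,'' and you have simply written out that flip in detail. The sign bookkeeping you flagged is the only content, and you have it right.
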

\begin{proof}
    The proof is identical to Lemma \ref{lem:neighborhood_cutting1}, where we flip the direction of the inequality \eqref{eqn:jacobi} due to strict stability.
\end{proof}

\subsection{Localized min-max theorem with barrier}
We are now equipped to prove a localized version of Theorem \ref{thm:basic_min-max}.

\begin{theorem}\label{thm:localized_compact}
    Suppose $\partial N$ is a barrier for $(h, P_+, P_-)$. Assume
    \[ \mathbf{L}^h(\Pi) > \sup_{z \in Z} \cA^h(\Phi_0(z)). \]
    
    Then there is a smooth, closed, two-sided, almost embedded $h$-PMC hypersurface $\Sigma$ contained in $\mathrm{Int}(N)$ satisfying
    \begin{itemize}
        \item $\cH^n(\Sigma) \leq \sup_{x \in X} \cA^h(\Phi_0(x)) + \int_{\{h > 0\}} h$,
        \item $\mathrm{index}_{\cA^h}(\Sigma) \leq k$,
        \item $\Sigma \cap \Loc(\Phi_0, h) \neq \varnothing$.
    \end{itemize}
\end{theorem}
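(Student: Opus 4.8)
The plan is to prove the theorem first under the extra assumption that $(g,h)$ is a good pair, by an iteration driven by Theorem~\ref{thm:generic_min-max}, and then to remove the good-pair hypothesis by density (Theorem~\ref{thm:generic}) together with the compactness theorem for $h$-PMC hypersurfaces of bounded area and index (\cite[Theorem 2.8]{zhou_mult}).

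\emph{The good-pair iteration.} Suppose $(g,h)$ is a good pair. Applying Theorem~\ref{thm:generic_min-max} gives $\Omega\in\cCEE(N)$ with $\cA^h(\Omega)=\mathbf{L}^h(\Pi)$ and $\Sigma=\partial\Omega$ a smooth closed embedded hypersurface in $\interior(N)$ with $H_{\Sigma,\Omega}=h\mid_\Sigma$ and $1\le\mathrm{index}_{\cA^h}(\Sigma)\le k$. The area bound is automatic, since $\cH^n(\Sigma)=\cA^h(\Omega)+\int_\Omega h\le\mathbf{L}^h(\Pi)+\int_{\{h>0\}}h\le\sup_{x\in X}\cA^h(\Phi_0(x))+\int_{\{h>0\}}h$ (the constant sequence $\Phi_0$ competes in $\Pi$). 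If $\Sigma\cap\Loc(\Phi_0,h)\ne\varnothing$ we are done, so assume not. Then $\Sigma$ is disjoint from $\Span(\Phi_0)$, so each connected component of $\Sigma$ lies entirely in $I_+(\Phi_0)$ or entirely in $I_-(\Phi_0)$; moreover, since $\Sigma$ also avoids $\overline{\{h<0\}}$ along its $I_+$-components and $\overline{\{h>0\}}$ along its $I_-$-components, we get $h\ge 0$ on every $I_+$-component of $\Sigma$ and $h\le 0$ on every $I_-$-component. Let $\Sigma^*$ be an $\cA^h$-unstable component of $\Sigma$ (one exists because the index is $\ge 1$); the remaining components are stable, hence \emph{strictly} stable because $(g,h)$ is a good pair. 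Now apply Lemma~\ref{lem:neighborhood_cutting1} to $\Sigma^*$ and Lemma~\ref{lem:neighborhood_cutting2} to the strictly stable components, pushing each component of $\Sigma$ slightly to the side for which the relevant lemma yields a mean-curvature inequality of the correct sign, and feed the resulting set into the subdomain construction of \S\ref{subsec:subdomains} (the two cases there, $\Span(\Phi_0)\subset\Omega$ or $\Span(\Phi_0)\cap\overline\Omega=\varnothing$, are the only possibilities because $\partial\Omega$ avoids $\Span(\Phi_0)$). Using the sign conditions on $h$ along the components of $\Sigma$ to orient the complementary regions as $E_+'$ or $E_-'$, this produces a proper subdomain $N'\subsetneq N$ with a decomposition $\tilde N\setminus N'=E_+'\sqcup E_-'$ such that $\partial N'$ is a barrier for $(h,P_+',P_-')$, $\Span(\Phi_0)\cup\Loc(\Phi_0,h)\subset N'$ (taking $\eps$ small in the cutting lemmas), and $\Sigma\not\subset N'$. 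By \S\ref{subsec:subdomains}, $\Phi_0$ maps $X$ into $\mathcal{C}_{E_+',E_-'}(N')$, the associated homotopy class $\Pi(N')$ has width $\ge\mathbf{L}^h(\Pi)>\sup_{z\in Z}\cA^h(\Phi_0(z))$, and $(g,h)$ restricts to a good pair on $N'$; so Theorem~\ref{thm:generic_min-max} applies in $N'$ and gives a new embedded $h$-PMC hypersurface with index in $[1,k]$ and the same area bound, and since $\Sigma\not\subset N'$ it is distinct from $\Sigma$. We iterate. All hypersurfaces produced this way are closed, embedded, $h$-PMC, contained in $\interior(N)$, pairwise distinct (each step cuts the previous one out of the ambient domain), with index in $[1,k]$ and area at most $\sup_{x\in X}\cA^h(\Phi_0(x))+\int_{\{h>0\}}h$. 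By \cite[Theorem 2.8]{zhou_mult} the family of closed $h$-PMC hypersurfaces in $\interior(N)$ with these bounds is compact, and since $(g,h)$ is a good pair each such hypersurface is nondegenerate and hence isolated, so this family is finite. Therefore the iteration must stop after finitely many steps, which can only happen because at some stage the min-max hypersurface meets $\Loc(\Phi_0,h)$; that hypersurface satisfies all the asserted conclusions.

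\emph{Removing the good-pair hypothesis.} For general $(g,h)$ with $\partial N$ a barrier, pick good pairs $(g_i,h_i)\to(g,h)$ in $C^\infty$ (Theorem~\ref{thm:generic}), arranging in addition that $\overline{\{h_i<0\}}$ and $\overline{\{h_i>0\}}$ stay in arbitrarily small neighborhoods of $\overline{\{h<0\}}$ and $\overline{\{h>0\}}$. The barrier condition and the strict width inequality persist for large $i$ (upper bounds on $\cA^h$ give upper bounds on area, so the functionals converge uniformly on the bounded-area competitors relevant to min-max), so the good-pair case produces embedded $h_i$-PMC hypersurfaces $\Sigma_i\subset\interior(N)$ with uniform area and index bounds and $\Sigma_i\cap\Loc(\Phi_0,h_i)\ne\varnothing$. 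Passing to a varifold limit $\Sigma$ via \cite[Theorem 2.8]{zhou_mult}, $\Sigma$ is a smooth almost embedded $h$-PMC hypersurface in $\interior(N)$ (it avoids $\partial N$ by the maximum principle and the barrier condition), the area and index bounds pass to the limit, and because $\Loc(\Phi_0,h_i)$ shrinks into $\Loc(\Phi_0,h)$ while $\mathrm{spt}\|\Sigma_i\|\to\mathrm{spt}\|\Sigma\|$ locally in Hausdorff distance (by the monotonicity formula), $\Sigma$ meets $\Loc(\Phi_0,h)$.

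\emph{Where the difficulty lies.} The heart of the argument is the cutting step. Because the barrier requirement is one-sided, pushing the unstable component $\Sigma^*$ and pushing the strictly stable components yield mean-curvature inequalities in \emph{opposite} directions (Lemma~\ref{lem:neighborhood_cutting1} versus Lemma~\ref{lem:neighborhood_cutting2}), so one must push each component the correct way and then assign the complementary pieces to $E_+'$ or $E_-'$ so that both barrier inequalities, $H_{P_+',E_+'}<h$ and $H_{P_-',E_-'}<-h$, hold simultaneously. The sign information ($h\ge 0$ on the $I_+$-components of $\Sigma$ and $h\le 0$ on the $I_-$-components), which is exactly what the definition of $\Loc(\Phi_0,h)$ buys us once $\Sigma$ misses $\Loc(\Phi_0,h)$, is precisely what makes this bookkeeping close — this is the place where ``$-a>a$ for $a<0$'' enters. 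A secondary technical point is controlling the localizer under the approximation $h_i\to h$, so that intersecting $\Loc(\Phi_0,h_i)$ survives passage to the limit.
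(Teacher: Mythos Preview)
Your approach matches the paper's: reduce to good pairs, apply Theorem~\ref{thm:generic_min-max}, cut using Lemmas~\ref{lem:neighborhood_cutting1}--\ref{lem:neighborhood_cutting2} when $\Sigma$ misses $\Loc(\Phi_0,h)$, iterate, and terminate via compactness plus nondegeneracy. Two points need repair. First, the assertion ``the remaining components are stable'' is false in general: the bound $1\le\mathrm{index}_{\cA^h}(\Sigma)\le k$ allows several unstable components when $k\ge 2$, so Lemma~\ref{lem:neighborhood_cutting2} cannot be applied to all of them. The paper handles every component according to its own stability and location (three sub-cases in each of the two cases $\Loc\subset\Omega$ and $\Loc\cap\overline\Omega=\varnothing$); the termination argument is unaffected since each step still removes at least one unstable $h$-PMC hypersurface from the working domain.

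Second, your sentence ``pushing each component to the side for which the relevant lemma yields a mean-curvature inequality of the correct sign'' hides the actual barrier verification, and the nonstrict conclusions $h\ge 0$ on $I_+$-components and $h\le 0$ on $I_-$-components that you extract from $\Sigma\cap\Loc=\varnothing$ are not quite enough to make the paper's specific case analysis close (for example, in Case~1 with a stable component in $I_+$, the paper pushes inward and needs $H_{\Sigma_-,E_+'}<0<h$, which fails where $h=0$). The paper resolves this by an additional preliminary perturbation $h\mapsto h+\eps\eta$ with $\eta>0$ on $\overline{I_+(\Phi_0)}$ and $\eta<0$ on $\overline{I_-(\Phi_0)}$, so that the signs become strict on $I_\pm\setminus\Loc$; this same trick forces $\Loc(\Phi_0,h_i)\subset\Loc(\Phi_0,h)$ for the approximating sequence, cleanly handling the ``secondary technical point'' you identify at the end.
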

\begin{proof}
    We give the proof when $(g, h)$ is a good pair satisfying
    \[ h\mid_{I_+(\Phi_0) \setminus \Loc(\Phi_0, h)} > 0\ \ \text{and}\ \ h\mid_{I_-(\Phi_0) \setminus \Loc(\Phi_0, h)} < 0. \]
    By Theorem \ref{thm:generic} and \cite[Theorem 2.8]{zhou_mult}, the general conclusion follows by approximation\footnote{To ensure $h$ has the correct sign on the respective sets, we can first add to $h$ the function $\eps\eta$ for $\eps > 0$ small and $\eta \in C^{\infty}(N)$ any smooth function satisfying $\eta\mid_{\overline{I}_+(\Phi_0)} > 0$ and $\eta\mid_{\overline{I}_-(\Phi_0)} < 0$, and then take $\eps \to 0$.}.

    By Theorem \ref{thm:generic_min-max}, there is $\Omega \in \cCEE(N)$ so that $\cA^h(\Omega) = \mathbf{L}^h(\Pi)$ and $\Sigma = \partial \Omega$ is a smooth, closed, embedded, $h$-PMC hypersurface contained in $\interior(N)$, satisfying $1 \leq \mathrm{index}_{\cA^h}(\Sigma) \leq k$.

    If $\Sigma \cap \Loc(\Phi_0, h) \neq \varnothing$, then we are done.

    First, suppose that $\Loc(\Phi_0, h) \subset \Omega$. We modify $\Omega$ in a small neighborhood of $\Sigma$ in the following way. Let $\Sigma^*$ be a component of $\Sigma$, which (by bumpiness) is either unstable or strictly stable for $\cA^h$.
    \begin{enumerate}
        \item If $\Sigma^* \subset I_+$, then $h\mid_{\Sigma^*} > 0$. In this case, we modify $\Omega$ near $\Sigma^*$ by taking $\Omega_-$ from Lemma \ref{lem:neighborhood_cutting1} or Lemma \ref{lem:neighborhood_cutting2}, choosing the perturbation small enough so that the modified boundary $\Sigma_-$ has $H_{\Sigma_-, \Omega_-} > 0$.
        \item If $\Sigma^*$ is unstable and $\Sigma^* \subset I_-$, then we modify $\Omega$ near $\Sigma^*$ by taking $\Omega_-$ from Lemma \ref{lem:neighborhood_cutting1}.
        \item If $\Sigma^*$ is strictly stable and $\Sigma^* \subset I_-$, then we modify $\Omega$ near $\Sigma^*$ by taking $\Omega_+$ from Lemma \ref{lem:neighborhood_cutting1}.
    \end{enumerate}
    Let $\Omega'$ denote the modification of $\Omega$. We take the subdomain $N' = N \cap \overline{\Omega}'$, with the decomposition $E_{\pm}'$ given in \S\ref{subsec:subdomains}. By construction, for $\Sigma^*$ in case (1) above, the modified boundary $\Sigma_-$ will lie in $P_+'$. Then we have
    \[ H_{\Sigma_-, E_+'} < 0 < h\mid_{\Sigma_-}, \]
    so these boundary components satisfy the barrier property. For $\Sigma^*$ in cases (2) and (3), the modified boundary will lie in $P_-'$. Then the barrier property follows directly from Lemmas \ref{lem:neighborhood_cutting1} and \ref{lem:neighborhood_cutting2} respectively.

    Second, suppose that $\Loc(\Phi_0, h) \cap \overline{\Omega} = \varnothing$. We modify $\Omega$ in a small neighborhood of $\Sigma$ in the following way. Let $\Sigma^*$ be a component of $\Sigma$, which (by bumpiness) is either unstable or strictly stable for $\cA^h$.
    \begin{enumerate}
        \item If $\Sigma^* \subset I_-$, then $h\mid_{\Sigma^*} < 0$. In this case, we modify $\Omega$ near $\Sigma^*$ by taking $\Omega_+$ from Lemma \ref{lem:neighborhood_cutting1} or Lemma \ref{lem:neighborhood_cutting2}, choosing the perturbation small enough so that the modified boundary $\Sigma_+$ has $H_{\Sigma_+, \Omega_+} < 0$.
        \item If $\Sigma^*$ is unstable and $\Sigma^* \subset I_+$, then we modify $\Omega$ near $\Sigma^*$ by taking $\Omega_+$ from Lemma \ref{lem:neighborhood_cutting1}.
        \item If $\Sigma^*$ is strictly stable and $\Sigma^* \subset I_+$, then we modify $\Omega$ near $\Sigma^*$ by taking $\Omega_-$ from Lemma \ref{lem:neighborhood_cutting2}.
    \end{enumerate}
    Let $\Omega'$ denote the modification of $\Omega$. We take the subdomain $N' = N \setminus \interior(\Omega')$, with the decomposition $E_{\pm}'$ given in \S\ref{subsec:subdomains}. By construction, for $\Sigma^*$ in case (1) above, the modified boundary $\Sigma_+$ will lie in $P_-'$. Then we have
    \[ H_{\Sigma_+, E_-'} < 0 < -h\mid_{\Sigma_-}, \]
    so these boundary components satisfy the barrier property. For $\Sigma^*$ in cases (2) and (3), the modified boundary will lie in $P_+'$. Then the barrier property follows directly from Lemmas \ref{lem:neighborhood_cutting1} and \ref{lem:neighborhood_cutting2} respectively.
    
    In both cases, we find a subdomain $N'$ with decomposition $E_{\pm}'$ so that
    \begin{itemize}
        \item $\Span(\Phi_0) \subset \Loc(\Phi_0, h) \subset N'$,
        \item $\partial N'$ is a barrier for $(h, P_+', P_-')$,
        \item the unstable components of $\Sigma$ (of which there is at least one by Theorem \ref{thm:generic_min-max}) are not contained in $N'$,
        \item $(g\mid_{N'}, h\mid_{N'})$ is a good pair\footnote{It follows from the definition that the restriction of a good pair to a subdomain is a good pair.},
        \item $\sup_{x \in X} \cA^h(\Phi_0(x)) \geq \mathbf{L}^h(\Pi(N')) \geq \mathbf{L}^h(\Pi) > \sup_{z\in Z} \cA^h(\Phi_0(z))$.
    \end{itemize}

    Hence, we can re-apply Theorem \ref{thm:generic_min-max} on $N'$, and therefore iterate the above steps. Since each iteration produces a distinct, connected, $\cA^h$-unstable, smooth, closed, embedded, $h$-PMC hypersurface $\Sigma'$ with
    \[ \cH^n(\Sigma') \leq \sup_{x \in X} \cA^h(\Phi_0(x)) + \int_{\{h > 0\}} h < \infty \]
    and $\mathrm{index}_{\cA^h}(\Sigma') \leq k$, the iteration must terminate after at most finitely many steps by the bumpiness of the metric (if there were infinitely many such hypersurfaces, \cite[Theorem 2.6(iv)]{zhou_mult} yields a limit embedded $h$-PMC hypersurface that is degenerate). Hence, at some step of the iteration the output of Theorem \ref{thm:generic_min-max} will satisfy $\Sigma \cap \Loc(\Phi_0, h) \neq \varnothing$.
\end{proof}

\part{Noncompact Manifolds}\label{part:noncompact}
In this part, we apply the localized min-max result of Theorem \ref{thm:localized_compact} to develop a min-max theory in noncompact manifolds. We also provide applications of the theory.

\section{Setup}\label{sec:noncompact_setup}

Let $(M, g)$ be a complete Riemannian manifold of dimension $3 \leq n+1 \leq 7$.

\subsection{Space of subsets}
Let $\Omega_0$ be an open set with smooth closed boundary (we emphasize that $\Omega_0$ need not be bounded).

Let $\mathcal{C}_{\Omega_0}(M)$ be the collection of sets of finite perimeter $\Omega \in \mathcal{C}(M)$ so that $\Omega \triangle \Omega_0$ is bounded.

\subsection{Functional}\label{subsec:noncompact_functional}
Let $h \in C^{\infty}_{\text{loc}}(M)$. For $\Omega \in \mathcal{C}_{\Omega_0}(M)$, we define
\[ \cA^h(\Omega) = \cH^n(\partial^*\Omega) - \int_{\Omega \setminus \Omega_0} h + \int_{\Omega_0 \setminus \Omega} h. \]
By the definition of $\mathcal{C}_{\Omega_0}(M)$, $\cA^h(\Omega)$ is well-defined for all $\Omega \in \mathcal{C}_{\Omega_0}(M)$.

\subsection{Admissible functions}
We let $\mathcal{W}_{\Omega_0} \subset C_{\text{loc}}^\infty(M)$ denote the set of locally smooth functions $h$ with the property that there is a compact set $K_h \subset M$ satisfying
\[ h\mid_{\Omega_0 \setminus K_h} \geq 0\ \ \text{and}\ \ h\mid_{(M\setminus \Omega_0)\setminus K_h} \leq 0. \]
Note that if $\Omega \in \mathcal{C}_{\Omega_0}(M)$, $h \in \mathcal{W}_{\Omega_0}$, and $\cA^h(\Omega) \leq \Lambda < \infty$, then
\begin{align*}
    \cH^n(\partial^* \Omega)
    & = \cA^h(\Omega) + \int_{\Omega \setminus \Omega_0} h - \int_{\Omega_0 \setminus \Omega} h\\
    & \leq \Lambda + \int_{(\Omega \setminus \Omega_0) \cap K_h} h - \int_{(\Omega_0 \setminus \Omega) \cap K_h} h\\
    & \leq \Lambda + \cH^{n+1}(K_h)\sup_{K_h} |h| < \infty.
\end{align*}
Hence, for these functions, uniform $\cA^h$ bounds imply uniform area bounds.

\subsection{Min-max setup}
We follow the same definitions as \S\ref{subsec:min-max_setup}.

Let $X^k$ be a connected cubical complex of dimension $k$ in some $I^m = [0, 1]^m$. Let $Z \subset X$ be a nonempty cubical subcomplex.

Let $\Phi_0 : X \to \mathcal{C}_{\Omega_0}(M)$ be a continuous map in the $\mathbf{F}$-topology.

\begin{definition}[Homotopy class]
    Let $\Pi$ be the set of all sequences of $\mathbf{F}$-continuous maps $\{\Phi_i : X \to \mathcal{C}_{\Omega_0}(M)\}_i$ so that there are $\mathcal{F}$-continuous homotopies $\{\Psi_i : [0, 1] \times X \to \mathcal{C}_{\Omega_0}(M)\}_i$ satisfying $\Psi_i(0, \cdot) = \Phi_i$, $\Psi_i(1, \cdot) = \Phi_0$, and
    \[ \limsup_{i\to\infty} \sup\{\mathbf{F}(\Psi_i(t, z), \Phi_0(z)) \mid t \in [0, 1],\ z \in Z\} = 0. \]
    Such a sequence $\{\Phi_i\}$ is called an \emph{$(X, Z)$-homotopy sequence of mappings to $\mathcal{C}_{\Omega_0}(M)$}, and $\Pi$ is called the \emph{$(X, Z)$-homotopy class of $\Phi_0$}.
\end{definition}

\begin{definition}[Width]
    The \emph{$h$-width} of $\Pi$ is
    \[ \mathbf{L}^h(\Pi) = \inf_{\{\Phi_i\} \in \Pi} \limsup_{i \to \infty} \sup_{x \in X} \cA^h(\Phi_i(x)). \]
\end{definition}

\begin{definition}[Minimizing sequence]
    We say $\{\Phi_i\}_i \in \Pi$ is a \emph{minimizing sequence} if
    \[ \limsup_{i \to \infty} \sup_{x \in X} \cA^h(\Phi_i(x)) = \mathbf{L}^h(\Pi). \]
\end{definition}

\begin{definition}[Critical set]
    The \emph{critical set} of a minimizing sequence $\{\Phi_i\}_i \in \Pi$ is
    \[ \mathbf{C}(\{\Phi_i\}) = \left\{V \in \mathcal{V}_n(M) \mid V = \lim_{j\to \infty} |\partial^*\Phi_{i_j}(x_j)|,\ \lim_{j \to \infty} \cA^h(\Phi_{i_j}(x_j)) = \mathbf{L}^h(\Phi)\right\}. \]
\end{definition}

\section{Localization in noncompact manifolds}\label{sec:localization_noncompact}

We use our localized min-max theory in compact manifolds with barriers to deduce a localized min-max theory in noncompact manifolds.

\subsection{Setup}
We follow the same definitions as \S\ref{subsec:local_setup}.

\begin{definition}[Span]
    The \emph{span} of $\Phi_0$ is 
    \[ \Span(\Phi_0) = \overline{\bigcup_{x \in X} \partial^*\Phi_0(x)}. \]
\end{definition}

\begin{definition}[Unspanned halves]
    For any $x_0 \in X$\footnote{Recall that we can assume (without loss of generality) that $\partial \Phi_0(x) = \overline{\partial^*\Phi_0(x)}$ for all $x \in X$.}, we define the positive and negative \emph{unspanned halves} by
    \[ I_+(\Phi_0) = (M \cap \interior(\Phi_0(x_0))) \setminus \Span(\Phi_0)\ \ \text{and}\ \ I_-(\Phi_0) = (M \setminus \overline{\Phi_0(x_0)}) \setminus \Span(\Phi_0). \]
    Note that $I_{\pm}(\Phi_0)$ are well-defined: for any $x \in X$ we have
    \[ I_+(\Phi_0) \subset \interior(\Phi_0(x))\ \ \text{and}\ \ I_-(\Phi_0) \cap \overline{\Phi_0(x)} = \varnothing. \]
\end{definition}

\begin{definition}[Localizer]
    The \emph{localizer} of $\Phi_0$ and $h$ is
    \[ \Loc(\Phi_0, h) = \Span(\Phi_0) \cup (\overline{I_+(\Phi_0)} \cap \overline{\{h < 0\}}) \cup (\overline{I_-(\Phi_0)} \cap \overline{\{h > 0\}}). \]
\end{definition}

\begin{remark}
    For $h \in \mathcal{W}_{\Omega_0}$, the localizer $\Loc(\Phi_0, h)$ is a compact set.
\end{remark}

\subsection{Compact subdomains}\label{subsec:compact_subdomains}
Here we discuss how to restrict the min-max setup to special compact subdomains of the original complete manifold, where we assume $h \in \mathcal{W}_{\Omega_0}$.

Let $N \subset M$ be a compact subdomain with smooth boundary satisfying $\mathrm{Loc}(\Phi_0, h) \subset N$.

We give $N$ a boundary decomposition as in \S\ref{subsec:subsets} by
\[ P_+ = \partial N \cap I_+(\Phi_0)\ \ \text{and}\ \ P_- = \partial N \cap I_-(\Phi_0). \]
We also set
\[ F_+ = I_+(\Phi_0) \setminus N \ \ \text{and}\ \ F_- = I_-(\Phi_0) \setminus N. \]
Let $\mathcal{C}_{F_+, F_-}(M)$ be the space of $\Omega \in \mathcal{C}_{\Omega_0}(M)$ satisfying
\[ F_+ \subset \Omega \ \ \text{and}\ \ \Omega \cap F_- = \varnothing. \]

Let $(\tilde{N}, \tilde{g})$ be a closed Riemannian manifold of dimension $n+1$ containing $(N, g)$ isometrically, and so that $\tilde{N}\setminus N$ has a decomposition $E_+ \sqcup E_-$ into unions of connected components of $\tilde{N} \setminus N$ satisfying $\partial E_{\pm} = P_{\pm}$.

We are now in the setting of Part \ref{part:compact}. We make a few observations.
\begin{itemize}
    \item There is a natural $\mathbf{F}$-homeomorphism from $\mathcal{C}_{F_+, F_-}(M)$ to $\cCEE(N)$ given by deleting $F_+$ from $\Omega$ and adding $E_+$.
    \item Using this homeomorphism, $\cA^h$ as defined in \S\ref{subsec:noncompact_functional} is well-defined on $\cCEE(N)$. Moreover, this definition of $\cA^h$ only differs from \S\ref{subsec:prescription} by a fixed constant, which does not affect the theory in Part \ref{part:compact} (since the difference of $\cA^h$ between any two sets is unchanged).
    \item By construction, the image of $\Phi_0$ under this homeomorphism lies in $\mathcal{C}_{E_+, E_-}(N)$. Let $\Pi(N)$ denote the $(X, Z)$-homotopy class of $\Phi_0$ for maps to $\mathcal{C}_{E_+, E_-}(N)$, which satisfies $\Pi \supset \Pi(N)$ by definition.
    \item Let $\mathbf{L}^h(\Pi(N))$ denote the $h$-width of $\Pi(N)$. Then $\mathbf{L}^h(\Pi(N)) \geq \mathbf{L}^h(N)$.
\end{itemize}

\subsection{Localized min-max theorem in noncompact manifolds}

In this subsection, we prove the main min-max theorem in noncompact manifolds.

\begin{theorem}\label{thm:localized_noncompact}
    Suppose $h \in \mathcal{W}_{\Omega_0}$. Suppose
    \[ \mathbf{L}^h(\Pi) > \sup_{z \in Z} \cA^h(\Phi_0(z)). \]
    
    Then there is a smooth, complete, almost embedded, $h$-PMC hypersurface $\Sigma$ in $M$ satisfying
    \begin{itemize}
        \item $\cH^n(\Sigma) \leq \sup_{x\in X}\cA^h(\Phi_0(x)) + \cH^{n+1}(K_h)\sup_{K_h}|h| < \infty$,
        \item $\mathrm{index}_{\cA^h}(\Sigma) \leq k$,
        \item $\Sigma \cap \Loc(\Phi_0, h) \neq \varnothing$.
    \end{itemize}
\end{theorem}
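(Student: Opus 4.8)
The plan is to exhaust $M$ by compact subdomains and apply the localized compact min-max theorem (Theorem~\ref{thm:localized_compact}) on each, then extract a limit using the compactness theory for $h$-PMC hypersurfaces with bounded area and index. First I would fix a reference open set $\Omega_0$ and a sweepout $\Phi_0$ realizing the hypothesis $\mathbf{L}^h(\Pi) > \sup_{z\in Z}\cA^h(\Phi_0(z))$. Since $h \in \mathcal{W}_{\Omega_0}$, the localizer $\Loc(\Phi_0, h)$ is compact (by the Remark above), so I may choose an exhaustion $\{N_i\}$ of $M$ by compact subdomains with smooth boundary, each containing $\Loc(\Phi_0, h)$. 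For each $i$, I set up the boundary decomposition $P_\pm = \partial N_i \cap I_\pm(\Phi_0)$ and the corresponding closed extension $(\tilde N_i, \tilde g_i)$ with $E_+ \sqcup E_-$, exactly as in \S\ref{subsec:compact_subdomains}, so that the $\mathbf{F}$-homeomorphism between $\mathcal{C}_{F_+, F_-}(M)$ and $\cCEE(N_i)$ identifies $\Phi_0$ with a sweepout of finite perimeter sets in $N_i$, and $\cA^h$ agrees with the functional of \S\ref{subsec:prescription} up to an additive constant. The key preliminary point is that $\partial N_i$ need not be a barrier for $(h, P_+, P_-)$ a priori; however, following the outline after Theorem~\ref{thm:main}, I can modify $h$ in a thin collar of $\partial N_i$ so that $h|_{P_+} > H_{P_+, E_+}$ and $-h|_{P_-} > H_{P_-, E_-}$ — this is possible because on $P_+ \subset I_+(\Phi_0)$, the admissibility condition gives $h \geq 0$ outside $K_h$ (and for $i$ large, $P_+$ lies outside $K_h$), while on $P_-$ we have $h \leq 0$, so decreasing $h$ slightly near $P_-$ and increasing it slightly near $P_+$ away from $K_h$ achieves the barrier condition without changing $h$ on $\Loc(\Phi_0,h)$; for $i$ small or for the part of $\partial N_i$ meeting $K_h$, I instead simply enlarge $N_i$ or choose $N_i$ so that $\partial N_i$ avoids $K_h$ entirely, which is possible since $K_h$ is compact. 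Call the modified function $h_i$; it agrees with $h$ on $\Loc(\Phi_0, h)$ and satisfies $h_i \to h$ locally uniformly as $i \to \infty$ (since the collars shrink).

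Next I would apply Theorem~\ref{thm:localized_compact} on $(N_i, \partial N_i, g)$ with prescribing function $h_i$. The width hypothesis transfers: $\mathbf{L}^{h_i}(\Pi(N_i)) \geq \mathbf{L}^h(\Pi) > \sup_{z\in Z}\cA^h(\Phi_0(z)) = \sup_{z\in Z}\cA^{h_i}(\Phi_0(z))$, where the last equality holds because $\Phi_0(Z)$ has reduced boundary in $\Loc(\Phi_0,h)$ (after possibly arranging this; more carefully, the width inequality is strict and the modification of $h$ is small, so strictness persists). Theorem~\ref{thm:localized_compact} then produces, for each $i$, a smooth closed almost embedded $h_i$-PMC hypersurface $\Sigma_i \subset \interior(N_i)$ with
\[
\cH^n(\Sigma_i) \leq \sup_{x\in X}\cA^{h_i}(\Phi_0(x)) + \int_{\{h_i > 0\}} h_i, \qquad \mathrm{index}_{\cA^{h_i}}(\Sigma_i) \leq k, \qquad \Sigma_i \cap \Loc(\Phi_0, h) \neq \varnothing.
\]
Since $h_i \to h$ locally uniformly and $h_i = h$ off a shrinking collar outside $K_h$, the area bound and the index bound are uniform in $i$: $\cH^n(\Sigma_i) \leq \sup_{x\in X}\cA^h(\Phi_0(x)) + \cH^{n+1}(K_h)\sup_{K_h}|h| + o(1)$, using the estimate in \S\ref{subsec:noncompact_functional} that $\cA^h$-bounds give area bounds. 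Crucially, each $\Sigma_i$ meets the fixed compact set $\Loc(\Phi_0, h)$.

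Finally I would extract a limit. On any fixed compact set $W \subset M$, for $i$ large $\Sigma_i$ is an $h$-PMC hypersurface in $W$ (since $h_i = h$ on $W$ eventually) with uniformly bounded area and index. By the compactness theorem for almost embedded PMC hypersurfaces with bounded area and index — \cite[Theorem 2.8]{zhou_mult}, building on \cite{sharp} — a subsequence converges (with multiplicity, in the varifold and $C^\infty_{\mathrm{loc}}$-graphical-away-from-finitely-many-points sense) on $W$ to a smooth almost embedded $h$-PMC hypersurface. Diagonalizing over an exhaustion $W_j \uparrow M$ yields a smooth, complete, almost embedded $h$-PMC hypersurface $\Sigma \subset M$ with $\cH^n(\Sigma) \leq \sup_{x\in X}\cA^h(\Phi_0(x)) + \cH^{n+1}(K_h)\sup_{K_h}|h|$ (by lower semicontinuity of area under varifold convergence, applied to each $W_j$ and letting $j\to\infty$) and $\mathrm{index}_{\cA^h}(\Sigma) \leq k$ (by upper semicontinuity of the index under this convergence, again from \cite[Theorem 2.8]{zhou_mult}). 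The completeness and the nontriviality of $\Sigma$ both come from the fact that $\Sigma_i \cap \Loc(\Phi_0, h) \neq \varnothing$ for all $i$: since $\Loc(\Phi_0,h)$ is compact and the areas are uniformly bounded, the intersections cannot all escape to the boundary of $W_j$, so the limit $\Sigma$ actually meets $\Loc(\Phi_0, h)$, hence is nonempty; and $\Sigma$ has no boundary in any $W_j$, so it is complete.

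The main obstacle I expect is the barrier modification step: arranging $h_i|_{P_\pm}$ to satisfy the one-sided barrier inequalities relative to $H_{P_\pm}$ while keeping $h_i = h$ on $\Loc(\Phi_0,h)$, keeping the width inequality strict, and ensuring $h_i \to h$ locally uniformly with uniform control on $\int_{\{h_i>0\}} h_i$. The sign hypothesis $h \in \mathcal{W}_{\Omega_0}$ is exactly what makes this possible — this is the place where the observation "$-a > a$ for $a < 0$" enters, precisely as flagged in equation \eqref{eqn:intro} of the introduction: the outward mean curvature of the new domain is $\approx -H_{P_-} = -h|_{P_-} > h|_{P_-}$ on the negative side because $h \leq 0$ there. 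The rest is a careful but standard diagonal limit using the cited compactness machinery.
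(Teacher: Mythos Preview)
Your proposal is correct and matches the paper's proof closely: exhaust by compact $N_i \supset \Loc(\Phi_0,h)$, modify $h$ in a thin collar of $\partial N_i$ so that $\partial N_i$ becomes a barrier with $\int|h-\tilde h|$ arbitrarily small, apply Theorem~\ref{thm:localized_compact}, and pass to the limit via \cite[Theorem~2.8]{zhou_mult}. One clarification: the barrier modification step does not itself require the sign hypothesis $h\in\mathcal{W}_{\Omega_0}$ (you can always modify $h$ arbitrarily in a thin collar with small $L^1$ cost, regardless of sign); the admissibility condition is used only to make $\Loc(\Phi_0,h)$ compact and to convert uniform $\cA^h$ bounds into uniform area bounds, while the ``$-a>a$'' observation from \eqref{eqn:intro} is already baked into the cutting argument inside Theorem~\ref{thm:localized_compact} rather than appearing here.
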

\begin{proof}
    Let $\{N_i\}_i$ be a nested compact exhaustion of $M$ with smooth boundary satisfying $\Loc(\Phi_0, h) \subset N_1$.

    Fix any $i$ (henceforth we drop the subscript). We follow the constructions of \S\ref{subsec:compact_subdomains} for $N$, which produces a boundary decomposition $P_{\pm}$.

    We modify $h$ to $\tilde{h}$ near $\partial N$ so that $\tilde{h}\mid_{P_+} > H_{P_+, M \setminus N}$, $-\tilde{h}\mid_{P_-} > H_{P_-, M \setminus N}$, and $\int_{\tilde{N}} |h - \tilde{h}|$ is arbitrarily small.

    Then $\partial N$ is a barrier for $(\tilde{h}, P_+, P_-)$ and $\mathbf{L}^{\tilde{h}}(\Pi(N)) > \sup_{z\in Z} \cA^{\tilde{h}}(\Phi_0(z))$.

    We can now apply Theorem \ref{thm:localized_compact}, producing a smooth closed almost embedded $\tilde{h}$-PMC hypersurface $\Sigma$ in $\interior(N)$ satisfying the conclusions of Theorem \ref{thm:localized_compact}\footnote{The difference in the area bounds is simply a reflection of the fact that the definition of $\cA^h$ here differs from the definition of $\cA^h$ in Part \ref{part:compact} by a fixed constant.}.

    The theorem follows by taking the limit over the compact exhaustion and applying the compactness theory from \cite[Theorem 2.8]{zhou_mult}.
\end{proof}

\section{Application I: PMC hypersurfaces in Euclidean space}\label{sec:app1}

We use the theory developed in Theorem \ref{thm:basic_min-max} (and Corollary \ref{cor:basic_min-max}) to provide new answers to Question \ref{quest:PMCrn}.

\begin{theorem}\label{thm:rn}
    If $h \in C^{\infty}_{\text{loc}}(\R^{n+1})$ for $3 \leq n+1 \leq 7$ satisfies
    \begin{enumerate}
        \item there is a compact set with smooth boundary $N \subset \R^{n+1}$ satisfying $h < H_{\partial N, N}$,
        \item there is an $\mathbf{F}$-continuous map $\Phi_0 : [0, 1] \to \mathcal{C}_{\varnothing}(\R^{n+1})$ satisfying $\Phi_0(t) \subset N$ for all $t$ and
        \[ \mathbf{L}^h(\Pi_{\Phi_0}(N)) > \max\{\cA^h(\Phi_0(0)),\ \cA^h(\Phi_0(1))\}. \]
    \end{enumerate}
    then there exists a smooth closed almost embedded $h$-PMC hypersurface contained in $\interior(N)$ with $\cA^h$-index at most 1.
\end{theorem}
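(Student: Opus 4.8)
The plan is to observe that hypothesis (1) is precisely the statement that $\partial N$ is a good barrier for $\cA^h$, so the problem reduces directly to the compact min-max theory of Part \ref{part:compact}; concretely, I would apply Corollary \ref{cor:basic_min-max} with $X = [0,1]$, $Z = \{0,1\}$, and $k = 1$.

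First I would fix the ambient closed manifold and boundary decomposition. Since $N$ is a compact domain in $\R^{n+1}$, it lies inside a large flat coordinate cube, which embeds isometrically into a flat torus $(\tilde N,\tilde g)$; this serves as the closed manifold containing $(N,\partial N,g)$ required in \S\ref{subsec:subsets}. I would then take the trivial decomposition $E_+ = \varnothing$, $E_- = \tilde N\setminus N$, so $P_+ = \varnothing$ and $P_- = \partial N$. With this choice $\cCEE(N)$ is just the space of finite perimeter subsets of $\tilde N$ contained in $N$, which is naturally $\mathbf F$-homeomorphic to the subspace of $\mathcal C_\varnothing(\R^{n+1})$ of bounded finite perimeter sets contained in $N$; since $\Omega_0 = \varnothing$ and $E_+ = \varnothing$, the functional $\cA^h$ of \S\ref{subsec:noncompact_functional} coincides on the nose with that of \S\ref{subsec:prescription}. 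Hence $\Phi_0$, its homotopy class $\Pi_{\Phi_0}(N)$, and its $h$-width from hypothesis (2) are exactly a min-max datum in the sense of Part \ref{part:compact}.

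Next I would verify the barrier condition for $(h,P_+,P_-)$. The $P_+$ inequality is vacuous. For $P_-$: by the conventions of \S\ref{subsec:prescription}, $H_{P_-,E_-}$ is the mean curvature of $\partial N$ with respect to the unit normal pointing out of $E_- = \tilde N\setminus N$ --- that is, the normal pointing \emph{into} $N$ --- so $H_{P_-,E_-} = -H_{\partial N,N}$. Thus hypothesis (1), i.e.\ $h|_{\partial N} < H_{\partial N,N}$, is equivalent to $H_{P_-,E_-} < -h|_{P_-}$, which is precisely the barrier inequality. Then hypothesis (2) reads $\mathbf L^h(\Pi_{\Phi_0}(N)) > \max\{\cA^h(\Phi_0(0)),\cA^h(\Phi_0(1))\} = \sup_{z\in Z}\cA^h(\Phi_0(z))$, so Corollary \ref{cor:basic_min-max} applies with $k = 1$ and produces a smooth, closed, two-sided, almost embedded $h$-PMC hypersurface $\Sigma\subset\interior(N)$ with $\mathrm{index}_{\cA^h}(\Sigma)\le 1$, which is the desired hypersurface. (I note that the noncompact theory of Part \ref{part:noncompact} is neither available --- $h$ is assumed to satisfy the barrier bound only on the compact set $N$, with no sign condition at infinity --- nor needed, since the hypersurface is found inside the compact region $N$.)

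I do not expect a serious obstacle: Theorem \ref{thm:rn} is essentially a repackaging of Corollary \ref{cor:basic_min-max}. The one point requiring care is the orientation bookkeeping in the barrier check --- that the \emph{one-sided} bound $h < H_{\partial N,N}$ becomes the \emph{one-sided} barrier condition on $P_-$ alone, with the sign reversed by passing from $N$ to its complement, which is exactly the setting Part \ref{part:compact} was built to handle. One should also confirm the routine claim that the isometric embedding $N\hookrightarrow\tilde N$ identifies the relevant spaces of finite perimeter sets with their $\mathbf F$-topologies and the functional $\cA^h$, but this is immediate since $N$ sits in the interior of its isometric copy.
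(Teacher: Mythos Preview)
Your proposal is correct and takes essentially the same approach as the paper: set $P_+=\varnothing$, $P_-=\partial N$, verify the barrier condition, and invoke the compact min-max theory of Part~\ref{part:compact}. The paper's one-line proof cites Theorem~\ref{thm:localized_compact} together with Corollary~\ref{cor:basic_min-max}, while you invoke Corollary~\ref{cor:basic_min-max} alone; either suffices since the localization conclusion of Theorem~\ref{thm:localized_compact} is not needed here, and your write-up simply fills in the barrier-orientation and ambient-closed-manifold details that the paper leaves implicit.
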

\begin{remark}
    We emphasize that condition (1) is only a 1-sided bounded; $h$ is allowed to be arbitrarily negative.
\end{remark}
\begin{proof}
    Apply Theorem \ref{thm:localized_compact} (and Corollary \ref{cor:basic_min-max} for the index bound) on $N$ with $P_+ = \varnothing$ and $P_- = \partial N$.
\end{proof}

\begin{corollary}
    If $h \in C^{\infty}_{\text{loc}}(\R^{n+1})$ for $3 \leq n+1 \leq 7$ satisfies
    \begin{enumerate}
        \item there exists a nonempty bounded set of finite perimeter $\Lambda$ satisfying $\cA^h(\Lambda) \leq 0$,
        \item $h(x) \leq C|x|^{-1-\alpha}$ for some $\alpha,\ C > 0$,
    \end{enumerate}
    then there exists a smooth closed almost embedded $h$-PMC hypersurface in $\R^{n+1}$ with $\cA^h$-index at most 1.
\end{corollary}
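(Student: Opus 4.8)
The plan is to deduce the corollary from Theorem~\ref{thm:rn} by taking $N$ to be a sufficiently large ball and building a one-parameter mountain-pass sweepout of bounded sets inside $N$ that passes through $\Lambda$. Since $\Lambda$ is bounded and nonempty, I would first fix $R > 0$ with $\Lambda \subset B_R(0)$ and large enough that $C R^{-\alpha} < n$, and set $N = \overline{B_R(0)}$, a compact domain with smooth boundary. On $\partial N$ one has $|x| = R$, so hypothesis~(2) gives $h(x) \le C R^{-1-\alpha} < n/R = H_{\partial N, N}(x)$ in the sign convention of \S\ref{sec:conventions} (outward normal, $H_{\partial B_1(0)} = n$). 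This verifies condition~(1) of Theorem~\ref{thm:rn}.

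For the sweepout I would take $\Phi_0 \colon [0,1] \to \mathcal{C}_{\varnothing}(\R^{n+1})$ given by the dilations $\Phi_0(t) = t\Lambda$. This map is $\mathbf{F}$-continuous on $(0,1]$ since dilation is a smooth family of diffeomorphisms, and it is continuous at $t = 0$ because $\mathbf{M}([\partial^*(t\Lambda)]) = t^n \mathbf{M}([\partial^*\Lambda]) \to 0$; moreover $\Phi_0(0) = \varnothing$, $\Phi_0(1) = \Lambda$, and $\Phi_0(t) \subset B_R(0) = \interior(N)$ for all $t$. Its endpoint values are $\cA^h(\varnothing) = 0$ and $\cA^h(\Lambda) \le 0$ by hypothesis~(1), so $\max\{\cA^h(\Phi_0(0)), \cA^h(\Phi_0(1))\} = 0$, and condition~(2) of Theorem~\ref{thm:rn} reduces to showing that the width $\mathbf{L}^h(\Pi_{\Phi_0}(N))$ is strictly positive.

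The positivity of the width is the step that needs genuine argument. Let $\{\Phi_i\} \in \Pi_{\Phi_0}(N)$ be arbitrary; by the definition of the homotopy class, $\Phi_i(0) \to \varnothing$ and $\Phi_i(1) \to \Lambda$ in $\mathbf{F}$, hence (by \eqref{eqn:metrics} and Lemma~\ref{lem:flat_dominates_L1}, after isometrically embedding $N$ in the closed manifold $\tilde N$) in $L^1$, so $\cH^{n+1}(\Phi_i(0)) \to 0$ and $\cH^{n+1}(\Phi_i(1)) \to \cH^{n+1}(\Lambda) > 0$, the latter being positive since $\Lambda$ is nonempty as a finite perimeter set. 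Since each $\Phi_i$ takes values among subsets of $N$ and $t \mapsto \cH^{n+1}(\Phi_i(t))$ is continuous (again by $\mathbf{F}$-continuity and Lemma~\ref{lem:flat_dominates_L1}), for $i$ large the intermediate value theorem yields $t_i$ with $\cH^{n+1}(\Phi_i(t_i)) = v_0$, where $v_0 \in (0, \cH^{n+1}(\Lambda))$ is fixed. With $\iota_{n+1} > 0$ the Euclidean isoperimetric constant and $c_N = \sup_N |h|$, I would then estimate
\[
\cA^h(\Phi_i(t_i)) = \cH^n(\partial^*\Phi_i(t_i)) - \int_{\Phi_i(t_i)} h \ge \iota_{n+1}\, v_0^{n/(n+1)} - c_N v_0 ,
\]
and since $v_0^{n/(n+1)}/v_0 \to \infty$ as $v_0 \to 0^+$, choosing $v_0$ small makes the right-hand side a positive constant $\delta$, independent of $i$. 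Hence $\sup_t \cA^h(\Phi_i(t)) \ge \delta$ for all large $i$, so $\mathbf{L}^h(\Pi_{\Phi_0}(N)) \ge \delta > 0$.

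With both hypotheses of Theorem~\ref{thm:rn} verified, applying that theorem with $k = \dim[0,1] = 1$ produces a smooth closed almost embedded $h$-PMC hypersurface in $\interior(N) \subset \R^{n+1}$ with $\cA^h$-index at most $1$, which is exactly the assertion. The main obstacle is the width lower bound: one must be careful that $\mathbf{F}$-continuity genuinely transfers to continuity of the volume functional along sweepouts (which is why the whole construction is confined to the fixed ball $N$) and that the homotopy-class constraint really controls the endpoint volumes in the limit $i \to \infty$; the remaining steps are routine verifications.
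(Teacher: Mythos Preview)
Your proof is correct and follows essentially the same approach as the paper: choose $N=\overline{B_R(0)}$ for $R$ large, build a sweepout from $\varnothing$ to $\Lambda$ inside $N$, and use the Euclidean isoperimetric inequality together with the boundedness of $h$ on $N$ to force positive width. The only cosmetic difference is that the paper uses the sweepout $\Phi_0(t)=\Lambda\cap B_{Rt}(0)$ rather than your dilations $\Phi_0(t)=t\Lambda$, and states the width lower bound in one line where you spell out the intermediate-value argument.
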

\begin{proof}
    Choose $R$ so large that $\Lambda \subset B_R(0)$ and $h\mid_{\partial B_R(0)} \leq CR^{-1-\alpha} < n/R$. Set $N = \overline{B_R(0)}$, and note that $H_{\partial N, N} = n/R > h$.
    
    Let $\Phi_0(t) = \Lambda \cap B_{Rt}(0)$. Then
    \[ \max\{\cA^h(\Phi_0(0)), \cA^h(\Phi_0(1))\} = \max\{\cA^h(\varnothing), \cA^h(\Lambda)\} = 0. \]
    By the Euclidean isoperimetric inequality (and the fact that $h$ is bounded on $N$), we have $\mathbf{L}^h(\Phi_0(N)) > 0$.

    Hence, the conclusion follows from Theorem \ref{thm:rn}.
\end{proof}

\section{Application II: CMC hypersurfaces in finite volume}\label{sec:app2}

We use the theory developed in Theorem \ref{thm:localized_noncompact} to provide new answers to Question \ref{quest:CMCfv}.

\subsection{Isoperimetric profile}
Let $(M, g)$ be a complete finite volume manifold. 

\begin{definition}[Isoperimetric profile]
    For $v \in (0, \mathrm{Vol}(M))$, we define
    \[ \mathscr{I}(v) = \inf\{\cH^n(\partial^*\Omega) \mid \Omega \in \mathcal{C}(M),\ \cH^{n+1}(\Omega) = v\}. \]
\end{definition}
Since we assume $(M, g)$ has finite volume, $\mathscr{I}(v) > 0$ for all $v \in (0, \mathrm{Vol}(M))$ and the infimum is achieved by a finite perimeter set (by \cite[Theorem 2.2]{NR}), and $\mathscr{I}$ is continuous (by \cite[Corollary 2.3]{NR}).

\begin{definition}
    We define
    \[ \mathscr{D}(M, g) := \mathrm{Lip}(\mathscr{I}) = \sup_{v_1 \neq v_2} \left|\frac{\mathscr{I}(v_2) - \mathscr{I}(v_1)}{v_2-v_1}\right|. \]
\end{definition}

\begin{remark}
    Since $\mathscr{I}(v) > 0$ for finite volume manifolds, $\mathscr{D}(M, g) > 0$.
\end{remark}

\subsection{Existence of CMC hypersurfaces}

We now prove the main result of the subsection.

\begin{theorem}\label{thm:cmc}
    Let $(M, g)$ be a complete finite volume manifold of dimension $3 \leq n+1 \leq 7$. If $0 < c < \mathscr{D}(M, g)$, then there is a smooth complete almost embedded $c$-CMC hypersurface in $M$ with $\cA^c$-index at most 1.
\end{theorem}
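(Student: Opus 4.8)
The plan is to apply the noncompact localized min-max theorem (Theorem \ref{thm:localized_noncompact}) with $\Omega_0 = \varnothing$ and $h \equiv c$, so the only thing to check is the existence of a sweepout whose $c$-width exceeds the values at the endpoints $Z = \partial X$. Since $c > 0$ is constant, it is nonpositive outside a compact set only vacuously, so one must be slightly careful: $h\equiv c$ does \emph{not} lie in $\mathcal{W}_{\varnothing}$ directly because $c > 0$ on all of $M \setminus \Omega_0 = M$. However, $M$ has finite volume, so we can take $\Omega_0$ to be any fixed finite perimeter set and instead work with $\mathcal{C}_{\Omega_0}(M)$; the natural choice is $\Omega_0 = \varnothing$ and to replace the condition ``$h \leq 0$ outside a compact set'' by noting that $\mathcal{C}_{\varnothing}(M) = \mathcal{C}_{\mathrm{bdd}}(M)$ and, since $\mathrm{Vol}(M) < \infty$, uniform $\cA^c$-bounds still give uniform area bounds directly (the term $\int_\Omega c$ is bounded by $c\,\mathrm{Vol}(M)$). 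So the hypothesis $h \in \mathcal{W}_{\Omega_0}$ should be read as automatically satisfied here with $K_h$ replaced by all of $M$ in the finite-volume case. I will phrase the argument so that it only invokes the conclusion of Theorem \ref{thm:localized_noncompact}, which produces a smooth complete almost embedded $h$-PMC hypersurface once a sweepout with a gap is exhibited.

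\textbf{Constructing the sweepout.} First I would fix a smooth exhaustion function or, more efficiently, use the isoperimetric regions directly: by \cite[Theorem 2.2]{NR} and the continuity of $\mathscr{I}$, for each $v \in (0, \mathrm{Vol}(M))$ pick an isoperimetric region $\Omega_v$ with $\cH^{n+1}(\Omega_v) = v$ and $\cH^n(\partial^*\Omega_v) = \mathscr{I}(v)$. These need not vary continuously in $v$, so instead I would build a continuous one-parameter family of bounded finite perimeter sets $\Phi_0 : [0,1] \to \mathcal{C}_{\varnothing}(M)$ with $\Phi_0(0) = \varnothing$, $\Phi_0(1) = $ (some set of volume close to $\mathrm{Vol}(M)$, or $M$ itself truncated), interpolating by intersecting with metric balls $B_r(p)$ and letting $r$ range over $[0,\infty]$, reparametrized to $[0,1]$ — this is the standard ``filling by balls'' sweepout, continuous in the $\mathbf{F}$-topology. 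The key point is to estimate the $c$-width $\mathbf{L}^c(\Pi_{\Phi_0})$ from below and $\cA^c(\Phi_0(0)), \cA^c(\Phi_0(1))$ from above. We have $\cA^c(\varnothing) = 0$, and by choosing the endpoint $\Phi_0(1)$ to be a set $\Omega_1$ whose complement has small volume $\epsilon$ and small perimeter (possible since $\mathscr{I}(\mathrm{Vol}(M) - \epsilon) \to 0$ as $\epsilon \to 0$ by continuity with $\mathscr{I}(0)=0$... actually $\mathscr I$ need not vanish at the top endpoint, so instead use $\cA^c(\Omega_1) = \cH^n(\partial^*\Omega_1) - c\,v_1$), we get $\cA^c(\Omega_1) \leq \mathscr{I}(v_1) - c v_1$.

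\textbf{The width lower bound is the main obstacle.} The heart of the matter is showing $\mathbf{L}^c(\Pi_{\Phi_0}) > \max\{\cA^c(\Phi_0(0)), \cA^c(\Phi_0(1))\}$, and this is exactly where $\mathscr{D}(M,g)$ enters. For any competitor family $\{\Phi_i\}$ homotopic to $\Phi_0$ rel endpoints, a continuity/intermediate-value argument on the volume function $t \mapsto \cH^{n+1}(\Phi_i(t))$ shows that for each target volume $v$ lying between the endpoint volumes, some slice $\Phi_i(t)$ has volume exactly $v$, hence perimeter $\geq \mathscr{I}(v)$, hence $\cA^c(\Phi_i(t)) \geq \mathscr{I}(v) - cv$. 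Taking the supremum over admissible $v$ gives
\[
    \mathbf{L}^c(\Pi_{\Phi_0}) \;\geq\; \sup_{v} \big(\mathscr{I}(v) - cv\big).
\]
Now I would choose the endpoints so that the endpoint values are $\mathscr{I}(v_0) - c v_0$ and $\mathscr{I}(v_1) - c v_1$ with $v_0 < v_1$, and the claim reduces to: since $c < \mathscr{D}(M,g) = \mathrm{Lip}(\mathscr{I})$, there exist $v_0 < v_1$ with $\big|\tfrac{\mathscr{I}(v_1) - \mathscr{I}(v_0)}{v_1 - v_0}\big| > c$, and hence the function $v \mapsto \mathscr{I}(v) - cv$ is not monotone on $[v_0, v_1]$ — so its max on $[v_0,v_1]$ strictly exceeds its values at both endpoints (handling the two sign cases of the difference quotient symmetrically, possibly relabeling so that $\mathscr{I}(v_1) - \mathscr I(v_0) > c(v_1 - v_0)$, in which case $\mathscr I(v) - cv$ strictly increases somewhere, so $\sup > \mathscr I(v_0) - cv_0$; the other sign gives $\sup > \mathscr I(v_1) - cv_1$; by extending the family slightly past whichever endpoint is needed, or by choosing $v_0, v_1$ as a genuine interior local max/min pair, one gets a strict gap over \emph{both} endpoints). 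This gives the strict inequality, and then Theorem \ref{thm:localized_noncompact} produces the desired $c$-CMC hypersurface, which is complete, finite area (by the area bound $\cH^n(\Sigma) \leq \sup_x \cA^c(\Phi_0(x)) + c\,\mathrm{Vol}(M) < \infty$, using finiteness of the volume in place of the $K_h$ term), almost embedded, and of $\cA^c$-index at most $1$ since $X = [0,1]$ has dimension $k=1$. The delicate point to get right in the write-up is the bookkeeping that makes the width gap \emph{strict} over both endpoints simultaneously; this is where one genuinely needs $\mathrm{Lip}(\mathscr I) > c$ rather than merely $\sup \mathscr I' \geq c$, and it is cleanest to pick $v_0 < v_1$ realizing a difference quotient of absolute value $> c$, then locate an interior point of $[v_0,v_1]$ where $\mathscr I(v) - cv$ strictly beats both endpoint values, and truncate/reparametrize $\Phi_0$ to have volumes ranging over a slightly larger interval than $[v_0, v_1]$.
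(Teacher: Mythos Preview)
Your choice $h \equiv c > 0$ with $\Omega_0 = \varnothing$ is the wrong sign, and the problem is deeper than the area bound you identified. The condition $h \in \mathcal{W}_{\Omega_0}$ serves two purposes in Theorem~\ref{thm:localized_noncompact}: it gives uniform area bounds, \emph{and} it forces $\Loc(\Phi_0, h)$ to be compact. With $h \equiv c > 0$ you have $\overline{\{h > 0\}} = M$, so $\Loc(\Phi_0, c) \supset \overline{I_-(\Phi_0)}$, which contains the unbounded complement of your bounded sweepout. Compactness of the localizer is exactly what anchors the hypersurfaces $\Sigma_i$ built in the exhausting domains $N_i$ and prevents escape to infinity in the limit; without it the entire scheme behind Theorem~\ref{thm:localized_noncompact} collapses, since the cutting argument in Theorem~\ref{thm:localized_compact} only pins $\Sigma_i$ to a localizer that now grows with $i$. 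Finite volume does not rescue this.

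The paper instead takes $h = -c$. Then $h \leq 0$ everywhere, so $h \in \mathcal{W}_\varnothing$ trivially, and $\Loc(\Phi_0, -c) = \Span(\Phi_0) \cup \overline{I_+(\Phi_0)}$ is compact once the sweepout has uniformly bounded images. A $(-c)$-PMC hypersurface is a $c$-CMC hypersurface after flipping the normal, and since $\partial_\nu h = 0$ for constant $h$ the second variation $Q_\Sigma^h$ and hence the index are unchanged. The paper then uses the symmetry $\mathscr{I}(v) = \mathscr{I}(\mathrm{Vol}(M)-v)$ to arrange $\frac{\mathscr{I}(v_1)-\mathscr{I}(v_2)}{v_2-v_1} > c$ with $v_1 < v_2$, takes $\varnothing$ (the global minimizer of $\cA^{-c}$) as one endpoint, and for the other endpoint approximates an isoperimetric region of volume $v_2$ by a \emph{bounded} set via Lemma~\ref{lem:approx_by_cpt}. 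The width is bounded below by $\mathscr{I}(v_1)+cv_1$ through the intermediate-value argument on volume you described, and the strict gap over both endpoints falls out directly---sidestepping the ``both endpoints simultaneously'' bookkeeping you flagged as delicate.
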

\begin{proof}
    Since $\mathscr{I}$ is symmetric under $v \mapsto \mathrm{Vol}(M) - v$, we can find $v_1 < v_2$ so that
    \[ \frac{\mathscr{I}(v_1) - \mathscr{I}(v_2)}{v_2 - v_1} > c. \]
    By Lemma \ref{lem:approx_by_cpt} below, we can find $\Omega_1 \in \mathcal{C}_{\varnothing}(M)$ with $\cH^{n+1}(\Omega_1)$ approximately $v_2$ and
    \begin{equation}\label{eqn:cmc_application}
        \frac{\mathscr{I}(v_1) - \cH^n(\partial^* \Omega_1)}{\cH^{n+1}(\Omega_1) - v_1} \geq c + \delta
    \end{equation}
    for some $\delta > 0$ small.

    Let $\Phi_0 : [0, 1] \to \mathcal{C}_{\varnothing}(M)$ be any $\mathbf{F}$-continuous path from $\varnothing$ to $\Omega_1$. Take $h = -c$. Then $\varnothing$ is the unique minimizer of $\cA^h$. For any $\Omega \in \mathcal{C}_{\varnothing}(M)$ with $\cH^{n+1}(\Omega) = v_1$, we have by \eqref{eqn:cmc_application}
    \begin{align*}
        \cA^h(\Omega)
        & = \cH^n(\partial^*\Omega) + c\cH^{n+1}(\Omega)\\
        & \geq \mathscr{I}(v_1) + cv_1\\
        & \geq \cH^n(\partial^* \Omega_1) + c\cH^{n+1}(\Omega_1) + \frac{\delta}{2}(v_2 - v_1)\\
        & = \cA^h(\Omega_1) + \frac{\delta}{2}(v_2 - v_1).
    \end{align*}
    Hence, we have
    \[ \mathbf{L}^h(\Pi_{\Phi_0}) > \max\{\cA^h(\Phi_0(0)),\ \cA^h(\Phi_0(1))\}. \]
    The theorem now follows from Theorem \ref{thm:localized_noncompact}.
\end{proof}

\begin{lemma}\label{lem:approx_by_cpt}
    For any $v \in (0, \mathrm{Vol}(M))$ and any $\eps > 0$, there is a bounded set $\Omega \in \mathcal{C}(M)$ satisfying
    \[ |\cH^{n+1}(\Omega) - v| < \eps\ \ \text{and}\ \ |\mathscr{I}(v) - \cH^n(\partial^*\Omega)| < \eps. \]
\end{lemma}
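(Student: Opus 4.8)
The plan is to realize the target set $\Omega$ by first taking an isoperimetric region $\Omega_v$ of volume $v$ (which exists by \cite[Theorem 2.2]{NR}) and then truncating it by a large metric ball so that the piece thrown away changes the volume and the perimeter only slightly. Concretely, let $\Omega_v \in \mathcal{C}(M)$ satisfy $\cH^{n+1}(\Omega_v) = v$ and $\cH^n(\partial^*\Omega_v) = \mathscr{I}(v)$. Since $\cH^{n+1}$ and $\|\partial^*\Omega_v\|$ are finite Radon measures and $M$ is exhausted by the metric balls $B_R(p)$ for a fixed basepoint $p$, we have $\cH^{n+1}(\Omega_v \setminus B_R(p)) \to 0$ and $\cH^n(\partial^*\Omega_v \setminus B_R(p)) \to 0$ as $R \to \infty$.

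The only real issue is that cutting with $\partial B_R(p)$ introduces a new piece of boundary, namely (a subset of) $\Omega_v \cap \partial B_R(p)$, whose measure is not directly controlled by the two smallness facts above. This is the step I expect to be the main obstacle. I would handle it with a coarea/Sard-type argument: for a.e.\ $R$ the set $\Omega_v \cap \partial B_R(p)$ is $\cH^n$-rectifiable with finite measure, and by the coarea formula
\[
\int_0^\infty \cH^n(\Omega_v \cap \partial B_R(p))\, dR \leq \cH^{n+1}(\Omega_v) < \infty,
\]
using that the distance function is $1$-Lipschitz (so its gradient has norm at most $1$ a.e.). Hence $\liminf_{R\to\infty}\cH^n(\Omega_v \cap \partial B_R(p)) = 0$, and we may choose a sequence $R_j \to \infty$ along which this boundary contribution tends to $0$ and along which $B_{R_j}(p) \cap \Omega_v$ is a set of finite perimeter (true for a.e.\ $R$).

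Putting these together, set $\Omega := \Omega_v \cap B_{R_j}(p)$ for $j$ large. Then $\Omega$ is bounded, $\cH^{n+1}(\Omega) \to v$, and
\[
\cH^n(\partial^*\Omega) \leq \cH^n(\partial^*\Omega_v \cap B_{R_j}(p)) + \cH^n(\Omega_v \cap \partial B_{R_j}(p)) \leq \mathscr{I}(v) + o(1),
\]
giving $\cH^n(\partial^*\Omega) \leq \mathscr{I}(v) + \eps$ for $j$ large; combined with the lower bound $\cH^n(\partial^*\Omega) \geq \mathscr{I}(\cH^{n+1}(\Omega))$ and the continuity of $\mathscr{I}$ (\cite[Corollary 2.3]{NR}) at $v$, we also get $\cH^n(\partial^*\Omega) \geq \mathscr{I}(v) - \eps$, so $|\mathscr{I}(v) - \cH^n(\partial^*\Omega)| < \eps$. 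Choosing $j$ large enough that simultaneously $|\cH^{n+1}(\Omega) - v| < \eps$ finishes the proof. (Two minor technical points to verify in passing: that $\partial^*(\Omega_v \cap B_{R_j}(p))$ decomposes as claimed up to $\cH^n$-null sets, which is the standard formula for the reduced boundary of an intersection of finite-perimeter sets with $\cH^n$-a.e.\ transverse boundaries, and that the chosen $R_j$ can be taken to avoid the $\cH^n$-null bad set where this fails.)
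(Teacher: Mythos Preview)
Your argument is correct and follows essentially the same truncation idea as the paper: pick a near-optimal set for $\mathscr{I}(v)$, intersect with a large compact set, and check that volume and perimeter change little. The only real difference is in how the truncating set is chosen. The paper uses the finite volume of $M$ to pick a fixed exhaustion $\{K_i\}$ by smooth compact domains with $\cH^n(\partial K_i)\to 0$, so the added boundary $\partial K_i$ contributes negligibly regardless of $\Omega$; you instead use the coarea formula on $\Omega_v$ to find good radii $R_j$ with $\cH^n(\Omega_v\cap\partial B_{R_j}(p))\to 0$. Your route is slightly more self-contained (it uses only finiteness of $\cH^{n+1}(\Omega_v)$, not of $\mathrm{Vol}(M)$, for the truncation step), at the cost of the mild technical check on the reduced-boundary decomposition for intersections that you correctly flag. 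Your use of continuity of $\mathscr{I}$ for the lower bound is equivalent to the paper's direct two-sided limit computation.
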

\begin{proof}
    Since $M$ has finite volume, there is $\{K_i\}_i$ a nested exhaustion by compact sets with smooth boundary satisfying $\limsup_{i \to \infty} \cH^n(\partial K_i) = 0$.
    
    Let $\Omega \in \mathcal{C}(M)$ satisfy $\cH^{n+1}(\Omega) = v$ and $\cH^n(\partial^*\Omega) \leq \mathscr{I}(v) + \eps/2$. Let $\Omega_i = \Omega \cap K_i$. Then
    \[ \limsup_{i \to \infty} \cH^n(\partial^*\Omega_i) \leq  \cH^n(\partial^*\Omega) + \limsup_{i \to \infty} \cH^n(\partial K_i) = \cH^n(\partial^*\Omega), \]
    \[ \liminf_{i \to \infty} \cH^n(\partial^*\Omega_i) \geq \liminf_{i \to \infty} \cH^n((\partial^*\Omega) \cap K_i) = \cH^n(\partial^*\Omega). \]
    Moreover,
    \[ \limsup_{i \to \infty} \cH^{n+1}(\Omega_i) \leq \cH^{n+1}(\Omega), \]
    \[ \liminf_{i \to \infty} \cH^{n+1}(\Omega_i) \geq \cH^{n+1}(\Omega) - \limsup_{i \to \infty}\cH^{n+1}(M \setminus K_i) = \cH^{n+1}(\Omega). \]
    Hence, we can take $\Omega_i$ for $i$ sufficiently large (depending on $\eps$).
\end{proof}

\subsection{Lower bounds for Lipschitz constant of isoperimetric profile} In the rest of the section, we develop tools to compute lower bounds for the invariant $\mathscr{D}$. We conclude the subsection with a few examples.

\begin{proposition}\label{prop:dlowbound}
    Let $\delta \in (0, \mathrm{Vol}(M))$. Suppose there are locally smooth functions
    \[ \eta_1 : (0, \delta) \to (0, +\infty),\ \ \eta_2 : (0, \delta) \to (0, +\infty) \]
    satisfying $\lim_{t \to 0} \eta_j(t) = 0$ and $\mathscr{I}(v) \geq \min\{\eta_1(v), \eta_2(v)\}$ for all $v \in (0, \delta)$. Then
    \[ \mathscr{D}(M, g) \geq \min\left\{\liminf_{t \to 0} \eta_1'(t),\ \liminf_{t\to 0} \eta_2'(t)\right\}. \]
\end{proposition}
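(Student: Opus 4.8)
The plan is to reduce the claimed bound on $\mathscr{D}(M,g)=\mathrm{Lip}(\mathscr{I})$ to two elementary facts: that the hypothesis forces $\mathscr{I}$ to grow at least linearly near $v=0$, and that $\mathscr{I}(v)\to 0$ as $v\to 0^+$. Write $L=\min\{\liminf_{t\to 0}\eta_1'(t),\ \liminf_{t\to 0}\eta_2'(t)\}$. If $L\le 0$ there is nothing to prove, since $\mathscr{D}(M,g)>0$ for finite volume manifolds (the Remark after the definition of $\mathscr{D}$). So I would fix an arbitrary real number $L'<L$ (allowing $L'$ arbitrarily large when $L=+\infty$) and aim to prove $\mathscr{D}(M,g)\ge L'$; letting $L'\uparrow L$ at the end gives the conclusion.

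First I would unwind the definition of $\liminf$. Since $\liminf_{t\to 0}\eta_j'(t)\ge L>L'$ for $j=1,2$, there is $\delta'\in(0,\delta)$ such that $\eta_j'(t)>L'$ for all $t\in(0,\delta')$ and $j=1,2$. Because each $\eta_j$ is smooth on $(0,\delta)$, the fundamental theorem of calculus gives $\eta_j(v)-\eta_j(s)=\int_s^v\eta_j'(t)\,dt\ge L'(v-s)$ for $0<s<v<\delta'$; letting $s\to 0^+$ and using the hypothesis $\eta_j(s)\to 0$ yields $\eta_j(v)\ge L'v$, hence
\[ \mathscr{I}(v)\ge \min\{\eta_1(v),\eta_2(v)\}\ge L'v \qquad \text{for all } v\in(0,\delta'). \]

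Next I would record the standard fact $\mathscr{I}(v)\to 0$ as $v\to 0^+$: fixing $p\in M$, the map $r\mapsto \cH^{n+1}(B_r(p))$ is continuous, strictly increasing near $0$, and tends to $0$, so for small $v$ one can choose $r(v)\to 0$ with $\cH^{n+1}(B_{r(v)}(p))=v$, whence $\mathscr{I}(v)\le \cH^n(\partial^* B_{r(v)}(p))\to 0$. I should state this explicitly, since $\mathscr{I}$ was only defined on the open interval $(0,\mathrm{Vol}(M))$. Finally, fixing $v_2\in(0,\delta')$ (so $\mathscr{I}(v_2)\ge L'v_2>0$) and testing the difference quotient in the definition of $\mathscr{D}$ against $v_1\to 0^+$,
\[ \mathscr{D}(M,g)\ge \left|\frac{\mathscr{I}(v_2)-\mathscr{I}(v_1)}{v_2-v_1}\right| \xrightarrow[v_1\to 0^+]{} \frac{\mathscr{I}(v_2)}{v_2}\ge L', \]
which is what was needed.

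The argument is soft, so I do not anticipate a substantial obstacle. The only points requiring care are the bookkeeping with $\liminf$ and the one-sided limits at $0$ — choosing $\delta'$ uniformly in $j$, checking that the boundary term $\eta_j(s)$ genuinely vanishes in the limit — and flagging the auxiliary fact $\mathscr{I}(0^+)=0$, which is immediate from small geodesic balls but must be invoked because it lies just outside the domain on which $\mathscr{I}$ was defined.
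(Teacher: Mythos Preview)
Your proof is correct and follows essentially the same approach as the paper: both fix an arbitrary threshold below the target ($L'<L$ for you, $m-\eps$ in the paper), use the fundamental theorem of calculus together with $\eta_j(0^+)=0$ to obtain the linear lower bound $\mathscr{I}(v)\ge L'v$ near $0$, and then test the Lipschitz difference quotient against $v_1\to 0$ using $\mathscr{I}(0^+)=0$. Your write-up is in fact slightly more careful than the paper's in justifying $\mathscr{I}(v)\to 0$ via small geodesic balls.
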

\begin{proof}
    Set $m = \min\left\{\liminf_{t \to 0} \eta_1'(t),\ \liminf_{t\to 0} \eta_2'(t)\right\}$ (unless the right hand side is $+\infty$, in which case we set $m = 1$ for convenience). If $m = 0$, then the conclusion is trivial, so we assume $m > 0$.

    Fix $\eps \in (0, m)$. We claim that
    \[ \min\{\eta_1(v), \eta_2(v)\} \geq (m-\eps)v. \]
    for all $v$ sufficiently small. Indeed, by the definition of $m$, there is some $\sigma > 0$ so that for all $t \in (0, \sigma)$, we have $\eta_j'(t) \geq m-\eps$ for $j = 1, 2$. By the fundamental theorem of calculus, we have $\eta_j(t) \geq (m-\eps)t$ for all $t \in (0, \sigma)$ for $j = 1, 2$, so the desired conclusion holds.

    Hence, we have $\mathscr{I}(v) \geq (m-\eps)v$ for any $v \in (0, \sigma)$. Let $v_2 \in (0, \sigma)$. Since $\lim_{v\to 0} \mathscr{I}(v) = 0$, there is a sequence of positive real numbers $\{v_1^i\}_i$ converging to $0$ with $\mathscr{I}(v_1^i) < \mathscr{I}(v_2)$. We have
    \begin{align*}
        \mathscr{D}(M, g)
        & \geq \liminf_{i \to \infty} \left|\frac{\mathscr{I}(v_2) - \mathscr{I}(v_1^i)}{v_2 - v_1^i}\right|\\
        & = \liminf_{i \to \infty}\frac{\mathscr{I}(v_2) - \mathscr{I}(v_1^i)}{v_2 - v_1^i}\\
        & \geq \liminf_{i \to \infty}\frac{\mathscr{I}(v_2) - \mathscr{I}(v_1^i)}{v_2}\\
        & \geq m-\eps.
    \end{align*}
    Since $\eps > 0$ was arbitrary, the conclusion follows.
\end{proof}

Suppose $E \subset M$ is the complement of a precompact open subset with smooth boundary in $M$. We define
\[ \mathscr{I}_E(v) = \inf\{\cH^n(\partial^*\Omega) \mid \Omega \in \mathcal{C}(M),\ \Omega \subset E,\ \cH^{n+1}(\Omega) = v\}. \]

\begin{proposition}\label{prop:noncompact_piece}
    Suppose $E \subset M$ is the complement of a precompact open subset with smooth boundary in $M$. Let $\delta \in (0, \cH^{n+1}(E))$. Suppose there is a locally smooth function
    \[ \eta : (0, \delta) \to (0, +\infty) \]
    satisfying $\lim_{t\to 0} \eta(t) = 0$ and $\mathscr{I}_E(v) \geq \eta(v)$ for all $v \in (0, \delta)$. Then
    \[ \mathscr{D}(M, g) \geq \liminf_{t \to 0} \eta'(t). \]
\end{proposition}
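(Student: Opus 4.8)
The plan is to reduce the statement to the method already used in the proof of Proposition~\ref{prop:dlowbound}. Write $U := M \setminus E$, a precompact open set with smooth boundary, and set $m := \liminf_{t\to 0}\eta'(t)$; we may assume $m > 0$ (if $m\le 0$ the conclusion is trivial since $\mathscr{D}(M,g)>0$, and the case $m=+\infty$ follows by taking $m'$ below arbitrarily large). The key claim is: \emph{for every $m' \in (0,m)$ and every $\nu > 0$ there is $v_1 > 0$ with $\mathscr{I}(v) \geq (m'-\nu)v$ for all $v \in (0,v_1)$.} Granting this, the final paragraph of the proof of Proposition~\ref{prop:dlowbound} applies verbatim — using that $\mathscr{I}$ is continuous with $\mathscr{I}(v)\to 0$ as $v\to 0^+$ (small geodesic balls) — to give $\mathscr{D}(M,g)\geq \mathscr{I}(v_2)/v_2 \geq m'-\nu$ for a suitable small $v_2$, and then $\nu \to 0$, $m' \to m$ finishes. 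So the work is to bound $\mathscr{I}$ from below near $0$ in terms of $\eta$; the only subtlety is that $\mathscr{I}\le\mathscr{I}_E$ is the ``wrong direction'', so one must actually \emph{produce}, from a competitor of small volume, a competitor for $\mathscr{I}_E$ of comparable volume and only slightly larger area.

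To this end fix $\theta\in(0,1)$ small and, using that closed metric balls in the complete manifold $M$ are compact (Hopf--Rinow), choose a precompact open set $U' \subset M$ with smooth boundary, $\overline{U}\subset U'$, whose inradius relative to $U$, namely $\rho_0 := \mathrm{dist}(\overline{U},\partial U')$, is as large as we please (take $U'$ slightly smoothing a large metric ball around $\overline{U}$). Let $\Omega\in\mathcal{C}(M)$ with $\cH^{n+1}(\Omega)=v$ small, and put $v' := \cH^{n+1}(\Omega\cap U')$. If $v' \geq \theta v$ then, since $v'\le \tfrac12\cH^{n+1}(U')$ for $v$ small, the relative isoperimetric inequality on the compact manifold-with-boundary $\overline{U'}$ (a standard finite-cover-by-charts estimate) gives
\[ \cH^n(\partial^*\Omega) \;\geq\; \cH^n(\partial^*\Omega\cap U') \;=\; P(\Omega\cap U';U') \;\geq\; c_{U'}\,(\theta v)^{\frac{n}{n+1}} \]
for some $c_{U'}>0$ depending only on $U'$.

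If instead $v' < \theta v$, I would cut $\Omega$ along a level set of $x\mapsto \mathrm{dist}(x,\partial U')$ inside $U'$. By the coarea formula, for any $r_0 < \rho_0$,
\[ \int_0^{r_0} \cH^n\bigl(\Omega\cap U'\cap\{\mathrm{dist}(\cdot,\partial U')=s\}\bigr)\,ds \;\leq\; \cH^{n+1}(\Omega\cap U') \;=\; v' \;<\; \theta v, \]
so some generic level $s^\ast\in(0,r_0)$ has $\Gamma := U'\cap\{\mathrm{dist}(\cdot,\partial U')=s^\ast\}$ with $\cH^n(\Omega\cap\Gamma) < \theta v/r_0$, $\Gamma$ being $\cH^n$-rectifiable and $\partial^*V\subseteq\Gamma$ for $V := \{x\in U' : \mathrm{dist}(x,\partial U')>s^\ast\}$. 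Since $r_0<\rho_0$ we have $\overline{U}\subset V$, hence $\Omega\setminus V \subset E$, and $\cH^{n+1}(\Omega\setminus V) \geq v - v' > (1-\theta)v =: \tilde v$. Because $\partial^*(\Omega\setminus V)\subseteq(\partial^*\Omega\setminus\overline V)\cup(\Gamma\cap\Omega^{(1)})$ up to an $\cH^n$-null set for generic $s^\ast$, the hypothesis $\mathscr{I}_E\geq\eta$ yields
\[ \eta\bigl(\cH^{n+1}(\Omega\setminus V)\bigr) \;\leq\; \mathscr{I}_E\bigl(\cH^{n+1}(\Omega\setminus V)\bigr) \;\leq\; \cH^n(\partial^*(\Omega\setminus V)) \;\leq\; \cH^n(\partial^*\Omega) + \theta v/r_0. \]
Using $\eta(w)\geq m'w$ for $w$ small (from $\eta'\geq m'$ near $0$ and $\eta(0^+)=0$) and monotonicity considerations (one may shrink $\Omega\setminus V$ slightly to bring its volume into $(0,\delta)$ and down to $\tilde v$, at the cost of an arbitrarily small area change), we get $\cH^n(\partial^*\Omega)\geq m'(1-\theta)v - \theta v/r_0$. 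Choosing $\theta$ small, then $U'$ (hence $r_0$) large, makes $m'(1-\theta)-\theta/r_0 \geq m'-\nu$. Combining the two cases, $\mathscr{I}(v)\geq\min\{c_{U'}(\theta v)^{n/(n+1)},\,(m'-\nu)v\}$ for $v$ small, which equals $(m'-\nu)v$ once $v$ is small enough that the power term dominates — this is the claim.

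The main obstacle is precisely the cut-off error $\cH^n(\Gamma\cap\Omega^{(1)})$: chopping $\Omega$ at $\partial U'$ directly introduces a fixed error of size $\cH^n(\partial U')$, which overwhelms the $O(v)$ scale we need. The resolution above is to choose the cutting level by coarea \emph{inside} $U'$ (so the error is controlled by $\cH^{n+1}(\Omega\cap U') < \theta v$) and, crucially, to take $U'$ with enormous inradius so that the resulting $\theta v/r_0$ is negligible against $m'v$. Two routine points to check along the way: the relative isoperimetric constant $c_{U'}$ degrades as $U'$ grows, but it only needs to be positive for the single $U'$ attached to each fixed $\nu$; and the genericity of $s^\ast$ (rectifiability of $\Gamma$, $\partial^*V\subseteq\Gamma$, and agreement of the various sets up to $\cH^n$-null sets) is a standard Sard/coarea argument.
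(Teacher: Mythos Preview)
Your proof is correct and follows the same overall strategy as the paper: split according to how much volume of a near-minimizer $\Omega$ lies in a fixed compact region, apply a relative isoperimetric inequality when that volume is large, and otherwise use a coarea slicing argument to cut $\Omega$ down to a competitor for $\mathscr{I}_E$ with controlled extra boundary, then feed the resulting lower bound for $\mathscr{I}$ into Proposition~\ref{prop:dlowbound}. The one genuine difference is how the cutting error is made $o(v)$. You take the compact region $U'$ with enormous inradius $r_0$ and a linear threshold $\theta v$, so that the coarea error $\theta v/r_0$ is absorbed by sending $\theta\to 0$ and $r_0\to\infty$ at the very end; this costs you two extra limiting parameters and forces $c_{U'}$ to depend on $\nu$. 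The paper instead keeps the compact set fixed (a thin collar $E\cap K$ of $\partial E$ of width $\eps$) and uses the superlinear threshold $v^{(n+2)/(n+1)}$: then both the coarea error and the volume defect are automatically $o(v)$, yielding explicit functions $\eta_1(v)=C_Kv^{(n^2+2n)/(n+1)^2}$ and $\eta_2(v)=\eta(v-v^{(n+2)/(n+1)})-c_1v^{(n+2)/(n+1)}$ to which Proposition~\ref{prop:dlowbound} applies directly. The paper's trick is cleaner (one compact set, no extra limits), while yours is perhaps more elementary in spirit; both reach the same conclusion.
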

\begin{proof}
    If $\liminf_{t\to 0} \eta'(t) = 0$, then the conclusion is trivial, so we assume otherwise.
    
    Since $\partial E$ is smooth, compact, and separating, we can find a compact set $K \subset M$ with smooth boundary so that
    \begin{itemize}
        \item $E \cup K = M$,
        \item there is a diffeomorphism
        \[ \phi : \partial E \times [0, \eps] \to E \cap K \]
        satisfying $\phi(\partial E, t) = \{x \in E\mid d(x, \partial E) = t\}$.
    \end{itemize}
    Since $K$ is compact with smooth boundary, by \cite[Theorem 9.6]{iso} there are constants $C_K,\ v_K > 0$ so that for any $\Lambda \in \mathcal{C}(M)$ with $\cH^{n+1}(\Lambda \cap K) \leq v_K$, we have
    \begin{equation}\label{eqn:iso}
        \cH^n((\partial^*\Lambda) \cap K) \geq C_K(\cH^{n+1}(\Lambda \cap K))^{\frac{n}{n+1}}.
    \end{equation}
    For $t \in [0, \eps]$, we let $S_t = \phi(\partial E, t)$, $K_t = K \setminus \phi(\partial E, [t, \eps])$, and $E_t = E \setminus \phi(\partial E, [0, t])$.
    
    Fix $v \in (0, \delta)$. By \cite[Corollary 2.2]{NR} (since we assume $(M, g)$ has finite volume), we can find $\Omega \in \mathcal{C}(M)$ satisfying
    \[ \cH^{n+1}(\Omega) = v\ \ \text{and}\ \ \cH^n(\partial^*\Omega) = \mathscr{I}(v). \]

    Case 1: Suppose $\cH^{n+1}(\Omega \cap K) \geq v^{\frac{n+2}{n+1}}$. By \eqref{eqn:iso}, we have (for $v$ sufficiently small)
    \begin{equation}\label{eqn:case1}
        \mathscr{I}(v) = \cH^n(\partial^*\Omega) \geq \cH^n((\partial^*\Omega) \cap K) \geq C_Kv^{\frac{n+2}{n+1}\frac{n}{n+1}} = C_Kv^{\frac{n^2+2n}{n^2+2n+1}} =: \eta_1(v).
    \end{equation}

    Case 2: Suppose $\cH^{n+1}(\Omega \cap K) \leq v^{\frac{n+2}{n+1}}$. By the coarea formula in $E \cap K$, there is a constant $c_1 = c_1(E, K) > 0$ so that for some $t \in [0, \eps]$ (depending on $\Omega$), we have
    \begin{equation}\label{eqn:coarea}
        \cH^n(\Omega \cap S_t) \leq c_1v^{\frac{n+2}{n+1}}.
    \end{equation}
    By assumption and the definition of $\mathscr{I}_E$, we have
    \begin{equation}\label{eqn:inE}
        \cH^n((\partial^*\Omega) \cap E_t) + \cH^n(\Omega \cap S_t) = \cH^{n}(\partial^*(\Omega \cap E_t)) \geq \eta(\cH^{n+1}(\Omega \cap E_t)).
    \end{equation}
    Since $\liminf_{t \to 0}\eta'(t) > 0$, $\eta$ is increasing for $v$ sufficiently small. So for sufficiently small $v$, by \eqref{eqn:coarea} and \eqref{eqn:inE}, we have
    \begin{equation}\label{eqn:case2}
        \mathscr{I}(v) = \cH^n(\partial^*\Omega) \geq \cH^n((\partial^*\Omega) \cap E_t) \geq \eta(v - v^{\frac{n+2}{n+1}}) - c_1v^{\frac{n+2}{n+1}} =: \eta_2(v).
    \end{equation}

    Hence, for $v$ sufficiently small, we have
    \[ \mathscr{I}(v) \geq \min\{\eta_1(v), \eta_2(v)\}, \]
    where $\eta_1$ and $\eta_2$ are as defined in \eqref{eqn:case1} and \eqref{eqn:case2}.
    Since
    \[ \liminf_{t \to 0} \eta_1'(t) = +\infty \ \ \text{and}\ \  \liminf_{t \to 0} \eta_2'(t) = \liminf_{t \to 0} \eta'(t), \]
    the conclusion follows by Proposition \ref{prop:dlowbound}.
\end{proof}

We now compute lower bounds for $\mathscr{D}$ for a large class of finite volume manifolds with special structure outside a compact set.

\begin{proposition}\label{prop:compute}
    Let $(M^{n+1}, g)$ be a complete manifold with finite volume and dimension $3 \leq n+1 \leq 7$. Suppose there is a compact set $K \subset M$ so that $M \setminus K$ is isometric to
    \[ ((A, \infty) \times N,\ \psi^2(t)dt^2 \oplus \rho^2(t)g_N) \]
    for a fixed closed Riemannian manifold $(N^n, g_N)$ of dimension $n$ and locally smooth positive functions $\psi$ and $\rho$. If $\rho' < 0$ and $\frac{\rho'}{\rho\psi}$ is nonincreasing, then
    \[ \mathscr{D}(M, g) \geq -n\lim_{t \to \infty} \frac{\rho'(t)}{\rho(t)\psi(t)}. \]
\end{proposition}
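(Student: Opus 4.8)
The plan is to apply Proposition~\ref{prop:noncompact_piece} to the far part of the end of $M$, using for the comparison function a calibration built from the area form of the warped-product slices. Write $\kappa(t) = -\frac{\rho'(t)}{\rho(t)\psi(t)}$; since $\rho' < 0$ we have $\kappa > 0$, and since $\frac{\rho'}{\rho\psi}$ is nonincreasing, $\kappa$ is nondecreasing, so $\kappa(t) \nearrow L := -\lim_{t\to\infty}\frac{\rho'(t)}{\rho(t)\psi(t)} \in (0, +\infty]$ as $t\to\infty$. The goal is to show $\mathscr{D}(M,g) \geq nL$. For each $t_1 > A$, let $E_{t_1} \subset M$ be the closed region corresponding to $[t_1,\infty) \times N$ in the end; it is the complement in $M$ of the precompact open set $K \cup ((A,t_1)\times N)$, which has smooth boundary $\{t_1\}\times N$, so $E_{t_1}$ fits the framework of Proposition~\ref{prop:noncompact_piece}.

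On $E_{t_1}$, with coordinates $(t, p) \in [t_1, \infty) \times N$, let $\omega = \rho^n(t)\,\pi^*\mathrm{vol}_{g_N}$ be the area $n$-form of the slices $\{t\}\times N$, where $\pi(t,p) = p$. Two elementary computations drive the estimate: in an orthonormal coframe $\omega$ equals the wedge of the fiber coframe elements, so $\omega$ is a unit $n$-covector and in particular has comass $\leq 1$ (with equality precisely on planes tangent to a slice); and, since $\pi^*\mathrm{vol}_{g_N}$ is closed while $\mathrm{vol}_M = \psi\rho^n\, dt\wedge \pi^*\mathrm{vol}_{g_N}$, one gets $d\omega = n\,\frac{\rho'(t)}{\rho(t)\psi(t)}\,\mathrm{vol}_M = -n\kappa(t)\,\mathrm{vol}_M$. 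Consequently, for a bounded $\Omega \in \mathcal{C}(M)$ with $\Omega \subset E_{t_1}$ and $\cH^{n+1}(\Omega) = v$, the Gauss--Green formula and the comass bound give
\[
\cH^n(\partial^*\Omega) \ \geq\ \Big|\int_{\partial^*\Omega}\omega\Big| \ =\ \Big|\int_\Omega d\omega\Big| \ =\ n\int_\Omega \kappa(t)\, d\cH^{n+1} \ \geq\ n\,\kappa(t_1)\, v ,
\]
where the last step uses that $\kappa$ is nondecreasing, hence $\geq \kappa(t_1)$ on $E_{t_1}$. A routine truncation (replacing $\Omega$ by $\Omega \cap B_{R_j}(p_0)$ for a fixed $p_0$ and a sequence $R_j \to \infty$ with $\cH^n(\Omega\cap\partial B_{R_j}(p_0)) \to 0$, which exists by the coarea formula since $\cH^{n+1}(\Omega) < \infty$) removes the boundedness assumption, so $\mathscr{I}_{E_{t_1}}(v) \geq n\,\kappa(t_1)\, v$ for every $v \in (0, \cH^{n+1}(E_{t_1}))$.

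Applying Proposition~\ref{prop:noncompact_piece} with $E = E_{t_1}$ and $\eta(v) = n\,\kappa(t_1)\, v$ (which is smooth, positive since $\kappa(t_1) > 0$, and vanishes as $v \to 0$) yields $\mathscr{D}(M,g) \geq \liminf_{v\to 0}\eta'(v) = n\,\kappa(t_1)$. Since this holds for every $t_1 > A$ and $\kappa(t_1) \nearrow L$, taking the supremum over $t_1$ gives $\mathscr{D}(M,g) \geq nL = -n\lim_{t\to\infty}\frac{\rho'(t)}{\rho(t)\psi(t)}$; when $L = +\infty$ the same reasoning gives $\mathscr{D}(M,g) = +\infty$. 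The two form computations are routine, so the only points that need care are: verifying that the far-out end $E_{t_1}$ genuinely meets the hypotheses of Proposition~\ref{prop:noncompact_piece} (precompactness of its complement, which uses the one-ended warped-product structure outside $K$), and the passage from bounded to arbitrary competitors in the lower bound for $\mathscr{I}_{E_{t_1}}$. The conceptual point is that running the calibration argument where $\kappa$ is close to its supremum $L$ upgrades the naive constant $\inf_{E_{t_1}}\kappa = \kappa(t_1)$ into the sharp value $L$, which is why the statement features a limit rather than a pointwise quantity.
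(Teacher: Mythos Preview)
Your argument is correct and reaches the same conclusion as the paper, but the route is organized differently. Both proofs use the same calibration form $\omega=\rho^n\pi^*\mathrm{vol}_{g_N}$ with $d\omega=n\frac{\rho'}{\rho\psi}\mathrm{vol}_M$ and both finish by invoking Proposition~\ref{prop:noncompact_piece}. The paper works on a \emph{single} end $E=[A+1,\infty)\times N$ and first proves a symmetrization result: for any competitor $\Omega\subset E$ with $\cH^{n+1}(\Omega)=v$, the half-cylinder $O=[\tau,\infty)\times N$ of the same volume satisfies $\cH^n(\partial O)\le\cH^n(\partial^*\Omega)$ (this uses Stokes on $[O_t]-[\Omega_t]$ together with the monotonicity of $\rho'/(\rho\psi)$ to compare the two integrals of $d\omega$). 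Thus $\mathscr I_E(v)=\cH^n(\partial O)=:\eta(v)$ exactly, and one computes $\eta'(v)=-n\frac{\rho'}{\rho\psi}\big|_{t(v)}\to nL$ as $v\to 0$, then applies Proposition~\ref{prop:noncompact_piece} once. You instead take a \emph{family} of ends $E_{t_1}$, use only the crude comass bound $\cH^n(\partial^*\Omega)\ge\big|\int_\Omega d\omega\big|\ge n\kappa(t_1)\,v$ to obtain the linear lower bound $\mathscr I_{E_{t_1}}(v)\ge n\kappa(t_1)v$, apply Proposition~\ref{prop:noncompact_piece} to each $E_{t_1}$ to get $\mathscr D(M,g)\ge n\kappa(t_1)$, and then let $t_1\to\infty$. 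Your approach is a bit more elementary since it sidesteps the symmetrization step and the auxiliary limit $\Omega_t\to\Omega$; the paper's approach yields the additional information that the half-cylinders are the actual isoperimetric regions in the end, which is not needed for the stated conclusion but is of independent interest. The points you flag as needing care (precompactness of $M\setminus E_{t_1}$, and the truncation to reduce to bounded competitors) are genuine but routine, and you have handled them correctly.
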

\begin{proof}
    In the provided coordinates, we let $E = [A+1, \infty) \times N$.

    Take any $v \in (0, \cH^{n+1}(E))$. Let $\Omega \in \mathcal{C}(M)$ with $\Omega \subset E$ satisfy (by \cite[Corollary 2.2]{NR})
    \[ \cH^{n+1}(\Omega) = v\ \ \text{and}\ \ \cH^n(\partial^*\Omega) = \mathscr{I}_E(v). \]

    Let $\Omega_t = \Omega \cup ([t , \infty) \times N)$. Since $\Omega$ has finite perimeter and $M$ has finite volume, we have
    \[ \lim_{t \to \infty} \cH^n(\partial^*\Omega_t) = \cH^n(\partial^*\Omega) \ \ \text{and}\ \ \lim_{t \to \infty} \cH^{n+1}(\Omega_t) = \cH^{n+1}(\Omega_t). \]

    Let $O_t$ be the subset of the form $[\tau_t, \infty) \times N$ so that $\cH^{n+1}(O_t) = \cH^{n+1}(\Omega_t)$.

    Let
    \[ \omega = \rho^n\mathrm{vol}_N. \]
    Then
    \[ d\omega = n\rho^{n-1}\rho'dt \wedge \mathrm{vol}_N = n\frac{\rho'}{\rho\psi}\mathrm{vol}_M. \]
    By Stokes' theorem, we have
    \begin{align*}
        \cH^n(\partial O_t) - \cH^n(\partial^*\Omega_t)
        & \leq \int_{[\partial O_t]} \omega - \int_{[\partial^*\Omega_t]} \omega\\
        & = \int_{[O_t] - [\Omega_t]} d\omega\\
        & = \int_{O_t \setminus \Omega_t}n\frac{\rho'}{\rho\psi} -\int_{\Omega_t \setminus O_t} n\frac{\rho'}{\rho\psi}
    \end{align*}
    Since $O_t$ has the same volume as $\Omega_t$, we have $\cH^{n+1}(O_t \setminus \Omega_t) = \cH^{n+1}(\Omega_t \setminus O_t)$. Moreover, by assumption,
    \[ n\frac{\rho'}{\rho\psi}\Big|_{O_t\setminus \Omega_t} \leq n\frac{\rho'}{\rho\psi}\Big|_{\Omega_t\setminus O_t}. \]
    Hence, we have
    \[ \cH^n(\partial O_t) \leq \cH^n(\partial^*\Omega_t). \]
    
    Taking $t \to \infty$, we have
    \[ \cH^n(\partial O) \leq \cH^n(\partial^*\Omega), \]
    where $O$ is the subset of the form $[\tau, \infty) \times N$ with $\cH^{n+1}(O) = \cH^{n+1}(\Omega) = v$, so
    \[ \mathscr{I}_E(v) = \cH^n(\partial O). \]
    In other words, the areas and volumes of the subsets of the form $[\tau, \infty) \times N$ provide a lower bound for $\mathscr{I}_E$.

    Let $a(t) = \cH^n(\{t\} \times N)$ and $v(t) = \cH^{n+1}([t, \infty) \times N)$. If $t(v)$ denotes the inverse function of $v(t)$, we have
    \[ \eta(v) = a(t(v)) \]
    is a lower bound for $\mathscr{I}_E$. By the coarea formula and the formula for the derivative of an inverse function, we have
    \[ \eta'(v) = \frac{n\rho^{n-1}\rho'}{-\rho^n\psi} = -n\frac{\rho'}{\rho\psi}. \]
    The conclusion follows by Proposition \ref{prop:noncompact_piece}.
\end{proof}

We now compute lower bounds for $\mathscr{D}$ for a few examples.

\begin{example}[Finite volume hyperbolic manifolds]
    Let $(M, g)$ be a complete hyperbolic manifold with finite volume. By the structure theorem for these manifolds (see \cite[Chapter 4.5]{thurston}), there is a compact set $K \subset M$, a constant $A > 0$, and a closed Riemannian manifold $(N, g_N)$ (a union of flat tori and flat Klein bottles) so that $M \setminus K$ is isometric to
    \[ ((A, \infty) \times N, t^{-2}dt^2 \oplus t^{-2}g_N). \]
    In this case, $\psi(t) = \rho(t) = t^{-1}$. We see that
    \[ \rho' = -t^{-2} < 0\ \ \text{and}\ \ \frac{\rho'}{\rho\psi} = -1\ \ \text{is nonincreasing}. \]
    By Proposition \ref{prop:compute}, we have
    \[ \mathscr{D}(M, g) \geq n.\]
    By Theorem \ref{thm:cmc}, we have Corollary \ref{cor:finitevolhyp}.
\end{example}

\begin{example}[$\mathscr{D} = +\infty$]\label{ex:rot_sym}
    Let $(N, g_N)$ be a closed Riemannian manifold. Let $(M, g)$ be a Riemannian manifold isometric to
    \[ ((A, \infty) \times N, dt^2 \oplus e^{-2t^{1+\alpha}}g_N) \]
    outside a compact set, where $\alpha > 0$.
    Completeness follows from construction, and the fact that $M$ has finite volume follows because
    \[ \int_1^{\infty} e^{-nt^{1+\alpha}}dt \leq \int_1^{\infty} e^{-nt}dt = \frac{-1}{n}e^{-nt}\Big|_1^{\infty} = \frac{e^{-n}}{n}. \]
    In this case, we have $\psi(t) = 1$ and $\rho(t) = e^{-t^{1+\alpha}}$. We compute
    \[ \rho' = -(1+\alpha)t^\alpha e^{-t^{1+\alpha}} < 0 \]
    and
    \[ \frac{\rho'}{\rho\psi} = -(1+\alpha)t^\alpha \ \ \text{is nonincreasing}. \]
    By Proposition \ref{prop:compute}, we have
    \[ \mathscr{D}(M, g) \geq \lim_{t \to \infty} n(1+\alpha)t^\alpha = + \infty. \]
    By Theorem \ref{thm:cmc}, we can find a smooth complete almost embedded finite area $c$-CMC hypersurface in $M$ for any $0 < c < \infty$.
\end{example}

% REFERENCES
\bibliographystyle{amsalpha}
\bibliography{bib}

\end{document}